\newcommand{\ooo}[1]{{\color{red} \sf $\clubsuit\clubsuit\clubsuit$ [#1]}}
\def\@citecolor{blue}
\def\@urlcolor{blue}
\def\@linkcolor{blue}
\def\theequation{\thesection.\@arabic \c@equation}
\def\@citecolor{blue}
\def\@urlcolor{blue}
\def\@linkcolor{blue}
\def\theenumi{\@roman\c@enumi}
\theoremstyle{definition}
\newtheorem{Theorem}{Theorem}
\newtheorem{theorem}{Theorem}[section]
\newtheorem{lemma}[theorem]{Lemma}
\newtheorem{corollary}[theorem]{Corollary}
\newtheorem{proposition}[theorem]{Proposition}
\newtheorem*{claim*}{Claim}
\theoremstyle{definition}
\newtheorem{remark}[theorem]{Remark}
\newtheorem{remarks}[theorem]{Remarks}
\newtheorem{example}[theorem]{Example}
\newtheorem{definition}[theorem]{Definition}
\newcommand{\m}{\mathfrak{m}}
\def\NZQ{\mathbb}               
\def\NN{{\NZQ N}}
\def\ZZ{{\NZQ Z}}
\def\frk{\mathfrak}               
\def\aa{{\frk a}}
\def\mm{{\frk m}}
\def\nn{{\frk n}}
\def\pp{{\frk p}}
 \newcommand{\ul}[1]{
\underline{#1}}
\newcommand{\ov}[1]{\overline{{#1}}}
\def\opn#1#2{\def#1{\operatorname{#2}}} 
\opn\chara{char}
\opn\length{\ell}
\opn\projdim{proj\,dim}
\opn\depth{depth}
\opn\reg{reg}
\opn\lreg{lreg}
\opn\sat{^{\hspace{-.05cm}\rm sat}}
\opn\lex{^{\hspace{-.05cm}lex}}
\opn\Ker{Ker}
\opn\Coker{Coker}
\opn\Im{Im}
\opn\Hom{Hom}
\opn\Tor{Tor}
\opn\Ext{Ext}
\opn\End{End}
\opn\Aut{Aut}
\opn\id{id}
\opn\GL{GL}
\opn\Ass{Ass}
\opn\Supp{Supp}
\opn\Spec{Spec}
\opn\Min{Min}
\renewcommand{\leq}{\leqslant}
\renewcommand{\geq}{\geqslant}
\let\ov\overline
\let\lra\longrightarrow
\opn\Gin{Gin}
\opn\Hilb{Hilb}
\opn\ini{in}
\opn\End{end}
\opn\Ann{Ann}
\opn\height{ht}
\DeclareMathOperator{\Edepth}{E-depth}
\DeclareMathOperator{\rev}{rev}
\DeclareMathOperator{\gin}{gin}
\title{On the notion of sequentially Cohen-Macaulay modules}
\author[G. Caviglia]{Giulio Caviglia}
\address{Giu\-lio Ca\-vi\-glia - Department of Mathematics -  Purdue University - 150 N. University Street, West Lafayette - 
  IN 47907-2067 - USA} 
\email{gcavigli@math.purdue.edu}
\author[A. De Stefani]{Alessandro De Stefani}
\address{Alessandro De Stefani - Dipartimento di Matematica - Universit{\`a} di Genova - Via Dodecaneso 35 - 16146 Genova - Italy}
\email{destefani@dima.unige.it}
\author[E. Sbarra]{Enrico Sbarra}
\address{Enrico Sbarra - Dipartimento di Matematica - Universit\`a degli Studi di Pisa - Largo Bruno Pontecorvo 5 - 56127 Pisa - Italy}
\email{enrico.sbarra@unipi.it}
\author[F. Strazzanti]{Francesco Strazzanti}
\address{Francesco Strazzanti - Dipartimento di Matematica ``Giuseppe Peano'' - Universit\`a degli Studi di Torino - Via Carlo Alberto 10 - 10123 Torino - Italy}
\email{francesco.strazzanti@gmail.com}
\thanks{The first author was partially supported by a grant from the Simons Foundation (41000748, G.C.). The second author was partially supported by the PRIN 2020 project 2020355B8Y ``Squarefree Gr{\"o}bner degenerations, special varieties and related topics''. The third author was partially supported by the PRIN project 2017SSNZAW004 ``Moduli Theory and Birational Classification"  of Italian MIUR}
\date{\today}
\begin{document}

\begin{abstract}
In this survey paper we first present the main properties of sequentially Cohen-Macaulay modules. Some  basic examples are provided to help the reader with quickly getting acquainted with this topic.
We then discuss two generalizations of the notion of sequential Cohen-Macaulayness which are inspired by a theorem of J\"urgen Herzog and the third author. 
\end{abstract}

\maketitle

\section*{Introduction} 
This survey note, which is dedicated to the work of J\"urgen Herzog on the topic, cannot possibly be complete: the notion of sequentially Cohen-Macaulay module has been central in many papers in the literature from the late 90's, starting maybe with \cite{D96}. On the other hand, in the late 90's Herzog's research activity was feverish as he counted very many visitors and collaborators. At the same time the distribution of preprints in the form of postscript files over the internet expedited the dissemination of mathematical ideas. We thus must apologize in advance that our reference list is doomed to be incomplete.

It is in 1997 that Herzog, together with who would become his top co-author, Takayuki Hibi, published a paper on simplicial complexes \cite{HH97}, immediately followed by another series of papers of the two authors together with Annetta Aramova \cite{AHH97, AHH97-2, AHH98}, where numerical problems about simplicial complexes were addressed through the study of Hilbert functions, Gr\"obner bases techniques and generic initial ideals, see also \cite{ADH98, AH97, DH98}. In 1999, another influential paper authored by Herzog and Hibi, is published, \cite{HH99}: they introduce and study a new class of ideals, called componentwise linear. 
Componentwise linear Stanley-Reisner ideals $I_\Delta$ are characterized as those for which the pure $i$-th skeleton of the Alexander dual of $\Delta$ is Cohen-Macaulay for every $i$.
Thanks to \cite{D96}, this means that $I_\Delta$  is componentwise linear if and only its  Alexander dual is sequentially Cohen-Macaulay, see also \cite{HRW99}.
In this way, the authors were also able to generalize a well-known result of Eagon and Reiner, which says that the Stanley-Reisner ideal of a simplicial complex has a linear resolution if and only if its Alexander dual is Cohen-Macaulay. 
All the ideas behind the proofs of these facts led also to another fundamental result, which is the main theorem of \cite{HS01} and provides a somewhat unexpected characterization of graded sequentially Cohen-Macaulay modules over a polynomial ring $R$ in terms of the Hilbert function of the local cohomology modules: 

\begin{Theorem} \label{HS Intro}
Let $M$ be a finitely generated graded $R$-module, and let $M \cong F/U$ a free graded presentation of $M$. Then, $F/U$ is sequentially Cohen-Macaulay  if and only if $\Hilb(H^i_{\mm}(F/U))=\Hilb(H^i_{\mm}(F/\Gin(U)))$ for all $i \geq 0$. 
\end{Theorem}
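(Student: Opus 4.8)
The plan is to study how the Hilbert functions $\Hilb(H^i_{\mm}(-))$ behave along the Gröbner degeneration that connects $F/U$ to $F/\Gin(U)$. Two features of this degeneration hold for every $U$ and form the backbone of the argument: it is a flat one-parameter family whose fibres all share the Hilbert function of $F/U$, and its special fibre $F/\Gin(U)$ is sequentially Cohen-Macaulay, because $\Gin(U)$ is Borel-fixed and quotients by strongly stable submodules are sequentially Cohen-Macaulay (over a field of characteristic zero, with the reverse lexicographic order as the natural choice). Since $\Gin(U)=\ini_<(gU)$ for a generic $g\in\GL$ and $F/gU\cong F/U$ has the same local cohomology as $F/U$, it is enough to track $\Hilb(H^i_{\mm}(-))$ as $gU$ degenerates to its initial submodule.

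First I would establish the semicontinuity estimate
\[
\dim_k H^i_{\mm}(F/U)_j \;\le\; \dim_k H^i_{\mm}(F/\Gin(U))_j \qquad \text{for all } i \text{ and } j,
\]
which expresses that the Hilbert function of local cohomology is upper semicontinuous in a flat family and can only jump up on the special fibre. This already identifies $F/\Gin(U)$ as the largest, extremal member of its degeneration class, so the asserted equality says exactly that $F/U$ attains this maximum in every cohomological degree.

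For the direction $(\Rightarrow)$ I would exploit the dimension filtration $0=D_{-1}\subset D_0\subset\cdots\subset D_d=F/U$, where $D_i$ is the largest submodule of dimension at most $i$. When $F/U$ is sequentially Cohen-Macaulay each quotient $D_i/D_{i-1}$ is zero or Cohen-Macaulay of dimension $i$, and the gaps in dimension make the long exact sequences collapse to isomorphisms $H^i_{\mm}(F/U)\cong H^i_{\mm}(D_i/D_{i-1})$; for such a Cohen-Macaulay quotient the Hilbert function of its top local cohomology is determined by $\Hilb(D_i/D_{i-1})$ through the Grothendieck--Serre formula $\HF_N(j)-P_N(j)=\sum_i(-1)^i\dim_k H^i_{\mm}(N)_j$ applied to $N=D_i/D_{i-1}$. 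It then remains to match these quotient Hilbert functions with those coming from the (also sequentially Cohen-Macaulay) module $F/\Gin(U)$. For $(\Leftarrow)$ I would read the hypothesis through graded local duality, $H^i_{\mm}(M)^{\vee}\cong\Ext^{n-i}_R(M,\omega_R)$ with $n=\dim R$, and use the characterization that $M$ is sequentially Cohen-Macaulay exactly when each $\Ext^{n-i}_R(M,\omega_R)$ is zero or Cohen-Macaulay of dimension $i$: since $F/\Gin(U)$ has this property, equality of all the local cohomology Hilbert functions, together with the fact that the estimate above is sharp only on the sequentially Cohen-Macaulay locus, transfers it to $F/U$.

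The hard part is upgrading the inequality to an equality under the sequentially Cohen-Macaulay hypothesis, and conversely extracting the filtration structure from equality. The obstacle is that the individual Hilbert functions $\Hilb(H^i_{\mm}(M))$ are \emph{not} determined by $\Hilb(M)$: only their alternating sum is, via Grothendieck--Serre, so a naive sign manipulation cannot separate the cohomological degrees. One must instead use the finer properties of the reverse lexicographic $\Gin$ --- in particular that it preserves depth and regularity and is compatible with the dimension filtration, so that the Gröbner degeneration restricts to a flat degeneration of each Cohen-Macaulay quotient $D_i/D_{i-1}$. This compatibility is what forces the inequalities to be equalities term by term and supplies the matching of quotient Hilbert functions needed above.
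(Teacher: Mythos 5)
Your forward direction is essentially sound and close in spirit to the paper's: it uses the isomorphisms $H^j_\mm(F/U)\cong H^j_\mm(D_j/D_{j-1})$ (Proposition \ref{sullecoom}) and reduces everything to matching the Hilbert series of the dimension-filtration quotients of $F/U$ with those of $F/\Gin(U)$. But be aware that this matching is itself a nontrivial statement: in general only the inclusion $\Gin(U^{\langle j \rangle})\subseteq \Gin(U)^{\langle j \rangle}$ holds (Lemma \ref{Afshin} (1)), and the equality $\Hilb(\delta_j/\delta_{j-1})=\Hilb(\gamma_j/\gamma_{j-1})$ is proved in Lemma \ref{lemma SbSt} (4) only \emph{under} the sequentially Cohen-Macaulay hypothesis, via a depth argument on $F/\Gin(U^{\langle j\rangle})$. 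Your claim that the Gr\"obner degeneration ``restricts to a flat degeneration of each Cohen-Macaulay quotient $D_i/D_{i-1}$'' as a general compatibility property of the revlex $\Gin$ is not a general fact; as used, it is an unproved input (acceptable as a target in the forward direction, but not available as a tool).

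The genuine gap is in the converse. You write that equality of the Hilbert functions, ``together with the fact that the estimate above is sharp only on the sequentially Cohen-Macaulay locus,'' transfers the property to $F/U$. That ``fact'' \emph{is} the implication you are trying to prove: the content of the $(\Leftarrow)$ direction is precisely that equality in every cohomological degree forces sequential Cohen-Macaulayness, so invoking it makes the argument circular. Concretely, knowing $\Hilb(\Ext^{n-i}_R(F/U,\omega_R))=\Hilb(\Ext^{n-i}_R(F/\Gin(U),\omega_R))$ and that the right-hand modules are zero or Cohen-Macaulay of dimension $i$ does not transfer Cohen-Macaulayness to the left-hand modules, because a Hilbert function does not detect depth. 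The paper's proof of this direction (Theorem \ref{partiallySCM}, $(2)\Rightarrow(1)$) requires a genuinely different mechanism: an induction on dimension using a general linear form $x$, the identity $\Gin(g_n(U))=\Gin(U)_{[n-1]}$ of \eqref{215}, and a comparison of two chains of inequalities of Hilbert series whose forced equality makes the connecting modules $B^{(j)}$ and $C^{(j)}$ in the local cohomology exact sequences vanish; this shows $x$ is regular on the relevant deficiency modules $\Ext^{n-j}_R(F/U,\omega_R)$, and only then can one apply Proposition \ref{Ale} (which itself rests on Peskine's Theorem \ref{peskine} and Lemma \ref{lemma finite length}) to lift sequential Cohen-Macaulayness from the hyperplane section $F/(U+xF)$ back to $F/U$. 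None of this is present in, or replaceable by, an appeal to sharpness of the semicontinuity estimate.
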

\noindent
Here $\Gin(U)$ denotes the generic initial module of $U$ with respect to the degree reverse lexicographic order.

\smallskip 

This paper is organized as follows. In the first section we introduce the definition of sequentially Cohen-Macaulay modules according to Stanley \cite{St96}, and discuss some basic properties and examples. In Section 2 we present some of the main characterizations, or equivalent definitions, of sequential Cohen-Macaulayness, by recalling Schenzel's results on the dimension filtration of a module, cf. Propositions \ref{sche02}, \ref{CMFvsDF} and Theorem \ref{thm sCM and CMf},  and Peskine's characterization in terms of deficiency modules, cf. Theorem \ref{peskine}. In Section 3, we recollect the definition of partial sequential Cohen-Macaulayness introduced by the third and the fourth author in \cite{SS17}, by requiring that only the queue of the dimension filtration is a Cohen-Macaulay filtration, see Definition \ref{Def i-sCM} and some basic properties in the graded case in Lemmata \ref{Afshin} and \ref{lemma SbSt}. The next two results, Lemma \ref{lemma finite length} and Proposition \ref{Ale}, are dedicated to filling a gap in the proof of a fundamental lemma in \cite{SS17}, and we present the first of our generalizations of Theorem \ref{HS Intro} in Theorem \ref{partiallySCM}. We start Section 4 by recalling the definition of E-depth, as introduced by the first and second author in \cite{CD21}, see Definition \ref{Def E-depth}. E-depth measures how much depth the deficiency modules of a finitely generated standard graded module $M$ have altogether, and sequential Cohen-Macaulayness can be detected by E-depth as observed in Remark \ref{Remark E-depth}. Some interesting homological properties of E-depth, especially in connection with strictly filter regular elements, are shown in Proposition \ref{prop Edepth}. In Definition \ref{def partial gin} we introduce the other crucial notion for the following, what we might call partial generic initial ideal, making use of a special partial revlex order. Finally, by means of Proposition \ref{prop gin and grading}, we prove in Theorem \ref{main thm Edepth} the main result of this section, which can be seen as another generalization of Theorem \ref{HS Intro}.

\smallskip

To J\"urgen Herzog, a bright example of mathematician and teacher.

\section{Sequentially Cohen-Macaulay modules}

Throughout the paper, let $(R,\m,k)$ denote either a Noetherian local ring with maximal ideal $\m$ and residue field $k=R/\m$, or a standard graded $k$-algebra $R=\bigoplus_{i \geq 0} R_i$, with $R_0=k$ and $\m = \bigoplus_{i>0} R_i$. In the second case, every $R$-module we consider will be a graded $R$-module, and homomorphisms will be graded of degree zero. 

One of the features that makes the Cohen-Macaulay property significant is its characterization in terms of the vanishing and non-vanishing of local cohomology: for a $d$-dimensional finitely generated module $M$ with $t=\depth M$, it holds that $H^i_\mm(M)=0$ for all $i<t$ and $i>d$;  also,  $H^t_\mm(M) \neq 0$ and $ H^d_\mm(M)\neq 0$. These results are originally due to Grothendieck, cf. \cite{BH93}, Theorem 3.5.8, Corollary 3.5.9, Corollary 3.5.11.a) and b). As a consequence, $M$ is a Cohen-Macaulay module if and only if $H^i_\mm(M)=0$ for all $i\ne d$.

Let $\dim(R)=n$. When $R$ is Cohen-Macaulay and has a canonical module $\omega_R$, by duality,  the above conditions can be checked on the Matlis dual of the local cohomology modules, i.e., on the modules $\Ext^{n-i}_R(M,\omega_R)$. With the above notation, we then have that $\Ext_R^{n-t}(M,\omega_R)$ and $\Ext_R^{n-d}(M,\omega_R)$ are non-zero, and furthermore that $\Ext^i_R(M,\omega_R)=0$ for all $i<n-d$ and all $i>n-t$. Moreover, $M$ is Cohen-Macaulay if and only if $\Ext^{n-i}_R(M,\omega_R) = 0$ for all $i \ne d$. Often in the literature the modules $\Ext^{n-i}_R(M,\omega_R)$,\,  $i=0,\ldots,n$,\, are called the {\em deficiency modules of $M$}, since they also measure how far a module is from being Cohen-Macaulay.

We now introduce the class of sequentially Cohen-Macaulay modules, following \cite{St96}, recalling their main properties, and in the next section we discuss some of its equivalent definitions. Our main references here are \cite{HS01} and \cite{Sch99}. 

\begin{definition}
A finitely generated $R$-module $M$ is called {\em sequentially Cohen-Macaulay} if it admits a filtration of submodules $0=M_0\subsetneq M_1\subsetneq \ldots \subsetneq M_r=M$ such that each quotient of the filtration $M_{i}/M_{i-1}$ is Cohen-Macaulay and $\dim (M_{i}/M_{i-1}) < \dim (M_{i+1}/M_i)$ for all $i$. In this case, we say that the above filtration is a {\em sCM filtration} for $M$.
\end{definition}

\begin{example}\label{example sCM}
  \begin{enumerate}[(1)]
\item    It is clear from the definition that if $M \ne 0$ is a Cohen-Macaulay module, then it is sequentially Cohen-Macaulay. In fact $M=M_1\supsetneq M_0 = 0$.

\item  Any $1$-dimensional module $M$ is sequentially Cohen-Macaulay, for if $M$ is not Cohen-Macaulay one can take $M_1 = H^0_\m(M)$ and $M_2=M$.

\item  A domain  $R$  is  sequentially Cohen-Macaulay  if and only if it is Cohen-Macaulay. In fact, any non-zero submodule of $R$ will have dimension $d=\dim(R)$, and the only possible sCM filtration for $R$ is $0=M_0 \subsetneq M_1=R$.

\item On the other hand, if $M$ has a Cohen-Macaulay submodule $M_1$ such that $M/M_1$ is sequentially Cohen-Macaulay with a filtration $0=M_1/M_1 \subsetneq N_1/M_1  \subsetneq \ldots  \subsetneq N_s/M_1=M/M_1$, and $\dim M_1 < \dim N_1/M_1$, then $0\subsetneq M_1\subsetneq N_1\subsetneq\ldots\subsetneq  N_s=M$ is a sCM filtration of $M$.
  \end{enumerate}

\end{example}

The notion of sequentially Cohen-Macaulay modules appears for the first time in the literature in \cite{St96} in the graded setting, in connection with the theory of Stanley-Reisner rings {and simplicial complexes. Later on and independently, in \cite{Sch99}, the notion of Cohen-Macaulay filtered modules has been introduced in the local case; in the same paper, it is proven that the two notions coincide.  Since then, sequentially Cohen-Macaulay modules have been extensively studied especially in connection with shellability and graph theory, see for instance
  \cite{VTV08}
  \cite{HTYZ11}
  \cite{AM15},
  \cite{Go16}, 
  \cite{ABG17},
    and the definition has been extended in other directions, see  \cite{CC07}, \cite{R17}. Section 3 and 4 are devoted to two such extensions, due to the third and fourth author and the first two authors respectively.

      \begin{example}\label{nuovo} Let $M$ be a sequentially Cohen-Macaulay module with sCM filtration $0 = M_0 \subsetneq M_1 \subsetneq \dots \subsetneq M_r=M$ and let $d_i=\dim(M_i/M_{i-1})$ for all $i$.
        \begin{enumerate}[(1)]
        \item Let  $N$ be another sequentially Cohen-Macaulay; then $M \oplus N$ is sequentially Cohen-Macaulay. To see this, let $0 =N_0 \subsetneq N_1 \subsetneq \ldots \subsetneq N_s=N$ be a sCM filtration of $N$ and let $\delta_i= \dim(N_i/N_{i-1})$. For convenience, also let $d_0=\delta_0=-1$. Then,  a filtration whose terms are of the form $M_i\oplus N_j$, where either $d_i=\max\{d_a \mid d_a \leq \delta_j\}$ or $\delta_j=\max\{\delta_a \mid \delta_a \leq d_i\}$, is a sCM  filtration of $M_r\oplus N_s = M\oplus N$.
          
  \item The completion $\widehat{M}$ of $M$ at the maximal ideal $\m$ is a sequentially Cohen-Macaulay $\widehat{R}$-module. To this end, let $M'_i= M_i \otimes_R \widehat{R}$, and by flatness of $\widehat{R}$ we have that  $0 = M'_0 \subsetneq M'_1 \subsetneq \ldots \subsetneq M'_r$ is a sCM filtration of $M \otimes_R \widehat{R} \cong \widehat{M}$.
  \item It can be proven in general  that being sequentially Cohen-Macaulay is preserved by faithfully flat base changes; with some extra effort one can derive that $M$ is a sequentially Cohen-Macaulay $R$-module if and only if $M[|x|]$ is sequentially Cohen-Macaulay as an $R[|x|]$-module, cf \cite[Theorem 6.2]{Sch99}.  
        \end{enumerate}
      \end{example}

      \begin{remark}\label{perdopo} In the above example Part (1),  the converse also holds, cf. \cite[Proposition 4.5]{CN03}, \cite[Proposition 3.2]{TPDA18} or Corollary \ref{sumviceversa}.

        \smallskip
        \noindent
 In Part (2) of the same example,  the vice versa does not hold in general, see for instance \cite[Example 6.1]{Sch99}. It is true though when $R$ is a homomorphic image of a Cohen-Macaulay ring, see  \cite[Corollaries 2.8]{GHS10} and \cite[Corollaries 2.9]{GHS10}, where it is proved that, in such a case $M$ is a sequentially Cohen-Macaulay $R$-module if and only if $\widehat{M}$ is a sequentially Cohen-Macaulay $\widehat{R}$-module.        
      \end{remark}
      
\begin{proposition}\label{sullecoom}
  Let $M$ be a sequentially Cohen-Macaulay $R$-module with sCM filtration $0=M_0 \subsetneq M_1 \subsetneq \ldots \subsetneq M_r=M$. Also let $d_i=\dim(M_{i}/M_{i-1})$, for all  $i \in \{1,\ldots,r\}$; then:

  \begin{enumerate}[(1)]
  \item for all $j \in \ZZ$,  we have that\,\,\,
    $H^j_\m(M) \ne 0$ if and only if  $j \in \{d_1,\ldots,d_r\}$, and  $$H^{d_i}_\mm(M)\cong H^{d_i}_\mm(M_i) \cong H^{d_i}_\mm(M_i/M_{i-1});$$
  \item for all $i \in \{1,\ldots,r\}$ and $j < i$, the modules $M_i$ and $M_i/M_j$ are sequentially Cohen-Macaulay.
  \end{enumerate}

\end{proposition}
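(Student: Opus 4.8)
The plan is to prove Part (1) by induction on $i$ along the filtration, reading off the local cohomology from the long exact sequences attached to the short exact sequences $0\to M_{i-1}\to M_i\to M_i/M_{i-1}\to 0$, and then to deduce Part (2) essentially formally from the definition. The one structural fact I would record at the outset is that, by the very definition of an sCM filtration, the dimensions are strictly increasing, $d_1<d_2<\cdots<d_r$; this monotonicity is what makes all the arguments go through.

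For Part (1), I would prove by induction on $i\in\{1,\dots,r\}$ the statement that $H^j_\mm(M_i)\neq 0$ if and only if $j\in\{d_1,\dots,d_i\}$, together with $H^{d_i}_\mm(M_i)\cong H^{d_i}_\mm(M_i/M_{i-1})$. The base case $i=1$ is immediate, since $M_1=M_1/M_0$ is Cohen-Macaulay of dimension $d_1$ and hence has its only nonvanishing local cohomology in degree $d_1$. For the inductive step I would feed the sequence $0\to M_{i-1}\to M_i\to M_i/M_{i-1}\to 0$ into local cohomology. Because $M_i/M_{i-1}$ is Cohen-Macaulay of dimension $d_i$, the terms $H^j_\mm(M_i/M_{i-1})$ vanish for every $j\neq d_i$; combined with the inductive hypothesis that $H^j_\mm(M_{i-1})$ is supported on $\{d_1,\dots,d_{i-1}\}$ and with $d_{i-1}<d_i$, every connecting homomorphism abutting to the degrees near $d_i$ has either zero source or zero target. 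This gives isomorphisms $H^j_\mm(M_i)\cong H^j_\mm(M_{i-1})$ for $j<d_i$, vanishing of $H^j_\mm(M_i)$ for $j>d_i$, and a four-term exact sequence with vanishing outer terms that forces $H^{d_i}_\mm(M_i)\cong H^{d_i}_\mm(M_i/M_{i-1})$.

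To complete Part (1) I still need $H^{d_i}_\mm(M)\cong H^{d_i}_\mm(M_i)$, which I would get by climbing the remainder of the filtration. For each $k$ with $i<k\leq r$, the same long exact sequence for $0\to M_{k-1}\to M_k\to M_k/M_{k-1}\to 0$, using that $M_k/M_{k-1}$ has cohomology concentrated in the single degree $d_k>d_i$ (so that $H^{d_i-1}_\mm$ and $H^{d_i}_\mm$ of the quotient both vanish), yields $H^{d_i}_\mm(M_{k-1})\cong H^{d_i}_\mm(M_k)$. Chaining these isomorphisms from $k=i+1$ up to $k=r$ gives $H^{d_i}_\mm(M_i)\cong H^{d_i}_\mm(M_r)=H^{d_i}_\mm(M)$, as required.

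Part (2) is then formal. The truncated filtration $0=M_0\subsetneq M_1\subsetneq\cdots\subsetneq M_i$ is itself an sCM filtration of $M_i$, with the same Cohen-Macaulay quotients and strictly increasing dimensions, so $M_i$ is sequentially Cohen-Macaulay. For $M_i/M_j$ with $j<i$, I would consider $0=M_j/M_j\subsetneq M_{j+1}/M_j\subsetneq\cdots\subsetneq M_i/M_j$, whose successive quotients are isomorphic to $M_k/M_{k-1}$ by the third isomorphism theorem; these are Cohen-Macaulay of strictly increasing dimensions $d_{j+1}<\cdots<d_i$, so this is an sCM filtration and $M_i/M_j$ is sequentially Cohen-Macaulay. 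I expect the only delicate point in the whole argument to be the careful bookkeeping in the long exact sequences of Part (1) — namely verifying at each degree that the relevant connecting maps vanish — and this rests entirely on the strict monotonicity $d_1<\cdots<d_r$, which ensures that at every stage the newly created cohomology sits in a degree not yet occupied, so that the pieces never interfere.
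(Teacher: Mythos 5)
Your proposal is correct and follows essentially the same route as the paper's proof: induction along the filtration using the long exact sequences in local cohomology attached to $0\to M_{i-1}\to M_i\to M_i/M_{i-1}\to 0$, where the strict inequalities $d_1<\cdots<d_r$ force the relevant connecting maps to vanish, followed by the same formal truncation argument for Part (2). The only cosmetic difference is that you split the argument into an induction up to $M_i$ plus a separate chaining step from $M_i$ to $M_r$, whereas the paper records the isomorphisms $H^j_\m(M_i)\cong H^j_\m(M_{i-1})$ for $j<d_i$ in one pass and reads off both conclusions at the end; the content is identical.
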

\begin{proof}
For Part (1), using the short exact sequences $0 \to M_{i-1} \to M_i \to M_i/M_{i-1} \to 0$ one inductively shows that $\dim(M_i) = d_i$ for all $i \in \{1,\ldots,r\}$, since $d_{i-1}<d_i$. The induced long exact sequences on local cohomology
\[
\cdots\to H^{j-1}_\m(M_i/M_{i-1}) \to H^j_\m(M_{i-1}) \to  H^j_\m(M_i) \to H^j_\m(M_i/M_{i-1}) \to  H^{j+1}_\m(M_{i-1})\to\cdots
\]

\noindent
together with the fact that $M_i/M_{i-1}$ is Cohen-Macaulay of dimension $d_i$, imply  that
$$H^{d_i}_\m(M_i) \cong H^{d_i}_\m(M_i/M_{i-1}),\,\,\, H^j_\m(M_i) \cong H^j_\m(M_{i-1}) \text{\, if } j<d_i,\text{\, and } H^j_\m(M_i)=0 \text{\, otherwise}.$$

\noindent
In particular, $H^j_\m(M) =0$ for all $j>d_r$, and $H^j_\m(M) = H^j_\m(M_i)$ for all $j\leq d_i$.  In conclusion we have shown  that
\[
H^j_\m(M) = \begin{cases} H^{d_i}_\m(M_i) = H^{d_i}_\m(M_i/M_{i-1}) & \text{ if } j=d_i \text{ for some } i, \\ 0 & \text{\,\,otherwise}.
\end{cases} 
\]

Part (2) is clear once we notice that $0 =M_0 \subsetneq M_1 \subsetneq \ldots \subsetneq M_i$ and $0=M_j/M_j \subsetneq M_{j+1}/M_j \subsetneq \ldots \subsetneq M_i/M_j$ are sCM filtrations.
\end{proof}
}

\begin{example}\label{regulare}
Suppose that $x$ is an $M$-regular element and that $M/xM$ is sequentially Cohen-Macaulay; it is not true in general that $M$ is sequentially Cohen-Macaulay, as observed after \cite[Proposition 2.2]{TPDA17} - the statement of \cite[Theorem 4.7]{Sch99} is not correct. As a counterexample one can take a $2$-dimensional not Cohen-Macaulay local domain $R$ of depth 1, which is not sequentially Cohen-Macaulay by Example \ref{example sCM} (3). For instance, take  $R=\mathbb{Q}[|a^4,a^3b,ab^3,b^4|] \cong \mathbb{Q}[|z_1,z_2,z_3,z_4]/(z_2 z_3 - z_1 z_4, z_3^2 - z_2 z_4^2, z_1 z_3^2 - z_2^2 z_4, z_2^3 - z_1^2 z_3)$.  On the contrary,  every $0\neq x\in R$ is regular, $R/(x)$ is a 1-dimensional $R$-module and, therefore, always sequentially Cohen-Macaulay by Example \ref{example sCM} (2).
\end{example}

In the rest of this section, let $S=k[x_1,\ldots,x_n]$, with the standard grading.
Given a monomial ideal $I\subset S$, we let $G(I)$ denote the monomial minimal system of generators of $I$ and $m(I)=\max\{i \mid x_i \ {\rm divides } \ u \ {\rm for \ some } \ u \in G(I)\}$.

An important class of sequentially Cohen-Macaulay modules is given by rings defined by weakly stable ideals.
 
\begin{definition}
A monomial ideal $I\subseteq S$ is said to be {\it weakly stable} if for all monomials $u \in I$ and all integers $i,j$ with $1 \leq j < i \leq n$, there exists $t \in \mathbb{N}$ such that $ x_j^t u/x_i^\ell \in I$, where $\ell$ is the largest integer such that $x_i^\ell$ divides $u$.
\end{definition}

Observe that the condition of the previous definition is verified if and only if it is verified for all $u\in G(I)$.
In the  literature weakly stable ideals are also called ideals of Borel type, quasi-stable ideals, or ideals of nested type, see \cite{C04}, \cite{HH11}, \cite{BG06}. It is easy too see from the definition that stable, strongly stable, and $p$-Borel ideals are weakly stable. In particular, no matter what the characteristic of the field is, Borel fixed ideals are weakly stable, see also \cite[Theorem 4.2.10]{HH11} and, thus, generic initial ideals are always weakly stable.

Observe  that the saturation $I^{\rm sat}$ of a weakly stable ideal $I$ is $I:\m^{\infty}= I:x_n^\infty$, see \cite[Proposition 4.2.9]{HH11}; thus,   $I^{\rm sat}$ is  again weakly stable and $x_n$ does not divide any $u\in G(I^{\rm sat})$.

\begin{proposition}\label{wsscm}
Let $I$ be a weakly stable ideal of  $S=k[x_1, \dots, x_n]$. Then, $S/I$ is sequentially Cohen-Macaulay. 
\end{proposition}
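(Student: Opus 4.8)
The plan is to induct on the number of variables $n$, proving the stronger statement that the dimension filtration of $S/I$ (the filtration whose $i$-th term $D_i$ is the largest submodule of $S/I$ of dimension at most $i$) is itself an sCM filtration; this implies the conclusion directly from the definition and keeps the argument self-contained. When $n\le 1$ the module $S/I$ has dimension at most one, hence is sequentially Cohen-Macaulay by Example~\ref{example sCM}(2), so I may assume $n\ge 2$ and that the claim holds over polynomial rings in fewer variables. The first move is to peel off the finite-length part. Using the fact recalled above that $I^{\rm sat}=I:\mm^\infty=I:x_n^\infty$, one has $H^0_\mm(S/I)=I^{\rm sat}/I$, a module of dimension $0$, fitting into
\[
0\to I^{\rm sat}/I\to S/I\to S/I^{\rm sat}\to 0 .
\]
If $I^{\rm sat}=S$ then $S/I$ has finite length and is Cohen-Macaulay; otherwise $S/I^{\rm sat}\ne 0$ has positive depth, so every quotient in its dimension filtration has dimension at least $1$. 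Hence, once $S/I^{\rm sat}$ is known to be sequentially Cohen-Macaulay, Example~\ref{example sCM}(4), applied with the Cohen-Macaulay submodule $I^{\rm sat}/I$ of dimension $0$, shows the same for $S/I$. Thus it suffices to treat $I=I^{\rm sat}$ saturated.

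So assume $I=I^{\rm sat}$. By the remarks preceding the statement, $x_n$ divides no element of $G(I)$ and $I:x_n=I$, so $x_n$ is a nonzerodivisor on $M:=S/I$ (equivalently $\mm\notin\Ass(S/I)$ and $x_n$ avoids every associated prime). Set $\overline{S}=k[x_1,\dots,x_{n-1}]$ and let $\overline{I}$ be the image of $I$, so that $\overline{S}/\overline{I}\cong M/x_nM$. Since $x_n$ divides none of the generators of $I$, the ideal $\overline{I}$ has the same monomial generators as $I$ and is again weakly stable, now in $n-1$ variables. By the inductive hypothesis, the dimension filtration of $M/x_nM\cong\overline{S}/\overline{I}$ is an sCM filtration.

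It remains to lift sequential Cohen-Macaulayness from $M/x_nM$ to $M$ along the nonzerodivisor $x_n$. This is precisely the step that fails for a general regular element, as Example~\ref{regulare} shows, and it is here that weak stability is essential. The hard part, which I expect to be the main obstacle, is to prove that the dimension filtration of $M$ is compatible with reduction modulo $x_n$: because $I$ is weakly stable, its dimension filtration is cut out by the monomial saturations $D_i=(I:x_{n-i}^\infty)/I$, and the formation of these colon ideals commutes with setting $x_n=0$. Granting this, $x_n$ is a nonzerodivisor on each quotient $D_i/D_{i-1}$, and $(D_i/D_{i-1})/x_n(D_i/D_{i-1})$ is exactly the dimension $(i-1)$ quotient of the dimension filtration of $\overline{S}/\overline{I}$.

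To finish, each such reduction is Cohen-Macaulay by the inductive hypothesis, and a module on which $x_n$ acts as a nonzerodivisor is Cohen-Macaulay as soon as its quotient by $x_n$ is (depth and dimension each drop by exactly one). Hence every $D_i/D_{i-1}$ is Cohen-Macaulay of dimension $i$, the dimensions along the filtration are strictly increasing, and therefore the dimension filtration of $M$ is an sCM filtration. This completes the induction and, after the reduction of the first paragraph, the proof. The only genuinely delicate point to be filled in is the commutation of the saturations $I:x_{n-i}^\infty$ with reduction modulo $x_n$, which is where the monomial, weakly stable structure does the work.
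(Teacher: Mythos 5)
Your overall skeleton matches the paper's: peel off the finite-length module $I\sat/I$ via the short exact sequence and Example \ref{example sCM} (4), then handle the saturated case by induction. However, your treatment of the saturated case has a genuine gap, and you acknowledge it yourself: the entire content of the argument is deferred to the unproven claim that the dimension filtration of $S/I$ is given by the saturations $D_i=(I:x_{n-i}^\infty)/I$ and that forming these colons commutes with setting $x_n=0$. This is not a routine verification that can be left to the reader: it is exactly the point where weak stability must do the work, since, as you note via Example \ref{regulare}, sequential Cohen-Macaulayness does \emph{not} lift along a regular element in general. Moreover, even the first half of your claim --- that $\delta_i(S/I)=(I:x_{n-i}^\infty)/I$ --- already requires a nontrivial property of Borel-type ideals, namely $I:(x_1,\ldots,x_j)^\infty = I:x_j^\infty$ for every $j$ (a consequence of the associated primes being initial segments, cf.\ \cite[Proposition 4.2.9]{HH11} and Proposition \ref{sche02}), which you also do not establish. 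As written, the central step of the proof is missing.

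The gap is real but easily closed, and the closure is precisely the observation the paper's proof rests on: once $I=I\sat$, no minimal generator of $I$ is divisible by $x_n$, so $I=\overline{I}S$ is \emph{extended} from $\overline{S}=k[x_1,\ldots,x_{n-1}]$, i.e.
\[
S/I \;\cong\; \bigl(\overline{S}/\overline{I}\bigr)\otimes_k k[x_n].
\]
Given this, you do not need the hyperplane-section argument, the identification of the dimension filtration, or any lifting along $x_n$ at all: a sCM filtration of $\overline{S}/\overline{I}$, which exists by your inductive hypothesis in $n-1$ variables, tensored with $k[x_n]$ is visibly a sCM filtration of $S/I$, each quotient becoming Cohen-Macaulay of dimension one higher (this is how the paper passes from $S'/I'$ to $S/I$, phrasing the induction on the invariant $m(I)$ rather than on $n$, cf.\ also Example \ref{nuovo} (3)). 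If you replace your fourth paragraph by this observation, your induction on $n$ goes through and the proof is complete; in its current form, the argument proves the statement only modulo a lemma whose proof is the actual heart of the matter.
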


\begin{proof}
    We prove the statement by induction on $m(I)$. If $m(I)=1$, then $I=(x_1^{r})$ for some positive integer $r$ and $S/I$ is Cohen-Macaulay. Assume now that $S/J$ is sequentially Cohen-Macaulay for every weakly stable ideal $J$ for which $m(J)<m(I)$.

    Let $S'=k[x_1, \dots, x_{m(I)}]$  and  $I'=I\cap S'$. Since $S/I \cong  S'/I' \otimes_{k}k[x_{m(I)+1},\dots, x_n]$ and a sCM filtration of $S'/I'$ is easily extended into one of $S/I$, it is sufficient to prove that $S'/I'$ is sequentially Cohen-Macaulay. Therefore, without loss of generality,  we may assume that $m(I)=n$, and that   $S/I^{\rm sat}$ is sequentially Cohen-Macaulay. Now,  $I^{\rm sat}/I$ is a non-trivial Artinian module, and the first non-zero module of a sCM filtration of $S/I^{\rm sat}$ has positive dimension, see Proposition \ref{sullecoom}. We may thus conclude that $S/I$ is sequentially Cohen-Macaulay, cf. Example \ref{example sCM} (4).
\end{proof}

\begin{remark}\label{bravog}  By \cite[Proposition 3.2]{BG06}, see also \cite[Proposition 4.2.9]{HH11} and \cite[Chapter 4]{C04}) a monomial ideal $I$ is weakly stable if and only if all its associated primes are generated by initial segments of variables, i.e. are of type $(x_1,\ldots,x_i)$ for some $i$. Since being sequentially Cohen-Macaulay is independent of coordinates changes on $S$, the above proposition shows that whenever the associated primes of a monomial ideal $I\subseteq S$ are totally ordered by inclusion, then $S/I$ is sequentially Cohen-Macaulay.
\end{remark}

\begin{example}
  Let $I=(x_1^4,x_1^2 x_2^2,x_1^3 x_3,x_1^2 x_2 x_3,x_1^2 x_2 x_4)\subseteq S=k[x_1,x_2,x_3,x_4]$. It is easy to verify that $I$ is a weakly stable ideal which is not strongly stable, by checking the definition or by computing its associated primes, by using the previous remark; thus, $S/I$ is sequentially Cohen-Macaulay by Proposition \ref{wsscm}.  We can construct an explicit sCM filtration proceeding as in its proof.  We have $m(I)=4$, and  $I^{\rm sat}=I:x_4^{\infty}=(x_1^4,x_1^2 x_2,x_1^3 x_3)$. This means that the first non-zero module of our filtration will be $(x_1^4,x_1^2 x_2,x_1^3 x_3)/I$. Now we consider  $I'=(x_1^4,x_1^2 x_2,x_1^3 x_3)$ as an ideal of $k[x_1,x_2,x_3]$ and compute  its saturation  $I''=I:x_3^{\infty}=(x_1^3,x_1^2 x_2)$.
    Hence, the second non-zero module of the filtration is $(x_1^3,x_1^2 x_2)/I$. Proceeding  in this way we obtain the filtration $    0 \subseteq (x_1^4,x_1^2 x_2,x_1^3 x_3)/I \subseteq (x_1^3,x_1^2 x_2)/I \subseteq (x_1^2)/I \subseteq (1)/I=S/I$, and it is easily seen that it is a sCM filtration of $S/I$.

\end{example}

\begin{example}\label{Alexander}
For the case  $M=R=S/I$, when $I=I_\Delta$ is the Stanley-Reisner ideal of a simplicial complex $\Delta$, there is a beautiful characterization of sequential Cohen-Macaulayness due to Duval, \cite[Theorem 3.3]{D96}.  Given a simplicial complex $\Delta$, let $\Delta(i)$ be the pure $i$-th skeleton of $\Delta$, i.e. the pure subcomplex of $\Delta$ whose facets are the faces of $\Delta$ of dimension $i$. Then, $S/I_{\Delta}$ is sequentially Cohen-Macaulay if and only if $S/I_{\Delta(i)}$ is Cohen-Macaulay for all $i$. 
Another important result in this context is that $I_\Delta$ is componentwise linear if and only if the Stanley-Reisner ring of its Alexander dual $\Delta^*$  is sequentially Cohen-Macaulay, see \cite[Theorem 2.1]{HH99} and \cite[Theorem 8.2.20]{HH11}. Moreover, it is known that if $\Delta$ is (nonpure) shellable, then $S/I_{\Delta}$ is sequentially Cohen-Macaulay, see \cite[Corollary 8.2.19]{HH11}.   
\end{example}

\begin{example}\label{prettybaby} Another class of examples of sequentially Cohen-Macaulay modules is given by pretty clean modules, which have been introduced by Herzog and Popescu in \cite{HP06} in order to characterize shellability of multicomplexes. A {\em pretty clean} module $M$ is a module that admits a {\em pretty clean} filtration, i.e. a prime filtration of $M$ by submodules $0=M_0 \subsetneq M_1 \subsetneq \ldots\subsetneq M_s=M$ such that each quotient $M_i/M_{i-1}$ is isomorphic to $S/\pp_i$, for some prime ideals $\pp_i$, with the following property: if $\pp_i\subsetneq \pp_j$ then $i>j$. For example, if $M=S/I$ with $I$ weakly stable, cf. \cite[Proposition 5.2]{HP06}, or is such that $\Ass(M)$ is a totally ordered set, cf. \cite[Proposition 5.1]{HP06} or Remark \ref{bravog}, then $M$ is pretty clean.  
By \cite[Theorem 4.1]{HP06} pretty clean modules are sequentially Cohen-Macaulay, provided every prime $\pp_i$ appearing in the pretty clean filtration is such that $S/\pp_i$ is Cohen-Macaulay.
  \end{example}
\normalcolor

\section{Characterizations of sequentially Cohen-Macaulay modules}

\noindent
The goal of this section is to present two characterizations of sequentially Cohen-Macaulay modules, due to Schenzel, see Theorem \ref{thm sCM and CMf} and Peskine, see Theorem \ref{peskine}.  As in the previous section, we let $(R,\m,k)$ be either a Noetherian local ring or a standard graded $k$-algebra with maximal homogeneous ideal $\m$;  we let $\dim(R)=n$.  In the second case, modules will be graded and homomorphisms homogeneous of degree 0.

\subsection{ Schenzel's characterization} Our main reference here is \cite{Sch99}.
By convention, the dimension of the zero module is set to be $-1$. Let $M$ be a finitely generated (graded) $R$-module of dimension $d$. Since $R$ is Noetherian,  for all $i=0,\ldots,d$, we can consider the largest (graded) submodule of $M$ of dimension $\leq i$,  and denote it by $\delta_i(M)$. By maximality, in this way one obtains a filtration $\mathcal{M}: 0 \subseteq \delta_0(M) \subseteq \delta_1(M) \subseteq \ldots \subseteq \delta_d(M)=M$, called the {\em dimension filtration of $M$}. Evidently, such a filtration is unique. 

   Given a set $X$ of prime ideals of $R$, we denote by $X_i=\{\pp \in X \,:\, \dim R/\pp=i\}$. Similarly, we define $X_{\leq i}$ and $X_{>i}$. 

\begin{remark} \label{remark submodule dimension and Ass}
Observe that $M$ has a non-zero submodule of dimension $i$ if and only if $\Ass(M)_i \ne \emptyset$. In fact, if $\pp \in \Ass(M)_i$, then $R/\pp$ is a non-zero submodule of $M$ of dimension $i$. Conversely, if $N$ is a non-zero submodule of $M$ of dimension $i$, then there must exist a minimal prime $\pp$ of $N$ such that $\dim(R/\pp) = \dim(N) = i$. Since $N \subseteq M$, we must also have that $\pp \in \Ass(M)$, as desired. In particular, we have that $\delta_i(M) = 0$ if and only if $\Ass(M)_{\leq i} = \emptyset$.
\end{remark}

Notice that $\delta_0(M)=H^0_\mm(M)$; the other modules in $\mathcal{M}$ can be described similarly, with the help of the minimal primary decomposition of $0$ as a submodule of $M$. In fact, let $\Ass(M)=\{\pp_1,\ldots,\pp_m\}$ and, for all  $i=0,\ldots,d$,  consider the set $\Ass(M)_{\leq i}$. We set

\[
\aa_i = \begin{cases} \prod\limits_{\pp\in \Ass(M)_{\leq i}} \pp & \text{ if } \Ass(M)_{\leq i} \ne\emptyset,  \\ R & \text{ otherwise.}
\end{cases}
\]

Now let us consider an irredundant primary decomposition $0 = \bigcap_{j=1}^m N_j$ inside $M$, where each $M/N_j$ is a $\pp_j$-primary module. 

\begin{proposition}[{\cite[Proposition 2.2]{Sch99}}]\label{sche02}
  Let $M$ be a finitely generated $R$-module of dimension $d$. With the above notation, for all $i=0,\ldots,d$ we have that
 \[
 \delta_i(M) = H^0_{\aa_i}(M)=\bigcap\limits_{\{j \ \mid \ \pp_j\in\Ass(M)_{> i}\}} N_j,
 \]
 where we let the intersection over the empty set  be equal to $M$.
\end{proposition}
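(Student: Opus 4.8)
The plan is to prove the two asserted equalities in turn, first identifying $\delta_i(M)$ with the $\aa_i$-torsion submodule $H^0_{\aa_i}(M)$, and then computing that torsion submodule from the given primary decomposition.

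For the equality $\delta_i(M)=H^0_{\aa_i}(M)$ I would argue by two inclusions. For ``$\subseteq$'' note that $\Supp H^0_{\aa_i}(M)\subseteq V(\aa_i)$, and since $\aa_i$ is the product of the primes in $\Ass(M)_{\le i}$ we have $V(\aa_i)=\bigcup_{\pp\in\Ass(M)_{\le i}}V(\pp)$, whose dimension is $\max\{\dim R/\pp:\pp\in\Ass(M)_{\le i}\}\le i$; hence $\dim H^0_{\aa_i}(M)\le i$, and maximality of $\delta_i(M)$ gives $H^0_{\aa_i}(M)\subseteq\delta_i(M)$. For the reverse inclusion, set $N=\delta_i(M)$, which is finitely generated with $\dim N\le i$, so $\Ass(N)\subseteq\Ass(M)_{\le i}$. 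Each $\pp\in\Ass(N)$ is then one of the factors of $\aa_i$, whence $\aa_i\subseteq\bigcap_{\pp\in\Ass(N)}\pp=\sqrt{\Ann N}$; finite generation of $N$ yields $\aa_i^{t}N=0$ for some $t$, i.e. $N\subseteq H^0_{\aa_i}(M)$. (When $\Ass(M)_{\le i}=\emptyset$ we have $\aa_i=R$ and both sides are $0$, consistently with Remark \ref{remark submodule dimension and Ass}.)

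For the second equality I would first record the general fact that, for any ideal $\aa$, the primary decomposition $0=\bigcap_{j}N_j$ gives $H^0_{\aa}(M)=\bigcap_{\{j\,:\,\aa\not\subseteq\pp_j\}}N_j$. The input is the behaviour of torsion on a $\pp_j$-primary quotient: if $\aa\subseteq\pp_j$ then $\aa\subseteq\sqrt{\Ann(M/N_j)}=\pp_j$ and finite generation gives $H^0_{\aa}(M/N_j)=M/N_j$, while if $\aa\not\subseteq\pp_j$ then any $a\in\aa\setminus\pp_j$ is a nonzerodivisor on $M/N_j$ (the zerodivisors being $\bigcup_{\pp\in\Ass(M/N_j)}\pp=\pp_j$), so $a^{s}\bar x=0$ forces $\bar x=0$ and hence $H^0_{\aa}(M/N_j)=0$. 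Using the injection $M\hookrightarrow\bigoplus_j M/N_j$ coming from $\bigcap_j N_j=0$, an element $x$ is $\aa$-torsion in $M$ if and only if its image is $\aa$-torsion in each $M/N_j$; since there are finitely many components one may take a common power, so this holds exactly when $x\in N_j$ for every $j$ with $\aa\not\subseteq\pp_j$.

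It then remains to specialize $\aa=\aa_i$ and match the index sets, i.e. to check that $\aa_i\not\subseteq\pp_j$ precisely when $\pp_j\in\Ass(M)_{>i}$. Since $\pp_j$ is prime and $\aa_i$ is the product over $\Ass(M)_{\le i}$, the containment $\aa_i\subseteq\pp_j$ is equivalent to $\qq\subseteq\pp_j$ for some $\qq\in\Ass(M)_{\le i}$, and any such containment forces $\dim R/\pp_j\le\dim R/\qq\le i$; conversely if $\pp_j\in\Ass(M)_{\le i}$ then $\pp_j$ is itself a factor, so $\aa_i\subseteq\pp_j$. Thus $\{j:\aa_i\not\subseteq\pp_j\}=\{j:\pp_j\in\Ass(M)_{>i}\}$, and the convention that the empty intersection equals $M$ (matching $\aa_d\subseteq\sqrt{\Ann M}$, hence $H^0_{\aa_d}(M)=M$) closes the argument. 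I expect the main obstacle to be this last bookkeeping together with the vanishing $H^0_{\aa}(M/N_j)=0$ in the primary computation: the delicate points are the passage from elementwise torsion to a uniform annihilating power across the finitely many components, and the clean translation of the algebraic condition $\aa_i\not\subseteq\pp_j$ into the dimension condition $\dim R/\pp_j>i$.
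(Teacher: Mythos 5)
Your proof is correct and takes essentially the same route as the paper: the first equality is established by the identical dimension-bound/maximality argument (a power of $\aa_i$ kills $\delta_i(M)$, and $\dim H^0_{\aa_i}(M)\le i$), and your primary-decomposition computation is the paper's argument with the colon modules $N_j:_M\aa_i^\infty$ rephrased through the embedding $M\hookrightarrow\bigoplus_j M/N_j$ and the dichotomy $\aa_i\subseteq\pp_j$ versus $\aa_i\not\subseteq\pp_j$. The translation of $\aa_i\not\subseteq\pp_j$ into $\dim(R/\pp_j)>i$ is also the same dimension-comparison step used in the paper.
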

\begin{proof}
We start with the first equality and assume that $\delta_i(M)=0$. By Remark \ref{remark submodule dimension and Ass} we have that $\Ass(M)_{\leq i} = \emptyset$ and in this case  $\aa_i = R$. Thus,  $H^0_{\aa_i}(M) = H^0_R(M) = 0 = \delta_i(M)$.

\noindent
Now assume that $\delta_i(M) \ne 0$ and observe that,  every associated prime of $\delta_i(M)$ has dimension $\leq i$ and, therefore, $\Ass(\delta_i(M)) \subseteq \Ass(M)_{\leq i}$. Thus, there is a power of $\aa_i$ which annihilates $\delta_i(M)$, and we get that $\delta_i(M) \subseteq H^0_{\aa_i}(M)$. On the other hand, there is a power of $\aa_i$ which annihilates $H^0_{\aa_i}(M)$ and, thus, $\dim(H^0_{\aa_i}(M)) \leq \dim(R/\aa_i) \leq i$. From the maximality of $\delta_i(M)$ it follows that $H^0_{\aa_i}(M) \subseteq \delta_i(M)$, as desired. 

\smallskip
For the second equality, observe that $N_j:_M \pp_j^\infty =M$ and  $N_j:_M x^\infty =N_j$ for any $x \notin \pp_j$, with $j\in \{1,\ldots,m\}$, since $M/N_j$ is $\pp_j$-primary. Also notice that
\[
H^0_{\aa_i}(M) = 0:_M \aa_i^\infty = \left(\bigcap_{j=1}^m N_j\right):_M \aa_i^\infty = \bigcap_{j=1}^m (N_j:_M \aa_i^\infty).
\]
Now, if $\pp_j \in \Ass(M)_{\leq i}$, then 
$
M \supseteq N_j:_M \aa_i^\infty \supseteq N_j :_M \pp_j^\infty =M,
$
forcing equality everywhere. Next, assume that $\pp_j \notin \Ass(M)_{\leq i}$. Then, $\pp \not\subseteq \pp_j$ for every $\pp \in \Ass(M)_{\leq i}$ since, otherwise, we would have $\dim(R/\pp_j) \leq \dim(R/\pp) \leq i$ and, thus, $\pp_j \in \Ass(M)_{\leq i}$. In this case, by choosing  $x \in \aa_i$ and $x \notin \pp_j$, we have that $N_j \subseteq N_j:_M \aa_i^\infty \subseteq N_j:_M x^\infty = N_j,$
and equalities hold. Summing up, we conclude that
\[
H^0_{\aa_i}(M) = \bigcap_{\{j \ \mid \ \pp_j \notin \Ass(M)_{\leq i}\}} N_j = \bigcap_{\{j \ \mid \ \pp_j \in \Ass(M)_{>i}\}} N_j.  \qedhere
\]
\end{proof}

\noindent
Note that this is consistent with our convention that  the intersection over the empty set is equal to $M$; in fact, for $i=d$ we have that $\Ass(M)_{\leq d} =\Ass(M)$ and $\Ass(M)_{>d} = \emptyset$. This agrees with the fact that $H^0_{\aa_d}(M) = H^0_{\sqrt{0}}(M) = M$.

 \begin{proposition}[\cite{Sch99}, Corollary 2.3]\label{assass} Let $M$ be a finitely generated $R$-module of dimension $d$; then, for all $i=0,\ldots,d$,  
    \begin{enumerate}[(1)]
    \item $\Ass(\delta_i(M))=\Ass(M)_{\leq i}$;
    \item $\Ass(M/\delta_i(M))=\Ass(M)_{>i}$;
    \item $\Ass(\delta_i(M)/\delta_{i-1}(M))=\Ass(M)_i$.
    \end{enumerate}
    
\end{proposition}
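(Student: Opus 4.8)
The plan is to deduce all three statements from the primary-decomposition description of $\delta_i(M)$ in Proposition \ref{sche02} together with two standard facts about associated primes: for a short exact sequence $0 \to A \to B \to C \to 0$ one has $\Ass(A) \subseteq \Ass(B) \subseteq \Ass(A) \cup \Ass(C)$, and $\Ass\left(\bigoplus_k L_k\right) = \bigcup_k \Ass(L_k)$. Throughout I write $S_{\le i} = \{j \mid \pp_j \in \Ass(M)_{\le i}\}$ and $S_{>i} = \{j \mid \pp_j \in \Ass(M)_{>i}\}$, and I use that each $M/N_j$ is $\pp_j$-primary, so that $\Ass(M/N_j) = \{\pp_j\}$. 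Consequently $\{\pp_j \mid j \in S_{>i}\} = \Ass(M)_{>i}$, and similarly for the other index sets.

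First I would establish the ``$\subseteq$'' inclusions in Parts (1) and (2). For Part (2), Proposition \ref{sche02} gives $\delta_i(M) = \bigcap_{j \in S_{>i}} N_j$, so the canonical map $M/\delta_i(M) \to \bigoplus_{j \in S_{>i}} M/N_j$ sending $m + \delta_i(M)$ to $(m + N_j)_{j}$ is injective; hence $\Ass(M/\delta_i(M)) \subseteq \bigcup_{j \in S_{>i}} \Ass(M/N_j) = \Ass(M)_{>i}$. For Part (1) I would argue by dimension: since $\delta_i(M) \subseteq M$ we have $\Ass(\delta_i(M)) \subseteq \Ass(M)$, and every $\pp \in \Ass(\delta_i(M))$ satisfies $\dim(R/\pp) \le \dim \delta_i(M) \le i$, so $\Ass(\delta_i(M)) \subseteq \Ass(M)_{\le i}$.

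Next I would obtain the reverse inclusions for Parts (1) and (2) simultaneously and for free from the short exact sequence $0 \to \delta_i(M) \to M \to M/\delta_i(M) \to 0$, which yields $\Ass(M) \subseteq \Ass(\delta_i(M)) \cup \Ass(M/\delta_i(M))$. Since $\Ass(M) = \Ass(M)_{\le i} \sqcup \Ass(M)_{>i}$ is a disjoint union and the two upper bounds just proved land in the corresponding pieces, any $\pp \in \Ass(M)_{\le i}$ cannot lie in $\Ass(M/\delta_i(M)) \subseteq \Ass(M)_{>i}$ and must therefore lie in $\Ass(\delta_i(M))$; symmetrically for $\Ass(M)_{>i}$. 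This forces $\Ass(\delta_i(M)) = \Ass(M)_{\le i}$ and $\Ass(M/\delta_i(M)) = \Ass(M)_{>i}$, completing Parts (1) and (2).

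Finally, for Part (3) I would reduce to Part (2). Setting $N = \delta_i(M)$, the inclusions $\delta_{i-1}(M) \subseteq \delta_i(M) = N \subseteq M$ show that $\delta_{i-1}(M)$ is also the largest submodule of $N$ of dimension $\le i-1$, i.e. $\delta_{i-1}(N) = \delta_{i-1}(M)$. Applying Part (2) to $N$ with index $i-1$ gives $\Ass(\delta_i(M)/\delta_{i-1}(M)) = \Ass(N/\delta_{i-1}(N)) = \Ass(N)_{>i-1}$; since $\dim N \le i$ this equals $\Ass(N)_i$, and by Part (1) we have $\Ass(N) = \Ass(M)_{\le i}$, whose degree-$i$ part is exactly $\Ass(M)_i$. (Alternatively one can argue directly: the embedding $\delta_i(M)/\delta_{i-1}(M) \hookrightarrow \bigoplus_{\{j \mid \pp_j \in \Ass(M)_i\}} M/N_j$ gives ``$\subseteq$'', while $0 \to \delta_{i-1}(M) \to \delta_i(M) \to \delta_i(M)/\delta_{i-1}(M) \to 0$ together with Part (1) gives $\Ass(M)_i = \Ass(M)_{\le i} \setminus \Ass(M)_{\le i-1} \subseteq \Ass(\delta_i(M)/\delta_{i-1}(M))$.) The only real subtlety, and the point worth stating carefully, is that the short-exact-sequence inclusion $\Ass(B) \subseteq \Ass(A) \cup \Ass(C)$ runs the ``wrong way'' for reading off $\Ass(C)$ directly; the embeddings furnished by Proposition \ref{sche02} are precisely what supply the matching upper bounds, so the two ingredients must be combined rather than used in isolation.
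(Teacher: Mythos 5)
Your proof is correct, but it runs on different fuel than the paper's. Both arguments rest on Proposition \ref{sche02}, yet on different halves of it: the paper uses the identification $\delta_i(M) = H^0_{\aa_i}(M)$ together with two specific facts about the torsion functor, namely $\Ass(H^0_\aa(M)) = \Ass(M) \cap V(\aa)$ and the \emph{disjoint} union $\Ass(M) = \Ass(H^0_\aa(M)) \sqcup \Ass(M/H^0_\aa(M))$, which deliver (1) and (2) almost immediately; you instead use the primary-decomposition description $\delta_i(M) = \bigcap_{j \in S_{>i}} N_j$ and compensate for working with the weaker general containment $\Ass(B) \subseteq \Ass(A) \cup \Ass(C)$ by two upper bounds of your own making --- the embedding $M/\delta_i(M) \hookrightarrow \bigoplus_{j \in S_{>i}} M/N_j$ for (2) and the dimension bound $\dim(R/\pp) \leq \dim \delta_i(M) \leq i$ for (1) --- after which the disjointness of the strata $\Ass(M)_{\leq i} \sqcup \Ass(M)_{>i}$ forces both equalities at once. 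For (3) the divergence is sharper: the paper argues directly by two inclusions, using that $\delta_{i-1}(M) = H^0_{\aa_{i-1}}(\delta_i(M))$ to get $\Ass(\delta_i(M)/\delta_{i-1}(M)) \subseteq \Ass(\delta_i(M))$, whereas you reduce (3) to (1) and (2) applied to $N = \delta_i(M)$ via the neat observation $\delta_{i-1}(N) = \delta_{i-1}(M)$. The only point worth flagging there is cosmetic: Part (2) is stated for indices up to $\dim N$, and when $\dim N < i$ the index $i-1$ falls out of range; but in that case $\delta_i(M)/\delta_{i-1}(M) = 0$ and $\Ass(M)_i = \emptyset$, so the statement is vacuous (equivalently, (2) extends trivially to out-of-range indices), and your parenthetical direct argument sidesteps the issue entirely. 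What each approach buys: the paper's proof is shorter once one grants the torsion-functor facts, which are standard but require their own (short) proofs; yours is more self-contained, using only the most elementary properties of associated primes plus the direct-sum embedding trick, and it makes visible exactly where the irredundant primary decomposition enters.
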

 \begin{proof} It is well known that,   $\Ass(M)=\Ass(H^0_\aa(M))\sqcup \Ass(M/H^0_\aa(M))$, for any ideal $\aa$ of $R$. Also notice that $\Ass(H^0_{\aa}(M))=\Ass(M)\cap V(\aa)$, for all $\aa$.

   \smallskip
   \noindent
Since $\delta_i(M)= H^0_{\aa_i}(M)$ by Proposition \ref{sche02}, the first equality descends from  Remark \ref{remark submodule dimension and Ass} and the above observation.
    
For (2), consider the short exact sequence $0 \lra \delta_i(M) \lra M \lra M/\delta_i(M) \lra 0$. By Proposition \ref{sche02}  one has that $\Ass(M)=\Ass(\delta_i(M))\sqcup \Ass(M/\delta_i(M))$, which is equal to $\Ass(M)_{\leq i}\sqcup\Ass(M)_{>i}$ by (1), and the second equality follows.
    
Finally, consider the short exact sequence  $0 \to \delta_{i-1}(M) \to \delta_i(M) \to \delta_i(M)/\delta_{i-1}(M)\to 0$ and observe that  $\Ass(\delta_i(M)/\delta_{i-1}(M))\subseteq  \Ass(M/\delta_{i-1}(M))$. We also have $\Ass(\delta_i(M)/\delta_{i-1}(M))$ $\subseteq \Ass(\delta_i(M))$ since $\delta_{i-1}(M)=H^0_{\aa_{i-1}}(M)=H^0_{\aa_{i-1}}(\delta_i(M))$. Thus, by (1) and (2) we have necessarily that $\Ass(\delta_i(M)/\delta_{i-1}(M))$ $\subseteq \Ass(M)_i$. On the other hand, by (1), if $\pp\in\Ass(M)_i$ then $\pp\not\in\Ass(M_{i-1})$, and therefore $\pp\in\Ass(\delta_i(M)/\delta_{i-1}(M))$.  
   \end{proof}

\noindent As a corollary of Propositions \ref{sche02} and \ref{assass}, one can  obtain another characterization of the dimension filtration, cf. \cite [Proposition 1.1]{HP06}.
  
  \begin{corollary}\label{hepo1.1}
    Let $M$ be a $d$-dimensional $R$-module.  A filtration of $M$ by submodules $0 \subseteq M_0 \subseteq \ldots \subseteq M_d=M$ is the dimension filtration of $M$ if and only if  $\Ass(M_i/M_{i-1})=\Ass(M)_i$, for all $i$. 
  \end{corollary}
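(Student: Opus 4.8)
The plan is to match the given filtration with the dimension filtration $\mathcal{M}\colon 0 \subseteq \delta_0(M) \subseteq \ldots \subseteq \delta_d(M)=M$ term by term, adopting the convention $M_{-1}=0$ so that the condition $\Ass(M_i/M_{i-1})=\Ass(M)_i$ is also meaningful for $i=0$. The forward implication is then immediate from what has already been established: if $M_i=\delta_i(M)$ for all $i$, then Proposition \ref{assass}(3) gives $\Ass(\delta_i(M)/\delta_{i-1}(M))=\Ass(M)_i$ for $i\geq 1$, while part (1), together with the fact that $\Ass(M)_{\leq 0}=\Ass(M)_0$, settles the base case $i=0$. Thus all the content lies in the converse.

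For the converse I would assume that $0 \subseteq M_0 \subseteq \ldots \subseteq M_d=M$ satisfies $\Ass(M_i/M_{i-1})=\Ass(M)_i$ for all $i$, and prove $M_i=\delta_i(M)$ by a two-sided containment. The two elementary tools are: for any short exact sequence $0 \to A \to B \to C \to 0$ one has $\Ass(A)\subseteq\Ass(B)\subseteq\Ass(A)\cup\Ass(C)$; and, for a finitely generated module $N$, $\dim N=\max\{\dim R/\pp : \pp\in\Ass(N)\}$, so that $\dim N\leq i$ exactly when every associated prime of $N$ has $\dim R/\pp\leq i$.

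The inclusion $M_i\subseteq\delta_i(M)$ comes from iterating $\Ass(B)\subseteq\Ass(A)\cup\Ass(C)$ along $0\subseteq M_0\subseteq\ldots\subseteq M_i$, which yields $\Ass(M_i)\subseteq\bigcup_{j\leq i}\Ass(M_j/M_{j-1})=\bigcup_{j\leq i}\Ass(M)_j=\Ass(M)_{\leq i}$; hence $\dim M_i\leq i$, and the maximality defining $\delta_i(M)$ forces $M_i\subseteq\delta_i(M)$. For the reverse inclusion I would run the same computation on the quotient filtration $0\subseteq M_{i+1}/M_i\subseteq\ldots\subseteq M/M_i$, whose successive quotients are the $M_j/M_{j-1}$ with $j>i$; this gives $\Ass(M/M_i)\subseteq\bigcup_{j>i}\Ass(M)_j=\Ass(M)_{>i}$, so every associated prime of $M/M_i$ has dimension $>i$. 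Consequently $M/M_i$ has no nonzero submodule of dimension $\leq i$, since such a submodule would carry an associated prime, necessarily of dimension $>i$, contradicting its dimension being $\leq i$. Applying this to the image $(\delta_i(M)+M_i)/M_i$, which is a quotient of $\delta_i(M)$ and therefore has dimension $\leq i$, shows that this image vanishes, i.e. $\delta_i(M)\subseteq M_i$. Combining the two containments gives $M_i=\delta_i(M)$ for every $i$.

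I do not anticipate a genuine obstacle: the argument rests only on the maximality property built into the definition of $\delta_i(M)$ and on the standard behaviour of $\Ass$ under short exact sequences. The one point that needs care is the bookkeeping with indices and the boundary case $i=0$ (handled by $\Ass(M)_{\leq 0}=\Ass(M)_0$ and the convention $M_{-1}=0$), together with keeping the forward direction — a direct appeal to Proposition \ref{assass} — cleanly separated from the converse.
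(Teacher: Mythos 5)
Your proof is correct. The paper itself contains no written argument for this statement — it is presented as an immediate consequence of Propositions \ref{sche02} and \ref{assass}, with a pointer to the literature — and your forward direction is exactly that intended appeal to Proposition \ref{assass}(3) (with part (1) covering $i=0$). The converse, which is where the real content lies and which the paper leaves implicit, you supply correctly: $M_i \subseteq \delta_i(M)$ follows from $\Ass(M_i) \subseteq \Ass(M)_{\leq i}$, hence $\dim M_i \leq i$, together with the maximality defining $\delta_i(M)$; and $\delta_i(M) \subseteq M_i$ follows because $\Ass(M/M_i) \subseteq \Ass(M)_{>i}$ forces $M/M_i$ to have no nonzero submodule of dimension $\leq i$, so the image of $\delta_i(M)$ in $M/M_i$ vanishes. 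This is the natural completion of the derivation the paper gestures at, and your bookkeeping at the boundary case $i=0$ (via $M_{-1}=0$ and $\Ass(M)_{\leq 0}=\Ass(M)_0$) is sound.
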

  
The following definition was introduced in \cite[Definition 4.1]{Sch99}.
  \begin{definition}
  Let $M$ be a $d$-dimensional finitely generated $R$-module, and $\mathcal{M}:0 \subseteq \delta_0(M) \subseteq \ldots \subseteq \delta_d(M) =M$ be its dimension filtration. Then, $M$ is called {\em Cohen-Macaulay filtered} if for all $i \in \{0,\ldots,d\}$ the module $\delta_i(M)/\delta_{i-1}(M)$ is either zero or Cohen-Macaulay.   
 \end{definition}

  \noindent
  Notice that from the definition it immediately follows that if $\delta_i(M)/\delta_{i-1}(M)\neq 0$, then it has dimension $i$.
  
 A Cohen-Macaulay filtered module $M$ is sequentially Cohen-Macaulay. In fact, we may let $i_1$ denote  the smallest integer such that $\delta_{i_1}(M) \ne 0$ and set $M_1=\delta_{i_1}(M)$. Then, if $i_j$ is the smallest integer such that $\delta_{i_j-1}(M)\subsetneq \delta_{i_j}(M)$, we let $M_j=\delta_{i_j}(M)$. The resulting filtration $0 =M_0 \subsetneq M_1 \subsetneq \ldots \subsetneq M_r =M$ is clearly a sCM filtration of $M$.

A filtration $0=C_{-1}\subseteq C_0\subseteq C_1\subseteq \ldots \subseteq C_d=M$ such that each of its quotients $C_i/C_{i-1}$ is either zero or Cohen-Macaulay of dimension $i$ is called a {\em CM filtration} of $M$.  To see that the notions of sequentially Cohen-Macaulay and Cohen-Macaulay filtered modules coincide, we need the following result, which shows that if a CM filtration of a module exists, then it is unique and it is equal to its dimension filtration.

  \begin{proposition}[\cite{Sch99}, Proposition 4.3]\label{CMFvsDF}
Let $M$ be a $d$-dimensional finitely generated $R$-module. If $M$ has a CM filtration $0=C_{-1}\subseteq C_0\subseteq C_1\subseteq \ldots \subseteq C_d=M$, then $C_i = \delta_i(M)$ for every $i \in \{0,\ldots,d\}$.
  \end{proposition}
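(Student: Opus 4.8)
The plan is to fix an index $i \in \{0,\dots,d\}$ and prove the two inclusions $C_i \subseteq \delta_i(M)$ and $\delta_i(M) \subseteq C_i$ separately, so that no induction on $i$ is actually needed. The inclusion $C_i \subseteq \delta_i(M)$ is the easy one and follows from the maximality that defines $\delta_i(M)$. Indeed, the submodule $C_i$ inherits the filtration $0 = C_{-1} \subseteq C_0 \subseteq \dots \subseteq C_i$, whose successive quotients $C_j/C_{j-1}$ (for $0 \le j \le i$) are each either zero or Cohen-Macaulay of dimension $j \le i$. Since for a short exact sequence $0 \to A \to B \to B/A \to 0$ one has $\dim B = \max\{\dim A, \dim(B/A)\}$, an immediate induction along this filtration gives $\dim C_i = \max_{0 \le j \le i} \dim(C_j/C_{j-1}) \le i$. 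As $\delta_i(M)$ is by definition the largest submodule of $M$ of dimension at most $i$, we conclude $C_i \subseteq \delta_i(M)$.

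For the reverse inclusion $\delta_i(M) \subseteq C_i$, which is the main point, I would pass to the quotient $M/C_i$. This module carries the filtration $0 = C_i/C_i \subseteq C_{i+1}/C_i \subseteq \dots \subseteq C_d/C_i = M/C_i$, whose successive quotients are isomorphic to $C_j/C_{j-1}$ for $j > i$, hence are zero or Cohen-Macaulay of dimension $j > i$. Because a nonzero Cohen-Macaulay module is unmixed, every associated prime $\pp$ of $C_j/C_{j-1}$ satisfies $\dim R/\pp = j > i$; combining this with the standard containment $\Ass(M/C_i) \subseteq \bigcup_{j>i}\Ass(C_j/C_{j-1})$ coming from the filtration, we obtain $\Ass(M/C_i)_{\le i} = \emptyset$. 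By Remark \ref{remark submodule dimension and Ass} this says exactly that $M/C_i$ has no nonzero submodule of dimension $\le i$.

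Now I would observe that the image of $\delta_i(M)$ under the projection $M \to M/C_i$ is a submodule of $M/C_i$ of dimension at most $\dim \delta_i(M) \le i$. By the previous paragraph this image must be zero, whence $\delta_i(M) \subseteq \ker(M \to M/C_i) = C_i$. Together with the first step this yields $C_i = \delta_i(M)$, as claimed. The only genuinely delicate point is the reverse inclusion, and its crux is the unmixedness of Cohen-Macaulay modules, which forces all associated primes of the top quotients $C_j/C_{j-1}$ with $j>i$ to have dimension strictly larger than $i$ and thereby prevents $M/C_i$ from absorbing any part of the low-dimensional submodule $\delta_i(M)$. Alternatively, one could try to verify the associated-prime criterion of Corollary \ref{hepo1.1} directly, but checking that every prime in $\Ass(C_i/C_{i-1})$ is genuinely associated to $M$ is less transparent than the maximality argument above, so I would favor the route through $M/C_i$.
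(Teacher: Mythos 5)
Your proof is correct, but it follows a genuinely different route from the paper's. The paper argues by induction on $d=\dim(M)$: by the inductive hypothesis it suffices to prove $C_{d-1}=\delta_{d-1}(M)$, and this single equality is obtained by combining Proposition \ref{sche02} (the description $\delta_{d-1}(M)=H^0_{\aa_{d-1}}(M)$) with a prime-avoidance argument showing that $\aa_{d-1}$ contains an $M/C_{d-1}$-regular element, whence $H^0_{\aa_{d-1}}(M/C_{d-1})=0$ and $\delta_{d-1}(M)=H^0_{\aa_{d-1}}(C_{d-1})\subseteq C_{d-1}$. You instead fix an arbitrary $i$, prove $C_i\subseteq\delta_i(M)$ by the same dimension count the paper uses, and then get $\delta_i(M)\subseteq C_i$ by showing $\Ass(M/C_i)_{\leq i}=\emptyset$ directly: the filtration of $M/C_i$ by the $C_j/C_i$ gives $\Ass(M/C_i)\subseteq\bigcup_{j>i}\Ass(C_j/C_{j-1})$, unmixedness of Cohen-Macaulay modules forces all these primes to have dimension $>i$, and Remark \ref{remark submodule dimension and Ass} then kills the image of $\delta_i(M)$ in $M/C_i$. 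The two proofs share the same crux --- unmixedness of the Cohen-Macaulay quotients prevents $M/C_i$ (resp.\ $M/C_{d-1}$) from containing low-dimensional submodules --- but your version dispenses with the induction on $d$, with local cohomology, and with Proposition \ref{sche02} entirely, making it more elementary and self-contained; the paper's version, on the other hand, reuses machinery it has already built (and needs elsewhere, e.g.\ for Proposition \ref{assass}), and stays closer to Schenzel's original inductive argument. One small point worth making explicit in your write-up: the containment $\Ass(M/C_i)\subseteq\bigcup_{j>i}\Ass(C_j/C_{j-1})$ follows by iterating $\Ass(B)\subseteq\Ass(A)\cup\Ass(B/A)$ along the filtration, and the quotients $(C_j/C_i)/(C_{j-1}/C_i)\cong C_j/C_{j-1}$ are exactly the modules your unmixedness argument applies to.
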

  \begin{proof}
We proceed by induction on $d$. If $d=0$, then $C_0=M=\delta_0(M)$ and this case is  complete. Assume henceforth that $d>0$, so that by induction we have that $C_i=\delta_i(C_{d-1})$  for all $i<d$. Since for $i<d$ we have that $\delta_i(M) = \delta_i(\delta_{d-1}(M))$, it suffices to show that $\delta_{d-1}(M) = C_{d-1}$.

Observe that $\Ass(C_0)\subseteq \Ass(M)_{\leq 0}$ and, for all $\pp\in\Ass(C_i/C_{i-1})$, we have $\dim A/\pp=i$ if $C_i\neq C_{i-1}$. With this information, we can inductively show that $\dim C_i\leq i$ and, accordingly  $C_i\subseteq \delta_i(M)$ for all $i$; in particular, $C_{d-1} \subseteq \delta_{d-1}(M)$. If $\Ass(M)_{\leq d-1} =\emptyset$, then $\delta_{d-1}(M) = 0$ by Remark \ref{remark submodule dimension and Ass}, and the desired equality is trivial. Otherwise,  we let
  $\aa=\aa_{d-1}=\prod\limits_{\pp\in \Ass(M)_{\leq d-1}} \pp$ and we claim that $H^0_\aa(M/C_{d-1}) = 0$. This is obvious if  $M/C_{d-1} =0$. Thus, assume $C_{d-1}\neq M$ and observe that $\aa$ contains a regular element of $M/C_{d-1}$. To see the latter, assume that $\aa \subseteq \bigcup\limits_{\pp\in\Ass(M/C_{d-1})}\pp$; then,  by prime avoidance we can find $\pp\in \Ass(M/C_{d-1})$ such that $\aa\subseteq\pp$ and, therefore,  $\pp$ contains a prime $\pp' \in \Ass(M)_{\leq d-1}$. This is a contradiction, since it would imply $d=\dim(M/C_{d-1}) = \dim(R/\pp) \leq \dim(R/\pp') \leq d-1$.
Finally, consider the short exact sequence $0\lra C_{d-1} \lra M \lra M/C_{d-1} \lra 0$. By Proposition \ref{sche02} we have that $\delta_{d-1}(M) = H^0_{\aa}(M)  = H^0_{\aa}(C_{d-1}) \subseteq C_{d-1}$. Thus, $C_{d-1} = \delta_{d-1}(M)$, and we are done.
  \end{proof}
  
\begin{theorem}[Schenzel] \label{thm sCM and CMf}
Let $M$ be a finitely generated $R$-module; then  $M$ is sequentially Cohen-Macaulay if and only if $M$ is Cohen-Macaulay filtered. Moreover, if $M$ is sequentially Cohen-Macaulay, then its sCM filtration is unique.
\end{theorem}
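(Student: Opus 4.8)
The plan is to reduce everything to Proposition~\ref{CMFvsDF}, which guarantees that a CM filtration, when it exists, is unique and coincides with the dimension filtration; this proposition carries the real weight of the argument. Note that the implication ``Cohen-Macaulay filtered $\Rightarrow$ sequentially Cohen-Macaulay'' has already been verified in the paragraph preceding the statement, by deleting the repetitions in the dimension filtration. It therefore remains to prove the converse implication and the uniqueness of the sCM filtration, and I would obtain both by converting an arbitrary sCM filtration into a CM filtration.

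Suppose then that $M$ is sequentially Cohen-Macaulay, with sCM filtration $0=M_0\subsetneq M_1\subsetneq\ldots\subsetneq M_r=M$, and put $d_j=\dim(M_j/M_{j-1})$, so that $d_1<d_2<\cdots<d_r$. By Proposition~\ref{sullecoom} one has $\dim(M_j)=d_j$, and in particular $d_r=\dim M$, a value I denote by $d$. The key step is to pad the strictly increasing chain $(M_j)$ into a chain indexed by every integer from $0$ to $d$: for $i\in\{0,\ldots,d\}$ I would set $C_i=M_j$ with $j=\max\{a : d_a\leq i\}$, taking $C_i=0$ when this set is empty (equivalently $i<d_1$) and $C_{-1}=0$. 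A short check then shows that $C_i/C_{i-1}$ equals $M_j/M_{j-1}$, Cohen-Macaulay of dimension $i$, when $i=d_j$ for some $j$, and is zero otherwise; thus $0=C_{-1}\subseteq C_0\subseteq\ldots\subseteq C_d=M$ is a CM filtration. Proposition~\ref{CMFvsDF} then yields $C_i=\delta_i(M)$ for every $i$, whence each quotient $\delta_i(M)/\delta_{i-1}(M)=C_i/C_{i-1}$ is zero or Cohen-Macaulay. This is precisely the assertion that $M$ is Cohen-Macaulay filtered.

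For uniqueness I would observe that the same construction forces the terms of any sCM filtration. The computation above gives $M_j=C_{d_j}=\delta_{d_j}(M)$, while the dimensions $d_1<\cdots<d_r$ are exactly those indices $i$ for which $\delta_i(M)\neq\delta_{i-1}(M)$. Since the dimension filtration $\{\delta_i(M)\}$ depends only on $M$, both the jump dimensions $d_j$ and the submodules $M_j$ are determined by $M$; hence any two sCM filtrations share the same length and the same terms, and uniqueness follows.

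The bulk of the work is thus the verification that the padded chain is a genuine CM filtration, together with the bookkeeping identifying where it jumps. The only point deserving attention is confirming that the chain reaches the top, i.e. that $d_r=\dim M$ so that $C_d=M_r=M$; this relies on Proposition~\ref{sullecoom} and on the fact that dimensions strictly increase along an sCM filtration. I do not anticipate a genuine obstacle: with Proposition~\ref{CMFvsDF} in hand the theorem is essentially formal, and the difficulty has been absorbed into that earlier result.
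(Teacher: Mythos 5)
Your proof is correct and follows essentially the same route as the paper: both pad the sCM filtration $\{M_j\}$ into a chain $\{C_i\}$ indexed by all dimensions via $C_i=M_{\max\{a\,:\,d_a\leq i\}}$, invoke Proposition~\ref{CMFvsDF} to identify this CM filtration with the dimension filtration, and deduce uniqueness by reconstructing the sCM filtration from the jumps of $\{\delta_i(M)\}$. No meaningful differences to report.
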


\begin{proof}
 We only need to show that the ``only if'' part.
   Let $\mathcal{M}: 0 = M_0 \subsetneq M_1 \subsetneq \ldots \subsetneq M_r=M$ be any sCM filtration of $M$, with $d=\dim(M)$, $d_0=-1$ and $d_i = \dim(M_i/M_{i-1})$. For $j \in \{-1, 0,\ldots, d\}$,  we also let $i(j)$ be the largest integer $i$ such that $d_{i} \leq j$, and we set $C_j=M_{i(j)}$. In this way we have constructed a filtration  $\mathcal{C}:0=C_{-1} \subseteq C_0 \subseteq \ldots \subseteq C_d=M$, which  is a CM filtration of $M$. By Proposition \ref{CMFvsDF} we may conclude that $C_i = \delta_i(M)$, and that $\mathcal{C}$ is the unique dimension filtration of $M$. Thus,  $M$ is Cohen-Macaulay filtered.

Observe that  one can reconstruct $\mathcal{M}$ from  $\mathcal{C}$; it follows that the modules $M_i$ only depend on $M$ as well, and $\mathcal{M}$ is also unique.
\end{proof}

\noindent Notice that, by the above theorem and  Proposition \ref{assass}, if $M$ is sequentially Cohen-Macaulay with sCM filtration $\mathcal{M}$, then $\Ass(M)=\sqcup_i \Ass(M_i/M_{i-1})$.  The comparison between sCM filtrations and dimension filtrations has also the following useful consequences. 

\begin{corollary} \label{coroll H0}
Let $M$ be a $d$-dimensional finitely generated $R$-module. The following are equivalent:
\begin{enumerate}[(1)]
\item $M$ is sequentially Cohen-Macaulay;
\item The modules $\delta_i(M)$ and $M/\delta_i(M)$ are sequentially Cohen-Macaulay for all $i \in \{0,\ldots,d\}$;
\item There exists $i \in \{0,\ldots,d\}$ such that $\delta_i(M)$ and $M/\delta_i(M)$ are sequentially Cohen-Macaulay.
\end{enumerate}

In particular,  $M$ is sequentially Cohen-Macaulay if and only if $M/H^0_\mm(M)$ is sequentially Cohen-Macaulay. 
\end{corollary}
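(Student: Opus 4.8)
The plan is to reduce everything to Schenzel's theorem (Theorem \ref{thm sCM and CMf}), which identifies ``sequentially Cohen-Macaulay'' with ``Cohen-Macaulay filtered'', i.e.\ with the condition that every quotient $\delta_j(M)/\delta_{j-1}(M)$ of the dimension filtration is either zero or Cohen-Macaulay. The entire argument then rests on recognizing the dimension filtrations of $\delta_i(M)$ and of $M/\delta_i(M)$ as, respectively, a truncation and a quotient of the dimension filtration $\mathcal{M}$ of $M$.

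First I would record the two key filtration identifications. For $j \le i$ one has $\delta_j(\delta_i(M)) = \delta_j(M)$, because $\delta_j(M) \subseteq \delta_i(M)$ is the largest submodule of $M$ --- hence also of $\delta_i(M)$ --- of dimension at most $j$; thus the dimension filtration of $\delta_i(M)$ is exactly the truncation $0 \subseteq \delta_0(M) \subseteq \ldots \subseteq \delta_i(M)$. For the quotient, Proposition \ref{assass}(2) gives $\Ass(M/\delta_i(M)) = \Ass(M)_{>i}$, so by Remark \ref{remark submodule dimension and Ass} the modules $\delta_j(M/\delta_i(M))$ vanish for $j \le i$; and for $j > i$ the chain $0 \subseteq \delta_{i+1}(M)/\delta_i(M) \subseteq \ldots \subseteq M/\delta_i(M)$ has successive quotients isomorphic to $\delta_j(M)/\delta_{j-1}(M)$, whose sets of associated primes are $\Ass(M)_j$ by Proposition \ref{assass}(3). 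By the characterization of the dimension filtration in Corollary \ref{hepo1.1}, this chain is precisely the dimension filtration of $M/\delta_i(M)$.

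With these identifications in hand, the equivalences follow by gluing Cohen-Macaulay quotients. For (1)$\Rightarrow$(2): if $M$ is Cohen-Macaulay filtered then every $\delta_j(M)/\delta_{j-1}(M)$ is zero or Cohen-Macaulay; the dimension-filtration quotients of $\delta_i(M)$ (those indexed by $j \le i$) and of $M/\delta_i(M)$ (those indexed by $j > i$) are among these, so both modules are Cohen-Macaulay filtered, hence sequentially Cohen-Macaulay by Theorem \ref{thm sCM and CMf}. The implication (2)$\Rightarrow$(3) is immediate. For (3)$\Rightarrow$(1): given a single index $i$ for which both $\delta_i(M)$ and $M/\delta_i(M)$ are sequentially Cohen-Macaulay, Theorem \ref{thm sCM and CMf} forces all their dimension-filtration quotients to be zero or Cohen-Macaulay; the former supplies $\delta_j(M)/\delta_{j-1}(M)$ for $j \le i$ and the latter for $j > i$, so every quotient of $\mathcal{M}$ is zero or Cohen-Macaulay and $M$ is Cohen-Macaulay filtered, hence sequentially Cohen-Macaulay.

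Finally, the last assertion is the case $i=0$ of the equivalence (1)$\Leftrightarrow$(3): since $\delta_0(M) = H^0_\mm(M)$ has dimension at most $0$, it is either zero or of finite length, hence always (sequentially) Cohen-Macaulay, so the only nontrivial condition in (3) for $i=0$ is that $M/\delta_0(M) = M/H^0_\mm(M)$ be sequentially Cohen-Macaulay. I expect the only genuine obstacle to be the correct identification of the two dimension filtrations above; once the associated-prime bookkeeping through Corollary \ref{hepo1.1} and Proposition \ref{assass} is in place, everything else is the formal gluing of Cohen-Macaulay pieces.
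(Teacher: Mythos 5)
Your proposal is correct and follows essentially the same route as the paper: both identify the dimension filtration of $\delta_i(M)$ as the truncation $0 \subseteq \delta_0(M) \subseteq \ldots \subseteq \delta_i(M)$ and that of $M/\delta_i(M)$ as the quotient filtration $\{\delta_j(M)/\delta_i(M)\}_{j \geq i}$, and then read off all three equivalences (and the $i=0$ special case) from Schenzel's identification of sequentially Cohen-Macaulay with Cohen-Macaulay filtered (Theorem \ref{thm sCM and CMf}). The only difference is that you justify the filtration identifications in detail via Proposition \ref{assass} and Corollary \ref{hepo1.1}, where the paper simply asserts them.
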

\begin{proof}
Observe that, given  any $i \in \{0,\ldots,d\}$,  we have that $\delta_j(M) = \delta_j(\delta_i(M))$ for every $j \leq i$, and $\delta_j(M/\delta_i(M)) = \delta_j(M)/\delta_i(M)$ for every $j \geq i$. It follows that $0 \subseteq \delta_0(M) \subseteq \ldots \subseteq \delta_i(M)$ and $0 = \delta_i(M)/\delta_i(M) \subseteq \delta_{i+1}(M)/\delta_i(M) \subseteq \ldots \subseteq \delta_d(M)/\delta_i(M)$ are the dimension filtrations of $\delta_i(M)$ and $M/\delta_i(M)$ respectively. This yields all the assertions at once.
\end{proof}

Recall now that a finitely generated $R$-module $M$ is {\em unmixed} if $\dim(R/\pp) = \dim(M)$ for all $\pp \in \Ass(M)$.

\begin{corollary} Let $M$ be a finitely generated unmixed $R$-module. Then, $M$ is sequentially Cohen-Macaulay if and only if $M$ is Cohen-Macaulay.
\end{corollary}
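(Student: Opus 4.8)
The plan is to establish the nontrivial ``only if'' implication, since the ``if'' direction is immediate: a Cohen-Macaulay module is sequentially Cohen-Macaulay by Example \ref{example sCM}(1). The key point is that unmixedness forces the dimension filtration of $M$ to collapse to the trivial filtration $0 \subseteq M$, after which Schenzel's characterization does all the work.

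First I would record the effect of unmixedness on the associated primes. Since $M$ is unmixed of dimension $d$, every $\pp \in \Ass(M)$ satisfies $\dim(R/\pp) = d$. Hence $\Ass(M)_{\leq i} = \emptyset$ for every $i < d$, while $\Ass(M)_d = \Ass(M)$. By Remark \ref{remark submodule dimension and Ass} this yields $\delta_i(M) = 0$ for all $i < d$, so the dimension filtration of $M$ reduces to $0 = \delta_{d-1}(M) \subseteq \delta_d(M) = M$.

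Next, assume $M$ is sequentially Cohen-Macaulay. By Theorem \ref{thm sCM and CMf} the module $M$ is then Cohen-Macaulay filtered, meaning each quotient $\delta_i(M)/\delta_{i-1}(M)$ of its dimension filtration is either zero or Cohen-Macaulay. By the previous paragraph the only possibly nonzero such quotient is $\delta_d(M)/\delta_{d-1}(M) = M/0 = M$ itself, which must therefore be Cohen-Macaulay. This finishes the proof.

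I do not expect any real obstacle here: the entire content is the reduction of the dimension filtration via unmixedness, and everything else is bookkeeping with results already in hand. As an alternative, one could argue directly from an sCM filtration $0 = M_0 \subsetneq M_1 \subsetneq \cdots \subsetneq M_r = M$, using the coincidence of the sCM and dimension filtrations (Theorem \ref{thm sCM and CMf}) together with Proposition \ref{assass} to see that $\Ass(M) = \bigsqcup_i \Ass(M_i/M_{i-1})$ is concentrated in dimension $d$; this forces $r = 1$, so $M = M_1$ is Cohen-Macaulay.
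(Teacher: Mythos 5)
Your proof is correct and follows essentially the same route as the paper's: unmixedness plus Remark \ref{remark submodule dimension and Ass} collapses the dimension filtration to $0 \subseteq M$, and Theorem \ref{thm sCM and CMf} (Cohen-Macaulay filtered) then forces $M = \delta_d(M)/\delta_{d-1}(M)$ to be Cohen-Macaulay. The alternative argument you sketch via Proposition \ref{assass} is also fine, but the main argument is exactly the paper's.
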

\begin{proof}
One direction is clear. Let $d=\dim(M)$; in view of Remark \ref{remark submodule dimension and Ass}, the fact that $M$ is unmixed guarantees that $\delta_i(M) = 0$ for all $i<d$. By Theorem \ref{thm sCM and CMf}, $M$ is Cohen-Macaulay filtered, and thus $\delta_d(M)/\delta_{d-1}(M) = M$ is Cohen-Macaulay.
\end{proof}

\subsection{Peskine's characterization} 
We will always assume that $R$ has a canonical module $\omega_R$. In our setup, this assumption is not too restrictive: it is always the case when $R$ is standard graded, and it is true for instance if $R$ is complete local, cf. Remark \ref{perdopo}.

\begin{proposition}\label{fundaext}
 Let $R$ be an $n$-dimensional Cohen-Macaulay ring with canonical module $\omega_R$ and $M$ be sequentially Cohen-Macaulay, with sCM filtration $0=M_0\subsetneq M_1\subsetneq \ldots \subsetneq M_r=M$; also let $d_i=\dim M_i/M_{i-1}$, for $i=1,\ldots,r$. Then
\begin{enumerate}[(1)]
\item  for all $i=1,\ldots,r$, one has that $\Ext_R^{n-d_i}(M, \omega_R)\simeq \Ext_R^{n-d_i}(M_i/M_{i-1},\omega_R)$ is Cohen-Macaulay and has dimension $d_i$;
  \item $\Ext_R^{n-j}(M,\omega_R)=0$ whenever $j\not\in\{d_1,\ldots,d_r\}$;
\item $\Ext_R^{n-d_i}(\Ext_R^{n-d_i}(M,\omega_R),\omega_R)\simeq M_i/M_{i-1}$ for $i=1,\ldots,r$.
\end{enumerate}
\end{proposition}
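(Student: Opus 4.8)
The plan is to read off all three statements from the computation of the local cohomology of $M$ in Proposition \ref{sullecoom}, passing from local cohomology to deficiency modules via local duality. Since $R$ is Cohen-Macaulay of dimension $n$ with canonical module $\omega_R$, local duality (cf. \cite{BH93} and the discussion preceding this subsection) provides functorial isomorphisms $\Ext^{n-j}_R(M,\omega_R)\cong H^j_\mm(M)^\vee$ for every finitely generated $M$ and every $j$, where $(-)^\vee$ denotes Matlis duality (its graded counterpart in the graded case, up to a degree shift that will not affect any of the conclusions below). Because Matlis duality is exact and faithful, it turns vanishing into vanishing and isomorphisms into isomorphisms, so the three assertions about $\Ext$ modules are equivalent to the corresponding assertions about local cohomology already recorded in Proposition \ref{sullecoom}.

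For Part (2), Proposition \ref{sullecoom}(1) tells us that $H^j_\mm(M)=0$ whenever $j\notin\{d_1,\ldots,d_r\}$; dualizing gives $\Ext^{n-j}_R(M,\omega_R)=0$ for such $j$. For Part (1), the same proposition supplies the isomorphism $H^{d_i}_\mm(M)\cong H^{d_i}_\mm(M_i/M_{i-1})$, and dualizing yields $\Ext^{n-d_i}_R(M,\omega_R)\cong\Ext^{n-d_i}_R(M_i/M_{i-1},\omega_R)$. It then remains to identify the right-hand side: since $M_i/M_{i-1}$ is Cohen-Macaulay of dimension $d_i$, the module $\Ext^{n-d_i}_R(M_i/M_{i-1},\omega_R)$ is precisely its canonical module $K_{M_i/M_{i-1}}$, and the theory of canonical modules (\cite{BH93}, Theorem 3.3.10) guarantees that $K_{M_i/M_{i-1}}$ is again Cohen-Macaulay of dimension $d_i$ and that $\Ext^{j}_R(M_i/M_{i-1},\omega_R)=0$ for $j\neq n-d_i$.

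Part (3) then follows by dualizing once more. Using Part (1) to rewrite $\Ext^{n-d_i}_R(M,\omega_R)\cong K_{M_i/M_{i-1}}$, I compute $\Ext^{n-d_i}_R(\Ext^{n-d_i}_R(M,\omega_R),\omega_R)\cong\Ext^{n-d_i}_R(K_{M_i/M_{i-1}},\omega_R)=K_{K_{M_i/M_{i-1}}}$, and the canonical-module biduality for Cohen-Macaulay modules (again \cite{BH93}, Theorem 3.3.10) identifies $K_{K_{M_i/M_{i-1}}}$ with $M_i/M_{i-1}$.

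The only genuinely delicate point is the bookkeeping in the graded case: local duality and Matlis duality there carry degree shifts by the $a$-invariant of $\omega_R$, so the displayed isomorphisms should be understood up to a shift in internal degree. Since every property being asserted — vanishing, Cohen-Macaulayness, dimension, and isomorphism type — is insensitive to such shifts, this causes no real difficulty; the substantive input is contained entirely in Proposition \ref{sullecoom}, together with the standard facts that the canonical module of a $d$-dimensional Cohen-Macaulay module is itself $d$-dimensional Cohen-Macaulay and that taking the canonical module is an involution on the class of Cohen-Macaulay modules.
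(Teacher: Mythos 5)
Your overall strategy---dualizing the local cohomology computation of Proposition \ref{sullecoom} and then invoking \cite[Theorem 3.3.10 (c)]{BH93} for the Cohen-Macaulayness, dimension, and biduality statements---is exactly the strategy of the paper, and in the graded case your argument is complete: graded local duality holds verbatim there, and the degree shift you flag is indeed harmless.

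The genuine gap is in the local case. The proposition is stated for any Cohen-Macaulay local ring with a canonical module, which need not be complete (e.g.\ a localization of a polynomial ring), and for such $R$ your opening claim is false: the Matlis dual of $H^j_\m(M)$ is not $\Ext^{n-j}_R(M,\omega_R)$ but its $\m$-adic completion $\Ext^{n-j}_R(M,\omega_R)\otimes_R \widehat{R}$, because Matlis duals of Artinian modules are finitely generated over $\widehat{R}$, not over $R$. Concretely, for $M=R$ and $j=n$ one gets $H^n_\m(R)^\vee \cong \omega_R\otimes_R\widehat{R}$, which is not even a finitely generated $R$-module unless $R$ is complete, so it cannot be isomorphic to $\Ext^0_R(R,\omega_R)=\omega_R$. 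This is precisely the point where the paper's proof does extra work: it passes to the completion, where local duality is available ($\widehat{M}$ is sequentially Cohen-Macaulay over $\widehat{R}$ by Example \ref{nuovo} (2)), and then descends the conclusions along the faithfully flat map $R\to\widehat{R}$ using $\Ext^i_{\widehat{R}}(\widehat{N},\omega_{\widehat{R}})\cong \Ext^i_R(N,\omega_R)\otimes_R\widehat{R}$; vanishing, Cohen-Macaulayness and dimension descend immediately, and the isomorphism in (1) can be obtained by observing that the natural maps $\Ext^{n-d_i}_R(M_i/M_{i-1},\omega_R)\to \Ext^{n-d_i}_R(M_i,\omega_R)\leftarrow \Ext^{n-d_i}_R(M,\omega_R)$ become isomorphisms after tensoring with $\widehat{R}$, hence are isomorphisms. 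Your proof needs this completion-and-descent supplement (note that \cite[Theorem 3.3.10 (c)]{BH93} itself requires no completeness, so it is only the duality step that forces the detour).
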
  

\begin{proof}
In the graded case,  (1) and (2) follow immediately from Proposition \ref{sullecoom}, graded local duality \cite[Theorem 3.6.19]{BH93} and \cite[Theorem 3.3.10 (c) (i)]{BH93}. In the local case, local duality yields (1) and (2) for the completion $\widehat{M}$ of $M$ as an $\widehat{R}$-module, see Example \ref{nuovo} (2). Since $\omega_{\widehat{R}} \cong \widehat{\omega_R} \cong \omega_R \otimes_R \widehat{R}$ and $\Ext^i_{\widehat{R}}(\widehat{N},\omega_{\widehat{R}}) \cong \Ext^i_R(N,\omega_R) \otimes_R \widehat{R}$ for any finitely generated $R$-module $N$, we conclude by faithful flatness of $\widehat{R}$ that (1) and (2) hold also in the local case.

Finally, by \cite[Theorem 3.3.10 (c) (iii)]{BH93} we have that $N \cong \Ext_R^{n-d}(\Ext_R^{n-d}(N,\omega_R),\omega_R)$ for any Cohen-Macaulay module $N$ of dimension $d$ and, thus,  the last statement follows immediately from  (1).
\end{proof}

\begin{remark}\label{MmoduloM_1} Let $M$ be as in the above proposition.
  \begin{enumerate}[(1)]
\item    By Parts (2) and (3) of the previous proposition, we have an isomorphism $$M_1\cong\Ext_R^{n-t}(\Ext_R^{n-t}(M,\omega_R),\omega_R),$$ where $t=\depth M$ and $M_1$ is $t$-dimensional and Cohen-Macaulay. We will show an extension of this fact in Lemma \ref{monomorphism}.

\item Notice that if $M$ is sequentially Cohen-Macaulay with  $\depth(M)=0$, then in particular $M_1= H^0_\mm(M)$. In fact, using \cite[Theorem 3.3.10 (c) (iii)]{BH93} and the short exact sequence $0 \lra H^0_\m(M) \lra M \lra M/H^0_\m(M) \lra 0$ we get that
\[
M_1 \cong \Ext_R^n(\Ext_R^n(M,\omega_R),\omega_R) \cong \Ext_R^n(\Ext_R^n(H^0_\m(M),\omega_R),\omega_R) \cong H^0_\mm(M).
\]
  \end{enumerate}
\end{remark}

We  recall now  the following crucial lemma, see \cite[Lemma 1.5]{HS01}.

\begin{lemma}\label{monomorphism}
  Let $R$ be a Cohen-Macaulay $n$-dimensional ring with canonical module $\omega_R$ and $M$ be a finitely generated $R$-module.

  Let also $\depth(M)=t$ and assume that $\Ext_R^{n-t}(M,\omega_R)$ is Cohen-Macaulay of dimension $t$. Then, there is a natural monomorphism $\alpha\,:\,\Ext_R^{n-t}(\Ext_R^{n-t}(M,\omega_R),\omega_R)\lra M$ such that 
 $$\Ext^{n-t}(\alpha)\,:\, \Ext_R^{n-t}(M,\omega_R)\lra \Ext_R^{n-t}(\Ext_R^{n-t}(\Ext_R^{n-t}(M,\omega_R),\omega_R)\omega_R)$$ is an isomorphism.
 \end{lemma}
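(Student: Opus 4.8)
The plan is to work in the derived category and exploit that, as $R$ is Cohen--Macaulay with canonical module $\omega=\omega_R$, the functor $D(-):=\mathbf{R}\!\Hom_R(-,\omega)$ is a duality, so that the biduality morphism $\eta\colon \mathrm{id}\Rightarrow D\circ D$ is a natural isomorphism on complexes with finitely generated cohomology. I write $K^i(N):=\Ext^{n-i}_R(N,\omega)$ and use the standard facts (cf. \cite[Theorem 3.3.10]{BH93}) that $K^i(N)=0$ for $i>\dim N$, that $\dim K^i(N)\le i$, that $K^{\dim N}(N)$ is nonzero of dimension $\dim N$, and that for a Cohen--Macaulay module $N$ of dimension $d$ one has $K^d(N)$ Cohen--Macaulay of dimension $d$ with $K^d(K^d(N))\cong N$. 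To construct $\alpha$, note that $\depth M=t$ and local duality give $\Ext^{j}_R(M,\omega)=K^{n-j}(M)=0$ for $j>n-t$, so $C:=D(M)$ has cohomology in degrees $\le n-t$ with $H^{n-t}(C)=\Ext^{n-t}_R(M,\omega)=:K$. The canonical truncation yields a natural morphism $f\colon C\to K[-(n-t)]$ which is the identity on $H^{n-t}$. Since $K$ is Cohen--Macaulay of dimension $t$, one has $D\bigl(K[-(n-t)]\bigr)\simeq \Ext^{n-t}_R(K,\omega)=:A$ concentrated in degree $0$; applying $D$ to $f$ and composing with $\eta_M^{-1}\colon D(C)\xrightarrow{\ \sim\ } M$ produces the desired natural homomorphism
\[
\alpha\colon A=\Ext^{n-t}_R\bigl(\Ext^{n-t}_R(M,\omega),\omega\bigr)\lra M.
\]

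To see that $\Ext^{n-t}(\alpha)$ is an isomorphism I would apply $D$ once more. By naturality of $\eta$, the morphism $D(\alpha)$ is conjugate, via the biduality isomorphisms, to $f$ itself. Taking $H^{n-t}$ gives on the source $\Ext^{n-t}_R(M,\omega)=K$ and on the target $\Ext^{n-t}_R(A,\omega)=K^t(A)$; since $A=K^t(K)$ with $K$ Cohen--Macaulay of dimension $t$, biduality identifies $K^t(A)=K^t(K^t(K))\cong K$, and under this identification $\Ext^{n-t}(\alpha)=H^{n-t}(f)=\mathrm{id}_K$. Hence $\Ext^{n-t}(\alpha)$ is an isomorphism.

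Injectivity of $\alpha$ then follows by a dimension count. The module $A=\Ext^{n-t}_R(K,\omega)$ is Cohen--Macaulay of dimension $t$, hence unmixed. Set $L=\ker\alpha$ and factor $\alpha$ as $A\xrightarrow{\pi}I:=A/L\hookrightarrow M$. Applying $K^t(-)=\Ext^{n-t}_R(-,\omega)$, the composite $K^t(\pi)\circ K^t(\iota)=\Ext^{n-t}(\alpha)$ is an isomorphism, so $K^t(\pi)$ is surjective; the long exact sequence attached to $0\to L\to A\to I\to 0$ then yields an injection $K^t(L)\hookrightarrow K^{t-1}(I)$. If $L\neq 0$, unmixedness of $A$ forces $\dim L=t$, so $K^t(L)=K^{\dim L}(L)$ is nonzero of dimension $t$; but $\dim K^{t-1}(I)\le t-1$, a contradiction. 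Therefore $L=0$ and $\alpha$ is a monomorphism.

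The delicate point is the middle step: one must check that $D(\alpha)$ genuinely recovers $f$---not merely that $K$ and $K^t(A)$ are abstractly isomorphic---so that $\Ext^{n-t}(\alpha)$ is forced to be an isomorphism; this rests on the naturality of $\eta$ and on careful tracking of the degree shifts so that $C$, $K[-(n-t)]$ and their $D$-duals sit in the expected cohomological degrees. A more classical alternative that packages the same information is the biduality spectral sequence $E_2^{p,q}=\Ext^p_R(\Ext^{-q}_R(M,\omega),\omega)\Rightarrow M$: the estimate $\dim K^s(M)\le s$ confines all nonzero terms to $p+q\ge 0$, the corner term $E_2^{n-t,t-n}=A$ survives to $E_\infty$ with no incoming or outgoing differentials, and the corresponding edge map realizes $\alpha$ as the inclusion of the bottom filtration step $A=F^{n-t}M\hookrightarrow M$, giving the monomorphism for free; the isomorphism $\Ext^{n-t}(\alpha)$ would still be obtained by dualizing as above.
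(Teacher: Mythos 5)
Your proof is correct, and it takes a genuinely different route from the paper's. The paper first reduces to a standard graded polynomial ring (via graded local duality, base independence and $\Hom$--$\otimes$ adjointness, so that $\omega_R \cong R(-n)$ is free) and then argues with minimal free resolutions: it lifts the identity of $K:=\Ext^{n-t}_R(M,\omega_R)$ to a map of complexes $\phi_\bullet \colon F^*_\bullet \to G_\bullet$, sets $\alpha = H_0(\phi^*_\bullet)$, obtains the isomorphism statement from the canonical isomorphism $G^{**}_\bullet \cong G_\bullet$ together with $H_0(\psi)\circ H_0(\phi^{**}_\bullet) = H_0(\phi_\bullet) = \mathrm{id}_K$, and proves injectivity by localizing at the associated primes of the source $A:=\Ext^{n-t}_R(K,\omega_R)$ (all of dimension $t$, since $A$ is Cohen--Macaulay), thereby reducing to the case $t=0$, where $\alpha$ is identified with the inclusion $H^0_\m(M)\subseteq M$. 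You instead work intrinsically over $R$ with the duality $D=\mathbf{R}\Hom_R(-,\omega_R)$: your $\alpha$ is the dual of the canonical truncation map $f\colon D(M)\to K[-(n-t)]$; the ``delicate point'' you flag is settled by the triangle identity $D(\eta_M)\circ \eta_{D(M)}=\mathrm{id}_{D(M)}$, which together with naturality of $\eta$ gives $D(\alpha)=\eta_{K[-(n-t)]}\circ f$, so that $\Ext^{n-t}(\alpha)$ is precisely the biduality isomorphism; and your injectivity argument is a clean dimension count, valid as written: $L=\ker\alpha$ is a submodule of the unmixed $t$-dimensional module $A$, so if $L\ne 0$ then $\Ext^{n-t}_R(L,\omega_R)$ is nonzero of dimension $t$, yet surjectivity of $\Ext^{n-t}_R(\pi,\omega_R)$ forces it to embed into $\Ext^{n-t+1}_R(A/L,\omega_R)$, whose dimension is at most $t-1$. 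As for what each approach buys: the paper's argument is elementary and avoids derived categories entirely, but precisely for that reason it needs the polynomial-ring reduction (so that the dualized resolution $F^*_\bullet$ remains a complex of free modules) and a localization step for injectivity; yours needs the dualizing-complex formalism but requires no reduction, makes naturality of $\alpha$ manifest, and yields a shorter injectivity proof. Your spectral-sequence packaging is also sound --- incoming differentials at the corner term come from total degree $-1$, which vanishes by the bound $\dim\Ext^{n-s}_R(M,\omega_R)\le s$, while outgoing ones land in rows below the depth row, which vanish by local duality --- but be aware that this is essentially the unpublished argument of Peskine which the paper explicitly set out to replace with a self-contained proof (see the discussion preceding Theorem \ref{peskine}).
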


 \begin{proof}
   We only give a proof in the graded case; the local one is handled similarly. Without loss of generality we may assume that $R$ is a polynomial ring, as we show next. We write $R=S/I$, where $S$ is a standard graded polynomial ring of dimension $m$ over a field $k$ and $I \subseteq S$ is homogeneous. Let $\mm$ and $\nn$ denote the graded maximal ideals of  $R$ and $S$ respectively. By graded Local Duality \cite[Theorem 3.6.19]{BH93} we have that  $\Ext^{m-i}_S(M,\omega_S)\cong \Hom_S(H^{i}_\nn(M),E_S(k))$ and   $\Ext^{n-i}_R(M,\omega_R)\cong \Hom_R(H^{i}_\mm(M),E_R(k))$. By Base Independence  $H^i_{\nn}(M)\cong H^i_\mm(M)$ for all $i$. Thus, by  $\Hom-\otimes$ adjointness and \cite[Lemma 3.1.6]{BH93} we obtain
$$\Hom_S(H^i_\nn(M),E_S(k)) \cong \Hom_R(H^i_\mm(M),\Hom_S(R, E_S(k)))
\cong \Hom_R(H^{i}_\mm(M),E_R(k)).$$
In particular, we have shown that $\Ext^{n-t}_R(M,\omega_R)\cong\Ext^{m-t}_S(M, \omega_S)$ and that  we may assume that $R$ is a polynomial ring.

\smallskip
Let now $
F_\bullet\: 0\lra F_{n-t}\lra \ldots\lra F_1\lra F_0 \lra 0$
be the minimal graded free resolution of $M$; applying the functor $\Hom_R(-,\omega_R)$ we obtain the dual complex 
\[
F^*_\bullet\,:\,\,\,\,\,\,\,0\lra F^*_{n-t}\lra \ldots \lra F^*_1\lra F^*_0\lra 0, 
\]
with  $H_0(F^*)=\Ext_R^{n-t}(M,\omega_R)$. Observe that $F^*_\bullet$ is a complex of free modules, since  $\omega_R \cong R(-n)$.
If we also let  $G_\bullet$ denote  the minimal graded free resolution of $\Ext_R^{n-t}(M,\omega_R)$, then there is a map of complexes $\phi_\bullet\,:\, F^*_\bullet\lra G_\bullet$ which lifts the identity map of   
$\Ext_R^{n-t}(M,\omega_R)$. Since the last one is Cohen-Macaulay of dimension $t$, the length of  $G_\bullet$ is the same as  
that of  $F^*_\bullet$, namely $n-t$. Applying the functor $\Hom_R(-,\omega_R)$ to $\phi_\bullet$, we obtain a map of complexes $\phi^*_\bullet\: G^*_\bullet\lra F_\bullet^{**}$ which, in turn, gives a map on the zero-th cohomology:
\[
\alpha=H_0(\phi^*_\bullet)\,:\,  
\Ext_R^{n-t}(\Ext_R^{n-t}(M,\omega_R),\omega_R)=H_0(G^*_\bullet)\to H_0(F_\bullet^{**}) \cong H_0(F_\bullet)=M.
\]

\noindent Applying again the functor $\Hom_R(-,\omega_R)$, this time to $\phi^*_\bullet$, we obtain  a map of complexes $\phi^{**}_\bullet: F_\bullet^{***} \to G^{**}_\bullet$ and, thus, a map 
\[
\begin{split}
H_0(\phi^{**}_\bullet): H_0(F^{***}_\bullet) \cong H^0(F^*_\bullet)& \cong \Ext^{n-t}_R(M,\omega_R) \\
& \lra \Ext^{n-t}_R(\Ext^{n-t}_R(\Ext^{n-t}_R(M,\omega_R),\omega_R),\omega_R) = H_0(G^{**}_\bullet),
\end{split}
\]
which coincides with the map $\Ext^{n-t}(\alpha)$. The canonical isomorphism $\psi: G_\bullet^{**} {\lra} G_\bullet$, together with the fact that $\Ext^{n-t}_R(M,\omega_R)$ is Cohen-Macaulay of dimension $t$, gives an isomorphism $H_0(\psi):\Ext^{n-t}_R(\Ext^{n-t}_R(\Ext^{n-t}_R(M,\omega_R),\omega_R),\omega_R) \lra \Ext^{n-t}_R(M,\omega_R)$, and one can verify that 
\[
H_0(\psi) \circ H_0(\phi_\bullet^{**}) = H_0(\phi_\bullet) = {\rm id}_{\Ext^{n-t}_R(M,\omega_R)}.
\]

Note that $H_0(\psi)$ is the inverse of the isomorphism of \cite[Theorem 3.3.10 (c) (iii)]{BH93}, and this implies that $H_0(\phi^{**}_\bullet) = \Ext^{n-t}(\alpha)$ is the natural isomorphism of the same theorem.

To conclude the proof, it remains to be shown that $\alpha$ is a monomorphism. 

\noindent
Let $N = \Ext^{n-t}_R(\Ext^{n-t}_R(M,\omega_R),\omega_R)$. We show that $\alpha$ is injective once we localize at every associated prime of $N$ and then we are done, since, if $\ker(\alpha) \ne 0$, its associated primes would be contained in those of $N$.

  \noindent Let $\pp \in \Ass(N)$; since $N$ is Cohen-Macaulay of dimension $t$, we have that $\dim(R/\pp) = t$  and  $\dim(R_\pp) = n-t$. By replacing $R$ with $R_\pp$, $M$ with $M_\pp$, and $\omega_R$ with $(\omega_R)_\pp \cong \omega_{R_\pp}$, the proof will be complete once we show that $\alpha$ is injective in the case $t=0$. To this end observe that, as in Remark \ref{MmoduloM_1}, the short exact sequence $0 \to H^0_\m(M) \to M \to M/H^0_\m(M) \to 0$ and \cite[Theorem 3.3.10 (c) (iii)]{BH93} yield 
\[
\Ext^n_R(\Ext^n_R(M,\omega_R),\omega_R) \cong \Ext^n_R(\Ext^n_R(H^0_\m(M),\omega_R),\omega_R) \cong H^0_\m(M),
\]
and $\alpha$ composed with this isomorphism becomes just the inclusion of $H^0_\m(M)$ inside $M$.
\end{proof}

The equivalence between the first two conditions in the next theorem was announced in \cite{St96} without a proof, but citing a spectral sequence argument due to Peskine. Here we give another proof of this fact, see also \cite[Theorem 5.5]{Sch99} where the equivalence with the third condition is proved.

\begin{theorem}[Peskine]\label{peskine}
Let $R$ be a Cohen-Macaulay ring of dimension $n$ with canonical module $\omega_R$, and $M$ be a finitely generated $d$-dimensional $R$-module. Then, the following are equivalent:
\begin{enumerate}[(1)]
\item $M$ is sequentially Cohen-Macaulay;
\item $\Ext_R^{n-i}(M,\omega_R)$ is either $0$ or Cohen-Macaulay of dimension $i$ for all $i \in \{0,\ldots,d\}$;
\item $\Ext_R^{n-i}(M,\omega_R)$ is either $0$ or Cohen-Macaulay of dimension $i$ for all $i \in \{1,\ldots,d-1\}$.
\end{enumerate}
\end{theorem}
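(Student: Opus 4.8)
The plan is to prove the cycle $(1)\Rightarrow(2)\Rightarrow(3)\Rightarrow(1)$, with essentially all the content in the last implication. For $(1)\Rightarrow(2)$ I would simply quote Proposition \ref{fundaext}: if $0=M_0\subsetneq\cdots\subsetneq M_r=M$ is an sCM filtration with $d_i=\dim(M_i/M_{i-1})$, then $\Ext_R^{n-d_i}(M,\omega_R)$ is Cohen--Macaulay of dimension $d_i$ while $\Ext_R^{n-j}(M,\omega_R)=0$ for $j\notin\{d_1,\dots,d_r\}$; since the $d_i$ are distinct, for each fixed $i\in\{0,\dots,d\}$ exactly one of the two alternatives in (2) occurs. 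The implication $(2)\Rightarrow(3)$ is a trivial restriction of the range of indices.

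For $(3)\Rightarrow(1)$ I would argue by induction on $d-\depth M$. The base case $\depth M=d$ is the Cohen--Macaulay case, which is sCM. For the inductive step set $t=\depth M<d$ and look at the lowest non-vanishing deficiency module $\Ext_R^{n-t}(M,\omega_R)$, which is non-zero because it is dual to $H^t_\mm(M)\neq 0$. This module is Cohen--Macaulay of dimension $t$: when $t\ge 1$ this is hypothesis (3), and when $t=0$ it is automatic, since $\Ext_R^n(M,\omega_R)$ is dual to the finite length module $H^0_\mm(M)$, hence has finite length. This is exactly the hypothesis needed to apply Lemma \ref{monomorphism}, which produces a natural monomorphism $\alpha\colon N\to M$ with $N=\Ext_R^{n-t}(\Ext_R^{n-t}(M,\omega_R),\omega_R)$ Cohen--Macaulay of dimension $t$ and $\Ext^{n-t}(\alpha)$ an isomorphism. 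I set $M_1=\Im\alpha$, a Cohen--Macaulay submodule of $M$ of dimension $t$ (with $M_1=H^0_\mm(M)$ when $t=0$, cf.\ Remark \ref{MmoduloM_1}).

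The crucial step is to feed $Q=M/M_1$ back into the induction. Writing the long exact sequence of $\Ext_R^\bullet(-,\omega_R)$ attached to $0\to M_1\to M\to Q\to 0$ and using that the only non-zero deficiency module of $M_1$ sits in cohomological degree $n-t$, I obtain $\Ext_R^{n-i}(Q,\omega_R)\cong\Ext_R^{n-i}(M,\omega_R)$ for all $i\notin\{t-1,t\}$. The key point is that $\Ext^{n-t}$ of the inclusion $M_1\hookrightarrow M$ is identified with $\Ext^{n-t}(\alpha)$, an isomorphism by Lemma \ref{monomorphism}; this forces the connecting maps around degree $n-t$ to vanish, giving $\Ext_R^{n-i}(Q,\omega_R)=0$ for every $i\le t$. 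Hence $\depth Q\ge t+1$, while $\dim Q=d$ since $\Ext_R^{n-d}(Q,\omega_R)\cong\Ext_R^{n-d}(M,\omega_R)\ne 0$; moreover the deficiency modules of $Q$ in the range $1\le i\le d-1$ either vanish or coincide with those of $M$, so $Q$ again satisfies (3). As $d-\depth Q<d-\depth M$, induction yields that $Q$ is sequentially Cohen--Macaulay.

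Finally I would glue: by Proposition \ref{sullecoom} the smallest dimension occurring in any sCM filtration of $Q$ equals $\depth Q\ge t+1$, which is strictly larger than $\dim M_1=t$, so Example \ref{example sCM}(4) assembles an sCM filtration of $M$ from $M_1$ and that of $Q$. The main obstacle is this inductive step for $(3)\Rightarrow(1)$, and in particular the bookkeeping of the previous paragraph: one must induct on $d-\depth M$ rather than on $\dim M$, because peeling off $M_1$ does not lower the dimension, and the whole argument hinges on the isomorphism assertion in Lemma \ref{monomorphism}, which is precisely what kills the deficiency modules of $Q$ below degree $n-t$ and raises the depth. The two endpoints excluded from (3) are harmless: the case $i=0$ is automatic by finite length, and the case $i=d$ is never invoked since we always peel from the bottom — in fact the Cohen--Macaulayness of the top deficiency module only comes out a posteriori, via $(1)\Rightarrow(2)$.
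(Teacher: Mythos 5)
Your proposal is correct and takes essentially the same route as the paper's own proof: the same cycle of implications, the same induction on $d-\depth(M)$, the same use of Lemma \ref{monomorphism} to split off the Cohen--Macaulay submodule $M_1=\Im\alpha$, the same long exact sequence bookkeeping showing $\Ext^j_R(M/M_1,\omega_R)\cong\Ext^j_R(M,\omega_R)$ for $j\neq n-t$ while $\Ext^{n-t}_R(M/M_1,\omega_R)=0$, and the same gluing step via Example \ref{example sCM}(4). The only (harmless) differences are expository: you spell out explicitly that $M/M_1$ again satisfies (3) and that $\dim(M/M_1)=d$, which the paper leaves implicit.
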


\begin{proof} The implication (1) $\Rightarrow$ (2) follows at once by Proposition \ref{fundaext}, and clearly (2) implies (3). 

Now assume (3); we proceed by induction on $d-t$, where  $t=\depth(M)$. If $t=d$, then $M$ is Cohen-Macaulay, and hence sequentially Cohen-Macaulay. Assume that $t<d$. If $t=0$, then $0 \ne \Ext^n_R(M,\omega_R)$ has finite length, and hence it is Cohen-Macaulay. Either way, thanks to our assumption we have that $0 \ne \Ext^{n-t}_R(M,\omega_R)$ is $t$-dimensional and Cohen-Macaulay. By Lemma \ref{monomorphism}, there is an injective homomorphism $\alpha \,:\, \Ext_R^{n-t}(\Ext_R^{n-t}(M,\omega_R),\omega_R)\lra M$. Since the first module is $t$-dimensional Cohen-Macaulay by \cite[Theorem 3.3.10 (c)]{BH93}, then so is its image, say $M_1$, which is a submodule of $M$. It follows that $\depth (M_1)=\dim (M_1) =t=\depth(M)<d=\dim(M)$.

  Consider the short exact sequence $0\lra M_1\lra M\lra M/M_1\lra 0$ and  the induced sequence in cohomology obtained by applying the functor $\Hom_R(-,\omega_R)$. We then have isomorphisms $\Ext_R^j(M/M_1,\omega_R)\cong \Ext_R^j(M,\omega_R)$ for all $j\neq n-t,n-t+1$ and the exact sequence
\begin{equation*}
    0\to \Ext_R^{n-t}(M/M_1,\omega_R)\to \Ext_R^{n-t}(M,\omega_R)
  \stackrel{\beta}{\to } \Ext_R^{n-t}(M_1,\omega_R)\to  \Ext_R^{n-t+1} (M/M_1,\omega_R)
   \to  0.
%
\end{equation*}

\noindent
By Lemma \ref{monomorphism} we know that the map $\Ext^{n-t}(\alpha)$ is an isomorphism, and therefore $\beta$ is an isomorphism as well. It follows that $\Ext^j_R(M,\omega_R) \cong \Ext^j_R(M/M_1,\omega_R)$ for every $j \ne n-t$, while $\Ext^{n-t}_R(M/M_1,\omega_R)=0$.

\noindent
This shows in particular that $\depth(M/M_1) >t$; since $\dim(M/M_1) =d$,  we may apply induction and obtain that $M/M_1$ is sequentially Cohen-Macaulay. Let $0 = M_1/M_1 \subsetneq M_2/M_1 \subsetneq \ldots \subsetneq M_r/M_1 =M/M_1$ be a sCM filtration. Since $M_1$ is Cohen-Macaulay of dimension $t$ and $\depth(M_2/M_1) = \depth(M/M_1)>t$ by Example \ref{example sCM} (4) , we deduce that $0 = M_0 \subsetneq M_1 \subsetneq M_2 \subsetneq \ldots \subsetneq M_r=M$ is a sCM filtration and, thus $M$ is sequentially Cohen-Macaulay. 
\end{proof}

\begin{corollary}\label{sumviceversa} Let $R$ be Cohen-Macaulay with canonical module $\omega_R$, and  $M$, $N$ be finitely generated $R$-modules. Then, $M$ and $N$ are sequentially Cohen-Macaulay if and only if $M \oplus N$ is sequentially Cohen-Macaulay.
\end{corollary}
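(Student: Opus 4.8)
The plan is to use Peskine's characterization (Theorem \ref{peskine}), which reduces sequential Cohen-Macaulayness to a condition on the deficiency modules $\Ext_R^{n-i}(-,\omega_R)$. Since $\Ext$ commutes with finite direct sums in either variable, we have a natural isomorphism
\[
\Ext_R^{n-i}(M \oplus N, \omega_R) \cong \Ext_R^{n-i}(M,\omega_R) \oplus \Ext_R^{n-i}(N,\omega_R)
\]
for every $i$. So the whole proof rests on translating the condition ``is either $0$ or Cohen-Macaulay of dimension $i$'' across a direct sum.

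First I would record the easy observation that a finite direct sum of modules is Cohen-Macaulay if and only if each nonzero summand is Cohen-Macaulay of the same dimension; more precisely, for modules $A, B$ one has that $A \oplus B$ is either zero or Cohen-Macaulay of dimension $i$ if and only if each of $A$ and $B$ is either zero or Cohen-Macaulay of dimension $i$. This follows immediately from the local cohomology characterization of Cohen-Macaulayness recalled at the start of Section 1 (Grothendieck vanishing/non-vanishing), together with the fact that $H^j_\mm(A \oplus B) \cong H^j_\mm(A) \oplus H^j_\mm(B)$, and that $\dim(A \oplus B) = \max\{\dim A, \dim B\}$. The key point is that the grading by $i$ in the $\Ext$ modules forces the dimension of the nonzero deficiency module in cohomological degree $n-i$ to be exactly $i$, so there is no room for the two summands to have mismatched dimensions in the same degree.

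Putting these together, I would argue: apply the isomorphism above for each $i \in \{0,\ldots,d\}$, where $d = \dim(M \oplus N) = \max\{\dim M, \dim N\}$. By the direct-sum lemma, $\Ext_R^{n-i}(M \oplus N, \omega_R)$ is either $0$ or Cohen-Macaulay of dimension $i$ for all $i$ if and only if both $\Ext_R^{n-i}(M,\omega_R)$ and $\Ext_R^{n-i}(N,\omega_R)$ are, for all $i$. By Theorem \ref{peskine} (the equivalence of (1) and (2)), the left side is equivalent to $M \oplus N$ being sequentially Cohen-Macaulay, and the right side is equivalent to both $M$ and $N$ being sequentially Cohen-Macaulay. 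This yields the claimed equivalence in one stroke.

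I do not expect a serious obstacle here, since the corollary is essentially a formal consequence of Peskine's characterization and the additivity of $\Ext$. The only point requiring minor care is the direct-sum lemma for the Cohen-Macaulay property in a fixed cohomological degree, and in particular checking that dimensions cannot mismatch; but this is handled cleanly by the dimension bookkeeping built into condition (2) of Theorem \ref{peskine}, where the module in degree $n-i$ is required to have dimension precisely $i$. One could alternatively prove the corollary directly from the definition by splicing sCM filtrations, as in Example \ref{nuovo} (1), but going through the deficiency modules is shorter and immediately gives both directions at once.
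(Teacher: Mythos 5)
Your proof is correct and follows essentially the same route as the paper: the paper's own proof of Corollary \ref{sumviceversa} also runs both directions through Theorem \ref{peskine}, using $\Ext_R^{n-i}(M\oplus N,\omega_R)\cong \Ext_R^{n-i}(M,\omega_R)\oplus\Ext_R^{n-i}(N,\omega_R)$ together with the observation that a direct sum is zero or Cohen-Macaulay of a given dimension if and only if each summand is. Your extra care about dimension bookkeeping (via Grothendieck vanishing/non-vanishing and $\dim(A\oplus B)=\max\{\dim A,\dim B\}$) is a fine way to justify that observation, which the paper leaves to the reader.
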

\begin{proof}
Let $n=\dim R$. We have already showed in Example \ref{nuovo} (1) that, if $M$ and $N$ are sequentially Cohen-Macaulay, then so is $M \oplus N$. This also follows immediately from Theorem \ref{peskine}, since if $\Ext^{n-i}_R(M,\omega_R)$ and $\Ext^{n-i}_R(N,\omega_R)$ is either zero or Cohen-Macaulay of dimension $i$, then so is $\Ext^{n-i}_R(M,\omega_R) \oplus \Ext^{n-i}_R(N,\omega_R)  \cong \Ext^{n-i}_R(M \oplus N,\omega_R)$. For the converse, it suffices to observe that if the direct sum of two modules is zero or Cohen-Macaulay of a given dimension, then so is each of its  summand.
\end{proof}

\begin{remark}\label{loca}
We remark that sequential Cohen-Macaulayness behaves well with respect to localization; see for instance \cite[Proposition 4.7]{CN03} or \cite[Proposition 2.6]{CGT13}. For any sequentially Cohen-Macaulay $R$-module $M$ and $\pp\in \Supp(M)$ one has that $M_\pp$ is a sequentially Cohen-Macaulay $R_\pp$-module and, in fact, one can recover its dimension filtration from that of $M$. Let $s=\dim (R/\pp)$ and consider the quotients $\delta_i(M)/\delta_{i-1}(M)$ of the dimension filtration of $M$. If not zero, they are $i$-dimensional Cohen-Macaulay, and their localization is either zero or Cohen-Macaulay of dimension $i-s$. Now let $N_i=(\delta_{i+s}(M))_\pp$ for all $i \geq 0$ such that $i+s\leq d=\dim(M)$, i.e., for $i=0,\ldots,d-s$, and observe that $0\subseteq N_0 \subseteq \ldots \subseteq N_{d-s}=M_\pp$ is a CM filtration of $M_\pp$, with quotients $N_i/N_{i-1}\simeq (\delta_{i+s}(M)/\delta_{i+s-1}(M))_\pp$. If $R$ is Cohen-Macaulay, the fact that $M_\pp$ is sequentially Cohen-Macaulay for all $\pp \in \Supp(M)$ is also a consequence of Theorem \ref{peskine}. See \cite{TPDA18} for other results about localization and sequentially Cohen-Macaulay modules.
\end{remark}

We now recall some definitions needed to state the next result, and that we will use frequently in the next sections. Given an $R$-module $N$, we let $\Ass^\circ(N) = \Ass(N) \smallsetminus \{\m\}$.

\begin{definition}\label{filterstrictlyfilter} Let $M\neq 0$ be a finitely generated graded $R$-module.
\begin{enumerate}[(1)]
\item A homogeneous element $0 \ne y \in \m$ is {\em filter regular} for $M$ if $y \notin \bigcup_{\pp \in \Ass^\circ(M)} \pp$. 

  \noindent
  A sequence of homogeneous elements $y_1,\ldots,y_t \in \m$ is a {\em filter regular sequence} for $M$ if $y_{i+1}$ is a filter regular element for $M/(y_1,\ldots,y_i)M$ for all $i \in \{0,\ldots,t-1\}$.
\item A homogeneous element $0 \ne y \in \m$ is {\em strictly filter regular} for $M$ if $y \notin\bigcup_{\pp \in \Ass^\circ(X(M))} \pp$, where $X(M) = \bigoplus_{i \in \NN} \Ext^i_R(M,R)$.

    \noindent A sequence of homogeneous elements $y_1,\ldots,y_t \in \m$ is a {\em strictly filter regular sequence} for $M$ if $y_{i+1}$ is a strictly filter regular element for $M/(y_1,\ldots,y_i)M$ for all $i \in \{0,\ldots,t-1\}$.
\end{enumerate}
\end{definition}

\begin{remark} \label{remark SF -> F} Regular sequences are clearly filter regular sequences. Moreover, strictly filter regular sequences are filter regular by the graded version of \cite[Corollary 11.3.3]{BSxx}. When the field $k$ is infinite, by Prime Avoidance any sequence of general forms is strictly filter regular.
\end{remark}

We conclude this section with the following result, which clarifies how sequential Cohen-Macaulayness behaves with respect to quotients, cf. Example \ref{regulare}; it will also be useful later on.

\begin{proposition} \label{hscor1.9}
  Let $R$ be a Cohen-Macaulay ring of dimension $n$ with canonical module $\omega_R$; let $M$ be a $d$-dimensional finitely generated $R$-module, and $x \in R$ a strictly filter regular element for $M$. Then,
  \begin{enumerate}[(1)]
  \item  If $M$ is sequentially Cohen-Macaulay, then $M/xM$ is sequentially Cohen-Macaulay.
  \item The converse holds if $x$ is regular for all non-zero $\Ext^{n-i}_R(M,\omega_R)$ with $i>0$.
  \end{enumerate}
\end{proposition}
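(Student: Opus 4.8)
The plan is to describe the deficiency modules of $M/xM$ in terms of those of $M$ by tracking multiplication by $x$ through local cohomology, and then to read off sequential Cohen-Macaulayness from Peskine's criterion (Theorem \ref{peskine}). The one structural fact I will use about strict filter regularity is that $x$ avoids every prime in $\Ass^{\circ}(\Ext^{n-i}_R(M,\omega_R))$ for all $i$, i.e.\ $x$ is filter regular on each deficiency module; away from $\mm$ the associated primes of $\Ext^{\bullet}_R(M,R)$ and of $\Ext^{\bullet}_R(M,\omega_R)$ coincide, because $R$ is Cohen-Macaulay with canonical module (equivalently, after the reduction to $R$ a polynomial ring used in the proof of Lemma \ref{monomorphism}, where $\omega_R\cong R(-n)$, the two families of $\Ext$ modules differ only by a degree shift). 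Setting $K=(0:_M x)$, which has finite length since $x$ is filter regular on $M$, I split $0\to K\to M\xrightarrow{x}M\to M/xM\to 0$ through $L=xM\cong M/K$. As $H^j_\mm(K)=0$ for $j\ge 1$, the sequence $0\to K\to M\to L\to 0$ gives $H^j_\mm(L)\cong H^j_\mm(M)$ for $j\ge 1$, with the isomorphism induced by multiplication by $x$; substituting into the long exact sequence of $0\to L\to M\to M/xM\to 0$ turns the connecting maps into multiplication by $x$ on $H^j_\mm(M)$, yielding for every $j\ge 1$
$$0\to H^j_\mm(M)/xH^j_\mm(M)\to H^j_\mm(M/xM)\to \bigl(0:_{H^{j+1}_\mm(M)}x\bigr)\to 0 .$$
Applying graded local duality (the Matlis dual, \cite[Theorem 3.6.19]{BH93}), which is exact and interchanges $(-)/x(-)$ with $0:_{(-)}x$, this becomes, for all $i\ge 1$,
$$0\to \frac{\Ext^{n-i-1}_R(M,\omega_R)}{x\,\Ext^{n-i-1}_R(M,\omega_R)}\to \Ext^{n-i}_R(M/xM,\omega_R)\to \bigl(0:_{\Ext^{n-i}_R(M,\omega_R)}x\bigr)\to 0 .$$

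For Part (1), assume $M$ is sequentially Cohen-Macaulay, so by Theorem \ref{peskine} each $\Ext^{n-i}_R(M,\omega_R)$ is zero or Cohen-Macaulay of dimension $i$. For $i\ge 1$ a nonzero such module has all associated primes of dimension $i\ge 1$, hence distinct from $\mm$, so filter regularity makes $x$ a nonzerodivisor on it; thus $0:_{\Ext^{n-i}_R(M,\omega_R)}x=0$ and the displayed sequence gives $\Ext^{n-i}_R(M/xM,\omega_R)\cong \Ext^{n-i-1}_R(M,\omega_R)/x\,\Ext^{n-i-1}_R(M,\omega_R)$, which is zero or Cohen-Macaulay of dimension $i$ (a nonzerodivisor on a Cohen-Macaulay module of dimension $i+1$ drops both depth and dimension by one). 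For $i=0$ the module $\Ext^{n}_R(M/xM,\omega_R)$ has finite length, hence is zero or Cohen-Macaulay of dimension $0$. Since $\dim(M/xM)=d-1$, Theorem \ref{peskine} shows $M/xM$ is sequentially Cohen-Macaulay.

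For Part (2), assume $M/xM$ is sequentially Cohen-Macaulay and that $x$ is a nonzerodivisor on every nonzero $\Ext^{n-i}_R(M,\omega_R)$ with $i>0$. Then $0:_{\Ext^{n-i}_R(M,\omega_R)}x=0$ for $i\ge 1$, so the displayed sequence collapses to $\Ext^{n-i}_R(M/xM,\omega_R)\cong \Ext^{n-i-1}_R(M,\omega_R)/x\,\Ext^{n-i-1}_R(M,\omega_R)$. By Theorem \ref{peskine} it suffices to prove $\Ext^{n-i}_R(M,\omega_R)$ is zero or Cohen-Macaulay of dimension $i$ for $i\in\{1,\dots,d-1\}$. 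For $i=1$ this is automatic: $\Ext^{n-1}_R(M,\omega_R)$ has dimension at most $1$ by the standard dimension bound on deficiency modules, while carrying the nonzerodivisor $x$ forces depth $\ge 1$ when nonzero, so it is zero or Cohen-Macaulay of dimension $1$. For $2\le i\le d-1$, the isomorphism at index $i-1$ reads $\Ext^{n-i+1}_R(M/xM,\omega_R)\cong \Ext^{n-i}_R(M,\omega_R)/x\,\Ext^{n-i}_R(M,\omega_R)$; since $M/xM$ is sequentially Cohen-Macaulay the left side is zero or Cohen-Macaulay of dimension $i-1$. If it is zero, Nakayama forces $\Ext^{n-i}_R(M,\omega_R)=0$; if it is Cohen-Macaulay of dimension $i-1$, then, $x$ being a nonzerodivisor, the relation $\depth=\depth(\cdot/x\cdot)+1=\dim$ makes $\Ext^{n-i}_R(M,\omega_R)$ Cohen-Macaulay of dimension $i$. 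Hence $M$ is sequentially Cohen-Macaulay.

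The main obstacle is the careful bookkeeping behind the first display: identifying the connecting homomorphisms with multiplication by $x$ requires a functoriality chase matching the isomorphism $H^j_\mm(L)\cong H^j_\mm(M)$ (induced by $M\xrightarrow{x}L$) with the inclusion $L\hookrightarrow M$, whose composite is exactly $x$ on $M$. A secondary point that must be stated cleanly, rather than glossed over, is that strict filter regularity indeed supplies filter regularity on the modules $\Ext^{n-i}_R(M,\omega_R)$; this is where the hypothesis that $R$ is Cohen-Macaulay with canonical module is genuinely used, and it is what lets me pass from the $\Ext_R(-,R)$ in the definition to the $\Ext_R(-,\omega_R)$ appearing in Peskine's criterion.
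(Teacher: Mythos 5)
Your argument is correct and is essentially the paper's own proof transported to the Matlis-dual side: the paper applies $\Hom_R(-,\omega_R)$ directly to $0 \to M/(0:_M x) \stackrel{\cdot x}{\longrightarrow} M \to M/xM \to 0$, using that $0:_M x$ has finite length, and the resulting short exact sequences $0 \to \Ext^{n-i-1}_R(M,\omega_R)/x\Ext^{n-i-1}_R(M,\omega_R) \to \Ext^{n-i}_R(M/xM,\omega_R) \to 0:_{\Ext^{n-i}_R(M,\omega_R)}x \to 0$ are precisely the duals of your local cohomology sequences, after which both proofs run the same two applications of Theorem \ref{peskine}. The only practical difference is that working with $\Ext$ from the start spares the paper the connecting-map functoriality check you single out as the main obstacle, while the bridge from strict filter regularity (defined via $\Ext^\bullet_R(M,R)$) to regularity on the positive-dimensional modules $\Ext^{n-i}_R(M,\omega_R)$ is asserted there just as tersely as you assert it.
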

\begin{proof}
Assume that $M$ is sequentially Cohen-Macaulay.       Since $x$ is strictly filter regular, it is filter regular by Remark \ref{remark SF -> F},  $L=0:_M x$ has finite length and $\Ext^{n-i}_R(\overline{M},\omega_R) \cong \Ext^{n-i}_R(M,\omega_R)$ for all $i>0$, where $\overline{M}=M/L$.  Then, the long exact sequence obtained by applying the functor $\Hom_R(-,\omega_R)$ to the short exact sequence $0 \lra \overline{M} \stackrel{\cdot x}{\lra} M \lra M/xM \lra 0$ gives  a long exact sequence 

\begin{equation*}
  \begin{split}
\ldots& \stackrel{\cdot x}\lra \Ext^{n-i}_R(M,\omega_R) \lra \Ext^{n-(i-1)}_R(M/xM,\omega_R) \lra \Ext^{n-(i-1)}_R(M,\omega_R) \to \ldots \\
\ldots& \stackrel{\cdot x}{\lra}  \Ext^{n-1}_R(M,\omega_R) \lra  \Ext^n_R(M/xM,\omega_R) \lra \Ext^n_R(M,\omega_R) \lra 0.    
  \end{split}
\end{equation*}
By Theorem \ref{peskine} we have that      each non-zero $\Ext^{n-i}_R(M,\omega_R)$ is Cohen-Macaulay of dimension $i$, and in this case $x$ is $\Ext^{n-i}_R(M,\omega_R)$-regular when $i>0$.  For $i>1$ we then have short exact sequences 
\[
0 \lra \Ext^{n-i}_R(M,\omega_R) \stackrel{\cdot x}{\lra} \Ext^{n-i}_R(M,\omega_R) \lra \Ext^{n-(i-1)}_R(M/xM,\omega_R) \lra 0,
\]
and it follows that $\Ext^{n-(i-1)}_R(M/xM,\omega_R) \cong \Ext^{n-i}_R(M,\omega_R) \otimes_R R/(x)$ is Cohen-Macaulay of dimension $i-1$ for all $i>1$. We conclude that $M/xM$ is sequentially Cohen-Macaulay using the implication (3) $\Rightarrow$ (1) of Theorem \ref{peskine}.

For the converse, the fact that $x$ is regular for all non-zero $\Ext^{n-i}_R(M,\omega_R)$ with $i>0$ shows that the above long exact sequence of $\Ext$ modules breaks into short exact sequences 
\[
0 \lra \Ext^{n-i}_R(M,\omega_R) \stackrel{\cdot x}{\lra} \Ext^{n-i}_R(M,\omega_R) \lra \Ext^{n-(i-1)}_R(M/xM,\omega_R) \lra 0.
\]
for all $i>1$. By Theorem \ref{peskine} we have that $\Ext^{n-(i-1)}_R(M/xM,\omega_R)$ is either zero or Cohen-Macaulay of dimension $i-1$, and thus $\Ext^{n-i}_R(M,\omega_R)$ is either zero or Cohen-Macaulay of dimension $i$ for all $i>1$. Since $x$ is assumed to be regular on $\Ext^{n-1}_R(M,\omega_R)$, and $\dim(\Ext^{n-1}_R(M,\omega_R)) \leq 1$, we have that $\Ext^{n-1}_R(M,\omega_R)$ is either zero, or Cohen-Macaulay of dimension $1$. It follows  again from the implication (3) $\Rightarrow$ (1) of Theorem \ref{peskine} that $M$ is sequentially Cohen-Macaulay.
\end{proof}

\begin{remark}\label{parsys}
  There are many other interesting results about  sequentially Cohen-Macaulay modules and their characterizations which do not find space in this note. For instance, 
  in \cite[Theorem 5.1]{CC07} it is proven that a module is sequentially Cohen-Macaulay if and only if each module of its dimension filtration is pseudo Cohen-Macaulay. In \cite[Theorem 1.1]{CL09}, the sequential Cohen-Macaulayness of $M$ is characterized in terms of the existence of one good system of parameters of $M$ which has the property of parametric decomposition; see also Theorems 3.9 and 4.2 in \cite{CC07} for other characterizations which involve good systems of parameters and $dd$-sequences.

Moreover, in \cite{CGT13} it is investigated how the sequential Cohen-Macaulay property behaves in relation to taking associated graded rings and Rees algebras, see also  \cite{TPDA17} for more results of this type.
\end{remark}

\section{Partially sequentially Cohen-Macaulay modules}
We are going to study next the notion of partially sequentially Cohen-Macaulay module, as introduced in \cite{SS17}, which naturally generalizes that of sequentially Cohen-Macaulay modules. Thanks to Schenzel's Theorem \ref{thm sCM and CMf}, the definition can be given in terms of the dimension filtration of the module. Throughout this section we let $R=k[x_1,\dots, x_n]$ be a standard graded polynomial ring over an infinite field $k$ with homogeneous maximal ideal $\mm=(x_1, \dots, x_n)$. Recall that, in this case, $R$ has a graded canonical module $\omega_R \cong R(-n)$. We consider finitely generated graded $R$-modules $M$; when $M=0$, we set $\depth(M)=+\infty$ and $\dim(M)=-1$, as usual.  We let $d=\dim(M)$.

\begin{definition} \label{Def i-sCM} Let $i\in\{0,\ldots,d\}$ and let $\{\delta_j(M)\}_j$ be the dimension filtration of $M$; $M$ is called {\it $i$-partially sequentially Cohen-Macaulay}, $i$-sCM for short, if $\delta_j(M)/\delta_{j-1}(M)$ is either zero or Cohen-Macaulay for all $i \leq j \leq d$. 
  \end{definition}

  \noindent
  By definition and Corollary \ref{coroll H0}, a module $M$ is $0$-sCM if and only if $M$ sequentially Cohen-Macaulay  if and only if $M$ is $1$-sCM.

  \begin{example}\label{Example i-sCM}
    \begin{enumerate}[(1)] 
\item Let $M$ be a sequentially Cohen-Macaulay module. The simplest way of constructing an $i$-sCM module which is not sequentially Cohen-Macaulay, is perhaps taking a non-sequentially Cohen-Macaulay module $N$ of dimension strictly smaller than $i$, and condider  their direct sum $M \oplus N$, cf. Example \ref{nuovo} (1).

  \item Let $M=R/I$, where $I=(x_1) \cap (x_2,x_3) \cap (x_1^2,x_4,x_5) \subset R=k[x_1,x_2,x_3,x_4,x_5]$. With the help of Proposition \ref{sche02}, we can construct the dimension filtration $0=\delta_{-1}=\delta_0 = \delta_1 \subseteq  \delta_2=((x_1) \cap (x_2,x_3))/I \subseteq \delta_3=(x_1)/I \subseteq \delta_4=R/I$ of $M$. Then,  $\delta_4/\delta_3$ and $\delta_3/\delta_2$ are Cohen-Macaulay of dimension $4$ and $3$ respectively, but $0\neq \delta_2/\delta_1$ is not Cohen-Macaulay. Hence, $M$ is an example of a $3$-sCM which is not  $2$-sCM.    
  \end{enumerate}

\end{example}

\begin{remark}\label{remark jsCM-0}
Observe that $M$ is $i$-sCM if and only if $M/\delta_{i-1}(M)$ is sequentially Cohen-Macaulay. This follows at once  recalling that the dimension filtration $\{\gamma_j\}_j$ of $M/\delta_{i-1}(M)$ is such that  $\gamma_j = \delta_j(M)/\delta_{i-1}(M)$ for $j \geq i$ and $\gamma_j=0$ otherwise. Notice that, since  $\gamma_j=0$ for all $j \leq i-1$, if $M$ is $i$-sCM then  $H^j_\m(M/\delta_{i-1}(M)) = 0$ for all $j \leq i-1$, by Proposition \ref{sullecoom}.
\end{remark}
Given a graded free presentation of $M \cong F/U$, we  denote by $\{e_1, \ldots, e_r\}$  a graded basis of $F$. We consider $R$ together with the pure reverse lexicographic ordering $>$ such that $x_1 > \ldots > x_n $; recall that $>$ is not a monomial order on $R$, but by definition it agrees with the reverse lexicographic order that refines it on monomials of the same degree. 

  We extend $>$ to $F$ in the following way: given monomials $ue_i$ and $ve_j$ of $F$, set
 \begin{equation*}
 \begin{split} ue_i>ve_j \text{ if }& \big(\deg(ue_i)>\deg(ve_j)\big), \text{ or } \big(\deg(ue_i)=\deg(ve_j) \text{ and } u>v\big)\\
  \text{ or }& \big(\deg(ue_i)=\deg(ve_j),\,\,u=v \text{ and } i<j\big). 
 \end{split}
\end{equation*}
We shall consider this order until the end of the section, and denote by $\Gin(U)$ the generic initial module of $U$ with respect to $>$. Since the action of ${\rm GL}_n(k)$ on  $R$ as change of coordinates can be extended in an obvious way to $F$,\, $\Gin(U)$ simply results to be the initial submodule ${\rm in}_>(gU)$ where $g$ is a general change of coordinates.
 
We prove next some preliminary facts which are needed later on. Given a graded submodule $V \subseteq F$ with $\dim(F/V)=d$, for all $j\in \{-1,\ldots,d\}$ we denote by $V^{\langle j \rangle}$ the $R$-module such that  $V^{\langle j \rangle}/V=\delta_j(F/V)$. Several results contained in the next two lemmata can be found in \cite{Go16}, where they are proved in the ideal case.

\begin{lemma}\label{Afshin}
With the above notation,
\begin{enumerate}[(1)]
\item $\Gin(U^{\langle j \rangle}) \subseteq \Gin(U)^{\langle j \rangle}$;
\item $U^{\langle j \rangle}=(U^{\langle j \rangle})^{\langle j \rangle}$;
\item if $V$ is a graded submodule of $F$ such that  $U \subseteq V$, then $U^{\langle j \rangle} \subseteq V^{\langle j \rangle}$;
\item $\Gin(U^{\langle j \rangle})^{\langle j \rangle}=\Gin(U)^{\langle j \rangle}$.
\end{enumerate}
\end{lemma}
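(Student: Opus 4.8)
The plan is to prove the four statements in the order (2), (3), (1), (4), since the last one will follow formally from the first three together with the monotonicity of the generic initial module operation under inclusions.

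For (2), I would invoke the general identity $\delta_j(N/\delta_j(N))=0$, valid for any finitely generated module $N$. This is the case $i=j$ of the equality $\delta_j(N/\delta_i(N))=\delta_j(N)/\delta_i(N)$ for $j\ge i$ used in the proof of Corollary \ref{coroll H0}, and it can also be read off directly from the maximality defining $\delta_j$: the preimage in $N$ of any dimension-$\le j$ submodule of $N/\delta_j(N)$ has dimension $\le j$, hence is contained in $\delta_j(N)$. Applying this to $N=F/U$ and recalling $F/U^{\langle j\rangle}\cong (F/U)/\delta_j(F/U)$ gives $\delta_j(F/U^{\langle j\rangle})=0$, which is exactly $(U^{\langle j\rangle})^{\langle j\rangle}=U^{\langle j\rangle}$.

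For (3), given $U\subseteq V$ I would push the dimension filtration forward along the canonical surjection $\pi\colon F/U\twoheadrightarrow F/V$. The image $\pi(\delta_j(F/U))$ has dimension $\le j$, so by the maximality defining $\delta_j(F/V)$ it is contained in $\delta_j(F/V)$. Unwinding $\delta_j(F/U)=U^{\langle j\rangle}/U$ and $\delta_j(F/V)=V^{\langle j\rangle}/V$ turns this into $(U^{\langle j\rangle}+V)/V\subseteq V^{\langle j\rangle}/V$, that is $U^{\langle j\rangle}\subseteq V^{\langle j\rangle}$. The heart of the lemma is (1), where the key input is that passing to $\Gin$ preserves Hilbert functions. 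First, from $U\subseteq U^{\langle j\rangle}$, and the fact that a single general change of coordinates can be chosen to compute both $\Gin(U)$ and $\Gin(U^{\langle j\rangle})$ (possible because $k$ is infinite, so the two relevant nonempty Zariski-open subsets of $\GL_n(k)$ meet), I obtain $\Gin(U)\subseteq \Gin(U^{\langle j\rangle})$. Then I would compare Hilbert functions: since $\HF(\Gin(W))=\HF(W)$ for every graded submodule $W\subseteq F$, the quotient $\Gin(U^{\langle j\rangle})/\Gin(U)$ has the same Hilbert function as $U^{\langle j\rangle}/U=\delta_j(F/U)$, hence the same dimension, which is $\le j$. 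By the maximality defining $\delta_j(F/\Gin(U))=\Gin(U)^{\langle j\rangle}/\Gin(U)$, the dimension-$\le j$ submodule $\Gin(U^{\langle j\rangle})/\Gin(U)$ is contained in it, which is precisely $\Gin(U^{\langle j\rangle})\subseteq \Gin(U)^{\langle j\rangle}$.

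Finally, (4) is formal: applying the monotone operation $(-)^{\langle j\rangle}$ from (3) to the inclusion of (1) gives $\Gin(U^{\langle j\rangle})^{\langle j\rangle}\subseteq (\Gin(U)^{\langle j\rangle})^{\langle j\rangle}=\Gin(U)^{\langle j\rangle}$, where the last equality is (2) applied to $\Gin(U)$; conversely, $\Gin(U)\subseteq \Gin(U^{\langle j\rangle})$ together with (3) yields $\Gin(U)^{\langle j\rangle}\subseteq \Gin(U^{\langle j\rangle})^{\langle j\rangle}$, and the two inclusions give equality. I expect the only genuinely delicate point to be the bookkeeping in (1), specifically the step ``same Hilbert function $\Rightarrow$ same dimension'', which I would justify by recalling that the dimension of a finitely generated graded module equals $1+\deg$ of its Hilbert polynomial, so it is determined by the Hilbert function; the remaining arguments are pure applications of maximality and monotonicity.
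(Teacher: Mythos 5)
Your proposal is correct and follows essentially the same route as the paper: part (1) via preservation of Hilbert functions under $\Gin$ together with the maximality defining $\delta_j(F/\Gin(U))$, parts (2) and (3) via the fact that extensions and quotients of modules of dimension at most $j$ again have dimension at most $j$, and part (4) formally from the first three using monotonicity of both $\Gin$ and $(-)^{\langle j \rangle}$. The only difference is cosmetic: you make explicit two points the paper leaves implicit, namely the choice of a common generic change of coordinates giving $\Gin(U) \subseteq \Gin(U^{\langle j\rangle})$, and the justification that equal Hilbert functions force equal dimensions.
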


\begin{proof}
(1) Notice that  $\Gin(U^{\langle j \rangle})/\Gin(U)$ and $U^{\langle j \rangle}/U$ have the same Hilbert series, hence the same dimension, which is less than or equal to $j$. Since $\Gin(U)^{\langle j \rangle}/\Gin(U)=\delta_j(F/\Gin(U))$, we have the desired inclusion. 

\smallskip
\noindent
(2) Since $U \subseteq U^{\langle j \rangle}$, one inclusion is clear. Now consider the short exact sequence
\[
0 \lra U^{\langle j \rangle}/U \lra (U^{\langle j \rangle})^{\langle j \rangle}/U \lra (U^{\langle j \rangle})^{\langle j \rangle}/U^{\langle j \rangle} \lra 0.
\]
Since the dimensions of $U^{\langle j \rangle}/U$ and $(U^{\langle j \rangle})^{\langle j \rangle}/U^{\langle j \rangle}$ are less than or equal to $j$, it follows that also $\dim((U^{\langle j \rangle})^{\langle j \rangle}/U) \leq j$ and,  hence, $(U^{\langle j \rangle})^{\langle j \rangle} \subseteq U^{\langle j \rangle}$.

\smallskip
\noindent
(3) Since $U \subseteq U^{\langle j\rangle}\cap V$, we have that $\dim((U^{ \langle j \rangle}+V)/V) \leq \dim(U^{\langle j \rangle}/U) \leq j$, which implies $U^{ \langle j \rangle}\subseteq U^{\langle j \rangle}+V \subseteq V^{\langle j \rangle}$.

\smallskip
\noindent
(4)  Since $U \subseteq U^{\langle j \rangle}$, it immediately follows from (1) that $\Gin(U) \subseteq \Gin(U^{\langle j \rangle}) \subseteq \Gin(U)^{\langle j \rangle}$ and, accordingly, 
$\Gin(U)^{\langle j \rangle} \subseteq \Gin(U^{\langle j \rangle})^{\langle j \rangle}$. On the other hand, by Parts (1) and (2), the latter is contained in  $(\Gin(U)^{\langle j \rangle})^{\langle j \rangle}=\Gin(U)^{\langle j \rangle}$.
\end{proof}

We denote the Hilbert series of a graded $R$-module $N$ by $\Hilb(N)=\Hilb(N,z)$. We also let $h^j(N) = \Hilb(H^j_\m(N))$.

\begin{lemma}\label{lemma SbSt}
Let $M \cong F/U$, with dimension filtration $\{\delta_j\}_j$; then,  the following holds:
\begin{enumerate}[(1)]
\item $M$ is $i$-sCM if and only if $M/H^{0}_\mm(M)$ is $i$-sCM.
\item If $M$ is $i$-sCM, then $H^j_\mm (M) \cong H^j_\mm(\delta_j) \cong H^j_\mm (\delta_j/\delta_{j-1})$ for all $j \geq i$.
\item If $M$ is $i$-sCM, then $(z-1)^jh^j(M)=(1-z)^j\Hilb(\delta_j/\delta_{j-1})$ for all $j \geq i$.
\item Let $\{\gamma_j\}_j$ be the dimension filtration of $F/\Gin(U)$; if $M$ is $i$-sCM, then $\Hilb(\delta_j/\delta_{j-1})=\Hilb(\gamma_j/\gamma_{j-1})$ for all $j \geq i$. 
\end{enumerate}
\end{lemma}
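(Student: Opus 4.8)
The plan is to prove the four parts in order, each feeding into the next. For Part (1), I would exploit the description of the dimension filtration of a quotient recorded in Corollary \ref{coroll H0}. Since $\delta_0(M)=H^0_\m(M)$, the module $N:=M/H^0_\m(M)$ satisfies $\delta_j(N)=\delta_j(M)/\delta_0(M)$ for all $j\geq 0$, so $\delta_0(N)=0$ and $\delta_j(N)/\delta_{j-1}(N)\cong \delta_j/\delta_{j-1}$ for every $j\geq 1$. Hence for $i\geq 1$ the quotients governing $i$-sCMness of $M$ and of $N$ literally coincide, and the equivalence is immediate. The case $i=0$ then follows by applying the remark after Definition \ref{Def i-sCM} (``$0$-sCM $=$ $1$-sCM'') to both $M$ and $N$ and invoking the $i=1$ case.

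For Part (2), recall from Remark \ref{remark jsCM-0} that $M$ being $i$-sCM means $\overline M:=M/\delta_{i-1}(M)$ is sequentially Cohen--Macaulay, with dimension filtration quotients $\delta_j(\overline M)/\delta_{j-1}(\overline M)\cong \delta_j/\delta_{j-1}$ for $j\geq i$. Since $\dim\delta_{i-1}(M)\leq i-1$, the outer terms $H^j_\m(\delta_{i-1}(M))$ and $H^{j+1}_\m(\delta_{i-1}(M))$ vanish for all $j\geq i$. Thus the long exact sequences attached to $0\to\delta_{i-1}(M)\to M\to\overline M\to 0$ and to $0\to\delta_{i-1}(M)\to\delta_j(M)\to\delta_j(\overline M)\to 0$ yield $H^j_\m(M)\cong H^j_\m(\overline M)$ and $H^j_\m(\delta_j)\cong H^j_\m(\delta_j(\overline M))$ for $j\geq i$. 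Feeding these into Proposition \ref{sullecoom}, applied to the sequentially Cohen--Macaulay module $\overline M$, produces the claimed chain $H^j_\m(M)\cong H^j_\m(\delta_j)\cong H^j_\m(\delta_j/\delta_{j-1})$.

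Part (3) is then a translation of Part (2) into Hilbert series. For $j\geq i$ the module $N:=\delta_j/\delta_{j-1}$ is either zero or Cohen--Macaulay of dimension $j$. In the nonzero case, graded local duality over $R$ (with $\omega_R\cong R(-n)$) identifies $H^j_\m(N)$ with the graded Matlis dual of $\omega_N=\Ext^{n-j}_R(N,\omega_R)$, and the functional equation $\Hilb(\omega_N,z)=(-1)^j\Hilb(N,z^{-1})$ valid for Cohen--Macaulay modules gives $h^j(N)=(-1)^j\Hilb(N)$. Combining this with the isomorphism $H^j_\m(M)\cong H^j_\m(N)$ from Part (2) and multiplying by $(z-1)^j$ yields $(z-1)^jh^j(M)=(-1)^j(z-1)^j\Hilb(\delta_j/\delta_{j-1})=(1-z)^j\Hilb(\delta_j/\delta_{j-1})$; when $N=0$ both sides vanish.

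Part (4) is where the real work lies, and it is the step I expect to be the main obstacle. The plan is to route the comparison through the auxiliary sequentially Cohen--Macaulay module $\overline M=F/U^{\langle i-1\rangle}$. Applying Theorem \ref{HS Intro} to $\overline M$ gives $h^j(F/U^{\langle i-1\rangle})=h^j(F/\Gin(U^{\langle i-1\rangle}))$ for all $j$. Next, exactly as in the proof of Lemma \ref{Afshin}(1), the quotients $U^{\langle i-1\rangle}/U=\delta_{i-1}(M)$ and $\Gin(U^{\langle i-1\rangle})/\Gin(U)$ have equal Hilbert series and hence dimension $\leq i-1$; the attached long exact cohomology sequences then force $h^j(F/U)=h^j(F/U^{\langle i-1\rangle})$ and $h^j(F/\Gin(U))=h^j(F/\Gin(U^{\langle i-1\rangle}))$ for $j\geq i$. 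Chaining the three equalities gives $h^j(F/U)=h^j(F/\Gin(U))$ for all $j\geq i$. Finally, $F/\Gin(U)$ is sequentially Cohen--Macaulay: writing the monomial submodule as $\Gin(U)=\bigoplus_\ell J_\ell e_\ell$ with each $J_\ell$ Borel-fixed, hence weakly stable, one has $F/\Gin(U)\cong\bigoplus_\ell (R/J_\ell)(-\deg e_\ell)$, a direct sum of sequentially Cohen--Macaulay modules by Proposition \ref{wsscm} and Corollary \ref{sumviceversa}. Thus Part (3) applies to $F/\Gin(U)$ and gives $\Hilb(\gamma_j/\gamma_{j-1})=(-1)^jh^j(F/\Gin(U))$, which compared with the same identity for $M$ yields $\Hilb(\delta_j/\delta_{j-1})=\Hilb(\gamma_j/\gamma_{j-1})$ for $j\geq i$. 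The crux is the middle step transporting equality of local cohomology Hilbert functions from $F/U$ to $F/\Gin(U)$, where the full strength of the Herzog--Sbarra theorem on $\overline M$, rather than any elementary estimate, is what makes the argument go through.
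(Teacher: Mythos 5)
Your Parts (1) and (2) follow the paper's own argument in all essentials, and your Part (3) is a correct minor variant: where the paper cuts $\delta_j/\delta_{j-1}$ down by a maximal regular sequence of linear forms and compares Hilbert series of hyperplane sections, you invoke graded local duality and the functional equation $h^j(N)=(-1)^j\Hilb(N)$ for a $j$-dimensional Cohen--Macaulay module $N$. Both are standard and equally short.

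Part (4) is where you genuinely diverge, and your route has a structural flaw: it hinges on applying Theorem \ref{HS Intro} to the sequentially Cohen--Macaulay module $F/U^{\langle i-1\rangle}$. Inside this paper, Theorem \ref{HS Intro} is not an available ingredient at this point: it is re-proved as Theorem \ref{HeSb} as a corollary of Theorem \ref{partiallySCM}, and the implication (1) $\Rightarrow$ (2) of Theorem \ref{partiallySCM} is proved precisely by citing Parts (3) and (4) of the present lemma. With your proof substituted in, the chain Lemma \ref{lemma SbSt}(4) $\Rightarrow$ Theorem \ref{partiallySCM} $\Rightarrow$ Theorem \ref{HeSb} $\Rightarrow$ Lemma \ref{lemma SbSt}(4) becomes circular; your argument is valid only if Theorem \ref{HS Intro} is imported as a black box with its original proof from \cite{HS01}, which defeats the purpose of the section, namely a self-contained proof of the generalization from which the Herzog--Sbarra theorem falls out. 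Put differently, your reductions (comparing $F/U$ with $F/U^{\langle i-1\rangle}$, and $F/\Gin(U)$ with $F/\Gin(U^{\langle i-1\rangle})$, via finite-dimensional kernels) correctly reduce the $i$-sCM case to the sequentially Cohen--Macaulay case, but the sequentially Cohen--Macaulay case is exactly the content you are citing, so the core of the statement is never proved. The paper's proof fills precisely this core: it shows the stronger fact $\Gin(U^{\langle j\rangle})=\Gin(U)^{\langle j\rangle}$ for all $j\geq i$, by first deducing $\depth(F/U^{\langle j\rangle})\geq j+1$ from the exact sequences $0 \to U^{\langle j\rangle}/U^{\langle j-1\rangle} \to F/U^{\langle j-1\rangle} \to F/U^{\langle j\rangle} \to 0$, then combining the two standard facts $\depth(F/\Gin(V))=\depth(F/V)$ and the sequential Cohen--Macaulayness of $F/\Gin(V)$ (the latter you also use) with Proposition \ref{sullecoom}, Theorem \ref{thm sCM and CMf}, and Lemma \ref{Afshin}(4) to get $\Gin(U^{\langle j\rangle})=\Gin(U^{\langle j\rangle})^{\langle j\rangle}=\Gin(U)^{\langle j\rangle}$. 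If you replace your appeal to Theorem \ref{HS Intro} by this depth argument, your remaining steps are correct and yield a complete, non-circular proof.
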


  \begin{proof}
    Since $H^0_\mathfrak{m}(R)=\delta_0$, the dimension filtration of $M/H^0_\mathfrak{m}(R)$ is $\{\delta_j/\delta_0\}_j$, which shows the first part.    The long exact sequence in cohomology induced by $0 \to \delta_{j-1} \to \delta_j \to \delta_j/\delta_{j-1}\to 0$ and the Cohen-Macaulayness of $\delta_j/\delta_{j-1}$ for all $j\geq i$ easily imply (2).

    \smallskip
    \noindent
Let us fix $j\geq i$ and prove (3). If $j=0$ the assertion is clear; thus we may assume $j>0$  and by way of Part (2) that $\delta_j/\delta_{j-1}$ is $j$-dimensional Cohen-Macaulay. Let $x$ be a $(\delta_j/\delta_{j-1})$-regular element of degree one; then, the short exact sequence given by multiplication by $x$ induces a short exact sequence in cohomology  
$$0 \lra H_{\m}^{j-1}\left(\frac{\delta_{j}/\delta_{j-1}}{x(\delta_{j}/\delta_{j-1})}\right) \lra H^j_{\m}(\delta_{j}/\delta_{j-1})(-1) \lra H^j_{\m}(\delta_{j}/\delta_{j-1}) \lra 0$$ 
together with  (2) imply that $h^{j-1}((\delta_{j}/\delta_{j-1})/(x(\delta_{j}/\delta_{j-1}))=(z-1) \, h^j(\delta_{j}/\delta_{j-1})=(z-1) \, h^j(M)$.

Thus, one can easily prove that $h^{0}((\delta_{j}/\delta_{j-1})/({\bf x}(\delta_{j}/\delta_{j-1})))=(z-1)^j \, h^j(M)$, where  ${\bf x}$ is a $(\delta_j/\delta_{j-1})$-maximal regular sequence. Since $\dim((\delta_{j}/\delta_{j-1})/({\bf x}\delta_{j}/\delta_{j-1}))=0$, we also have $h^{0}((\delta_{j}/\delta_{j-1})/({\bf x}\delta_{j}/\delta_{j-1}))=\Hilb((\delta_{j}/\delta_{j-1})/({\bf x}\delta_{j}/\delta_{j-1})) = (1-z)^j\Hilb(\delta_{j}/\delta_{j-1})$, and the proof of (3) is complete.

\smallskip
\noindent
Finally, to prove (4) we  show that $\Hilb(U^{\langle j \rangle}/U^{\langle j-1 \rangle})=\Hilb(\Gin(U)^{\langle j \rangle}/(\Gin(U)^{\langle j-1 \rangle}))$ holds for all $j \geq i$. Actually, we prove more, i.e. that $\Hilb(U^{\langle j \rangle})=\Hilb(\Gin(U)^{\langle j \rangle})$ for all $j \geq i$; since $\Gin(U^{\langle j \rangle}) \subseteq \Gin(U)^{\langle j \rangle}$ by Lemma \ref{Afshin} (1) for all $j$,  the last equality is equivalent to proving that $\Gin(U^{\langle j \rangle}) = \Gin(U)^{\langle j \rangle}$ for all $j \geq i$, and this is what we do.

Consider now, for all $j$, the short exact sequences $0  \to U^{\langle j \rangle}/U^{\langle j-1 \rangle} \to F/U^{\langle j-1 \rangle} \to F/U^{\langle j \rangle} \to 0;$
we see inductively that $\depth(F/U^{\langle j \rangle}) \geq j+1$ for all $j \geq i$. For $j=d$ and if $U^{\langle j\rangle}/U^{\langle j-1\rangle}=0$ this is obvious; otherwise, since $M$ is $i$-sCM,  $U^{\langle j\rangle}/U^{\langle j-1\rangle}$ is $j$-dimensional Cohen-Macaulay for all $j\geq i$ and  by \cite[Proposition 1.2.9]{BH93} we get that
$\depth F/U^{\langle j-1\rangle}\geq \min\{ j, j+1\}=j$.

\noindent For all graded submodules $V\subseteq F$ it is well-known that $\depth(F/\Gin(V))=\depth(F/V)$ and that $F/\Gin(V)$ is sequentially Cohen-Macaulay. Thus, $j+1$ $\leq$ $\depth(F/\Gin(U^{\langle j \rangle}))$ by what we proved above, and  Proposition \ref{sullecoom} together with Theorem \ref{thm sCM and CMf} imply that the latter is also  equal to the smallest integer $t$ such that $\Gin(U^{\langle j \rangle}) \subsetneq \Gin(U^{\langle j \rangle})^{\langle t \rangle}$.
Therefore, we have shown  that $\Gin(U^{\langle j \rangle})=\Gin(U^{\langle j \rangle})^{\langle j \rangle}$ for all $j \geq i$. Now the conclusion follows from Lemma \ref{Afshin} (4).
\end{proof}

\begin{definition}
  Let $M$ be a finitely generated graded $R$-module. We let $\delta(M)=0$ if $M=0$, and we let $\delta(M)$  be the largest graded $R$-submodule of $M$ of dimension at most $\dim(M)-1$ otherwise.   Given $j \geq 0$, we also define the module $\delta^j(M)$ inductively by letting \[\delta^0(M)=M,\,\,\, \delta^1(M) = \delta(M),\,\,\,\text{and}\,\,\, \delta^j(M) = \delta(\delta^{j-1}(M)).\] 
\end{definition}
\noindent Since  $\delta_i(M)$ is the largest submodule of $M$ of dimension at most $i$, it is easy to see that for all $i\in\{0,\ldots,d\}$  there exists $j=j(i) \geq 0$ such that $\delta_i(M) = \delta^j(M)$. In particular $\delta_{d-1}(M)=\delta(M)=\delta^1(M)$.

In the following remark we collect two known facts which are useful in the following.

\begin{remark} \label{remark jsCM}  Recall that in our setting $R\cong \omega_R(n)$. It is a well-known fact that for a $d$-dimensional graded $R$-module $M$ it holds that   $\dim \Ext^{n-i}_R(M,\omega_R)\leq i$ for all $i$.
\begin{enumerate}[(1)]
\item Given a graded submodule  $N\subseteq M$  such that $\dim(N) < \dim(M)=d$, we have that $N=\delta(M)$ if and only if $M/N$ is unmixed of dimension $d$, i.e., $\dim(R/\pp) = \dim(M/N) = d$ for all $\pp \in \Ass(M/N)$. This is a straightforward application of  Proposition \ref{sche02}.
\item A $d$-dimensional finitely generated graded $R$-module $M$ is unmixed if and only if \linebreak $\dim(\Ext^{n-j}_R(M,R)) < j$ for all $j \in \{0,\ldots,d-1\}$. In fact,  if $\pp \in \Ass(M)$ were a prime of height $n-j$ for some $j<d$, we would have that $\Ext^{n-j}_R(M,R)_\pp \cong \Ext^{n-j}_{R_\pp}(M_\pp,R_\pp) \ne 0$, since the latter is, up to shift, the Matlis dual of $H^0_{\pp R_\pp}(M_\pp)$ which is not zero because $\depth(M_\pp)=0$ by \cite[Proposition 1.2.13]{BH93}. It follows that $\pp \in \Supp(\Ext^{n-j}_R(M,R))$ and, thus, $\dim(\Ext^{n-j}_R(M,R))$ $\geq j$, contradiction. The converse is analogous, observing that if $\Ext^{n-j}_R(M,R)$ has dimension at least $j$ and, hence, necessarily equal to $j$, then it must have a prime of  height $n-j$ in its support. 
\end{enumerate}
\end{remark}

The following lemma can be regarded as an enhanced graded version of \cite[Proposition 4.16]{CP17}.

\begin{lemma} \label{lemma finite length} Let $M$ be a finitely generated graded $R$-module. For a sufficiently general homogeneous element $x \in \m$ and for all $j \geq 0$  there is a short exact sequence $0 \to \delta^j(\delta(M)/x\delta(M)) \to \delta^{j+1}(M/xM) \to L_j \to 0$, where $L_j$ is a module of finite length. 
    
\end{lemma}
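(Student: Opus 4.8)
The plan is to reduce the statement to the case $j=0$ by a purely formal propagation argument, and to settle that base case by comparing the dimension filtration of $M/xM$ with the general hyperplane section of the unmixed module $N:=M/\delta(M)$. Since $k$ is infinite, by Remark \ref{remark SF -> F} I may take $x$ simultaneously strictly filter regular for $M$, for $\delta(M)$ and for $N$; thus $x$ is a parameter on each of them and, for every $i$, the torsion $(0:_{\Ext^i_R(N,R)}x)$ has finite length, while passing to $\Ext^i_R(N,R)/x\Ext^i_R(N,R)$ lowers dimension by one whenever $\Ext^i_R(N,R)$ has positive dimension. By Remark \ref{remark jsCM}(1), $N$ is unmixed of dimension $d$, so in fact $x$ is a nonzerodivisor on $N$.

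\emph{The case $j=0$.} Applying the snake lemma to multiplication by $x$ on $0\to\delta(M)\to M\to N\to 0$, and using that $x$ is a nonzerodivisor on $N$, produces the short exact sequence
\[
0\longrightarrow \delta(M)/x\delta(M)\longrightarrow M/xM\longrightarrow N/xN\longrightarrow 0 .
\]
A dimension count (with $x$ a parameter on $\delta(M)$) gives $\dim\big(\delta(M)/x\delta(M)\big)\le d-2=\dim(M/xM)-1$, so $\delta(M)/x\delta(M)\subseteq\delta(M/xM)$, and the cokernel $L_0$ is precisely the image of $\delta(M/xM)$ in $N/xN$, a submodule of dimension $\le d-2$, hence contained in $\delta_{d-2}(N/xN)$. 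Everything therefore reduces to showing that $\delta_{d-2}(N/xN)$ has finite length.

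\emph{The key estimate.} This is the crux, and I expect it to be the main obstacle. Applying $\Hom_R(-,R)$ to $0\to N\xrightarrow{\,x\,}N\to N/xN\to 0$ yields, for each $j$, a short exact sequence
\[
0\longrightarrow \Ext^{n-j-1}_R(N,R)/x\Ext^{n-j-1}_R(N,R)\longrightarrow \Ext^{n-j}_R(N/xN,R)\longrightarrow \big(0:_{\Ext^{n-j}_R(N,R)}x\big)\longrightarrow 0 .
\]
Since $N$ is unmixed, Remark \ref{remark jsCM}(2) gives $\dim\Ext^{n-(j+1)}_R(N,R)\le j$ for $1\le j\le d-2$; quotienting by the general element $x$ drops this to $\le j-1$, while the torsion term has finite length by the choice of $x$. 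Hence $\dim\Ext^{n-j}_R(N/xN,R)<j$ for all $1\le j\le d-2$, and the argument of Remark \ref{remark jsCM}(2) then shows that $N/xN$ has no associated prime of dimension in $\{1,\dots,d-2\}$. As $\delta_{d-2}(N/xN)$ has dimension $\le d-2$, its only possible associated prime is $\m$, so it has finite length, completing the case $j=0$.

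\emph{Propagation to all $j$.} Finally I would record the elementary fact that for graded modules $A\subseteq B$ with $B/A$ of finite length one has $\delta(A)=A\cap\delta(B)$, whence $\delta(A)\subseteq\delta(B)$ and $\delta(B)/\delta(A)$ embeds into $B/A$; iterating gives $\delta^j(A)\subseteq\delta^j(B)$ with finite-length cokernel for every $j\ge 0$. Applying this with $A=\delta(M)/x\delta(M)\subseteq B=\delta(M/xM)$ coming from the base case, and using the identity $\delta^j(B)=\delta^j(\delta(M/xM))=\delta^{j+1}(M/xM)$, produces the asserted short exact sequence with $L_j$ of finite length for all $j$. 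The propagation step is purely formal; the real work is the key estimate, where the genericity of $x$ must be combined with the $\Ext$-dimension characterization of unmixedness to control the low-dimensional part of the hyperplane section $N/xN$.
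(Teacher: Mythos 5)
Your proof is correct where the lemma itself is non-degenerate, namely for $d=\dim(M)\geq 2$, and your base case $j=0$ is essentially the paper's own argument: the same exact sequence $0 \to \delta(M)/x\delta(M) \to M/xM \to N/xN \to 0$ (obtained in the paper from $\delta(M)\cap xM=x\delta(M)$ rather than the snake lemma), the same short exact sequences of $\Ext$ modules induced by multiplication by the strictly filter regular element $x$, and the same use of Remark \ref{remark jsCM} (2) to exclude associated primes of $N/xN$ of dimension $1,\ldots,d-2$, forcing the cokernel, which sits inside $\delta_{d-2}(N/xN)$, to have finite length. Where you genuinely depart from the paper is the propagation to $j>0$. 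The paper runs an induction on $j$ whose step is an annihilator argument: choosing $p\gg 0$ with $\m^p L_{j-1}=0$, it gets $\m^p\,\delta^{j+1}(M/xM)\subseteq\delta^{j-1}(\delta(M)/x\delta(M))$, and a dimension comparison pushes this inclusion into $\delta^{j}(\delta(M)/x\delta(M))$, giving $\m^p L_j=0$. You instead isolate the formal fact that for graded modules $A\subseteq B$ with $B/A$ of finite length one has $\delta(A)=A\cap\delta(B)$ (both inclusions follow from maximality of $\delta$, since $\dim(A)=\dim(B)$ when $\dim(B)>0$, while both sides vanish otherwise), so that $\delta(B)/\delta(A)\cong(A+\delta(B))/A$ embeds in $B/A$, and then you iterate. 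This is cleaner in three respects: it handles all $j$ at once from the base case; the containments $\delta^j(\delta(M)/x\delta(M))\subseteq\delta^{j+1}(M/xM)$, which the paper merely asserts as ``clear from the definition of $\delta$'', are actually proved by your identity; and you obtain the finer conclusion that $L_j$ embeds in $L_{j-1}$, hence in $L_0$, for every $j$. What the paper's more ad hoc bookkeeping buys in exchange is essentially nothing beyond self-containment, so your route is a genuine simplification of the inductive step.

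One caveat you share with the paper: the inequality $\dim(\delta(M)/x\delta(M))\leq d-2$ presupposes $d\geq 2$, so $d\leq 1$ needs a separate word. The paper dismisses this case as trivial, but for $d=1$ with $H^0_\m(M)\neq 0$ the module $\delta(M)/x\delta(M)$ is non-zero by Nakayama while $\delta(M/xM)=0$, so no such short exact sequence can exist; both proofs (and the statement) should be read either for $d\neq 1$ or under the hypothesis $\depth(M)>0$, which is the only situation in which the lemma is invoked in the proof of Proposition \ref{Ale}.
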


\begin{proof}
    Let $d=\dim(M)$. The case $d \leq 1$ is trivial, therefore we will assume that $d\geq 2$ and proceed by induction on $j \geq 0$.

    \smallskip
    \noindent
    First assume that $j=0$; let $\ov{M} = M/\delta(M)$, and observe that $\ov{M}$ is unmixed of positive depth and dimension $d$. In particular, $\Ext^n_R(\ov{M},R)=0$ and, by Remark \ref{remark jsCM}, we have $\dim(\Ext^{n-\ell}_R(\ov{M},R)) < \ell$ for all $0 < \ell <d $.

    For $x$ sufficiently general, we have that $x$ is $\ov{M}$-regular and, thus, $\delta(M) \cap xM = x(\delta(M):_M x) = x \delta(M)$; moreover,  either $\dim(\delta(M)/x\delta(M)) \leq 0$ or $\dim(\delta(M)/x \delta(M)) = \dim(\delta(M))-1<d-1 = \dim(\ov{M}/x\ov{M})=\dim(M/xM)$. If we let $T=\delta(M)+xM$, then $\delta(M)/x\delta(M) \cong T/xM  \subseteq \delta(M/xM)$, and therefore we have an exact sequence $0 \to \delta(M)/x\delta(M) \stackrel{\varphi}{\to} \delta(M/xM)$. Moreover, we have that 
${\rm coker}(\varphi)\cong \delta((M/xM)/(T/xM)) \cong \delta(M/T)$. Since $M/T\cong \ov{M}/x\ov{M}$, we then have a short exact sequence $$0 \lra \delta(M)/x\delta(M) \stackrel{\varphi}{\lra} \delta(M/xM) \lra \delta(\ov{M}/x\ov{M}) \lra 0.$$
Since $x$ is sufficiently general, by Remark \ref{remark SF -> F} we may assume that $x$ is also strictly filter regular for $\ov{M}$. We then have that either $\dim(\Ext^{n-\ell}_R(\ov{M},R) \otimes_R R/(x)) = \dim(\Ext^{n-\ell}_R(\ov M,R)) \leq 0$ or $\dim(\Ext^{n-\ell}_R(\ov M,R) \otimes_R R/(x)) = \dim(\Ext^{n-\ell}_R(\ov M,R))-1<\ell-1$ for all $0<\ell<d$. Since $x$ is strictly filter regular for $\ov{M}$, from the short exact sequences 
\[
0 \lra \Ext^{n-\ell}_R(\ov M,R) \otimes_R R/(x) \lra \Ext^{n-(\ell-1)}_R(\ov M/x\ov M,R) \lra 0:_{\Ext^{n-(\ell-1)}_R(\ov M,R)} x \lra 0
\]
it also follows that $\dim(\Ext^{n-(\ell-1)}_R(\ov M/x\ov M,R)) = \dim( \Ext^{n-\ell}_R(\ov M,R) \otimes_R R/(x))<\ell-1$ for all $0\leq \ell-1<d-1$. Thus, by Remark \ref{remark jsCM} it follows that $\ov{M}/x\ov{M}$ is unmixed of dimension $d-1$ and  that $L_0=\delta(\ov M/x\ov M)$ is necessarily $\delta_0(\ov M/x\ov M)=H^0_\m(\ov M/x\ov M)$, which  has finite length. 

\medskip
Now suppose that the statement of the lemma is proved for $j-1$, so that we have a short exact sequence $0 \to \delta^{j-1}(\delta(M)/x\delta(M)) \to \delta^j(M/xM) \to L_{j-1} \to 0$,  with $L_{j-1}$ of finite length. 

It is clear from the definition of $\delta$ that there is an exact sequence $0 \lra \delta^j(\delta(M)/x\delta(M)) \stackrel{\varphi_j}{\lra} U \lra L_j \lra 0$, where we let  $U=\delta^{j+1}(M/xM)$ and $L_j={\rm coker}(\varphi_j)$. If $U$ has finite length  we are done, so let us assume that $\dim(U)>0$. In this case we necessarily have that $h=\dim(\delta^j(M/xM))>0$, and since $L_{j-1}$ has finite length we conclude that $\dim(\delta(M)/x\delta(M))= h$. Again because $L_{j-1}$ has finite length, we can find $p \gg 0$ such that $\m^p U \subseteq \delta^{j-1}(\delta(M)/x\delta(M))$. Moreover, $\dim(\m^p U) \leq \dim(U)<h$, and therefore $\m^p U$ is contained in $\delta^{j}(\delta(M)/x\delta(M))$. This shows that $\m^p L_j=0$, and thus $L_j$ has finite length. 
\end{proof}

In \cite{SS17} it is claimed that, if $x \in R$ is $M$-regular, it is possible to prove that $M$ is $i$-sCM if and only if $M/xM$ is $(i-1)$-sCM following the same lines of the  proof \cite[Theorem 4.7]{Sch99}, which is not utterly correct, as we already pointed out in Example \ref{regulare}.  The claim  is indeed false: if $x$ is $M$-regular and $M/xM$ is $(i-1)$-sCM, then $M$ is not necessarily $i$-sCM, as the following example shows.

\begin{example}
Let $R$ be a $2$-dimensional domain which is not Cohen-Macaulay, cf. Example \ref{regulare}. For all $0 \ne x \in R$  we have that $R/(x)$ is $0$-sCM, but $R$ is not even $2$-sCM.
\end{example}
\noindent
In the following proposition we show that the claimed result holds true under some additional assumption.

\begin{proposition} \label{Ale}
 Let $i$ be a positive integer, and $M$ be a finitely generated graded $R$-module with $\depth(M)> 0$. Also assume that $\depth(\Ext^{n-\ell}_R(M,R))>0$ for all $\ell \geq i-1$. For a sufficiently general $x\in R$, if $M/xM$ is $(i-1)$-sCM, then $M$ is $i$-sCM.
\end{proposition}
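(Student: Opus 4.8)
The plan is to reduce the statement to an ordinary sequential Cohen--Macaulayness property of a single quotient and then to apply the converse half of Proposition~\ref{hscor1.9}. By Remark~\ref{remark jsCM-0}, $M$ is $i$-sCM if and only if $N:=M/\delta_{i-1}(M)$ is sequentially Cohen--Macaulay, so it suffices to prove that $N$ is sCM. I would fix once and for all a sufficiently general homogeneous $x$, chosen so that simultaneously: $x$ is $M$- and $N$-regular (both have positive depth, since $H^0_\mm(M)=0$ forces $\delta_0(N)=0$), $x$ is strictly filter regular for $N$ by Remark~\ref{remark SF -> F}, $x$ is regular on every nonzero $\Ext^{n-\ell}_R(M,\omega_R)$ with $\ell\ge i-1$ (these have positive depth by hypothesis, so $\mm\notin\Ass$), and the conclusion of Lemma~\ref{lemma finite length} holds. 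The goal is then to verify the two hypotheses of Proposition~\ref{hscor1.9}(2) for $N$: that $N/xN$ is sequentially Cohen--Macaulay, and that $x$ is regular on all nonzero $\Ext^{n-\ell}_R(N,\omega_R)$ with $\ell>0$.

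For the $\Ext$ modules of $N$ the easy range is $\ell\ge i$: applying $\Hom_R(-,\omega_R)$ to $0\to\delta_{i-1}(M)\to M\to N\to 0$ and using that $\dim\delta_{i-1}(M)\le i-1$, so that $\Ext^{n-\ell}_R(\delta_{i-1}(M),\omega_R)=0$ for $\ell\ge i$, yields $\Ext^{n-\ell}_R(N,\omega_R)\cong\Ext^{n-\ell}_R(M,\omega_R)$ for all $\ell\ge i$; by hypothesis these have positive depth, so the general $x$ is regular on them. The remaining range $0<\ell<i$ I would dispose of by proving that $N$ has depth at least $i$, so that $\Ext^{n-\ell}_R(N,\omega_R)$, the Matlis dual of $H^\ell_\mm(N)$, vanishes there. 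Since $x$ is $N$-regular, $\depth N=\depth(N/xN)+1$, so it is enough to establish $\depth(N/xN)\ge i-1$.

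This is where Lemma~\ref{lemma finite length} enters. Passing to the quotient by $x$ lowers dimensions by one, so $\delta_{i-1}(M)$ is carried, up to a module of finite length, onto $\delta_{i-2}(M/xM)$; iterating the short exact sequences of Lemma~\ref{lemma finite length} identifies $N/xN$ with $(M/xM)/\delta_{i-2}(M/xM)$ up to finite length. Because $M/xM$ is $(i-1)$-sCM, Remark~\ref{remark jsCM-0} gives $H^j_\mm\big((M/xM)/\delta_{i-2}(M/xM)\big)=0$ for all $j\le i-2$, whence $\depth(N/xN)\ge i-1$ once one checks that the finite-length discrepancies create no new low local cohomology. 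The same comparison, combined with Corollary~\ref{coroll H0} (which lets one ignore the finite-length part when testing sequential Cohen--Macaulayness), shows that $N/xN$ is itself sequentially Cohen--Macaulay. With both hypotheses of Proposition~\ref{hscor1.9}(2) verified, $N$ is sequentially Cohen--Macaulay, and therefore $M$ is $i$-sCM.

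The main obstacle is precisely the bookkeeping in the low range $0\le\ell<i$: controlling the finite-length error terms $L_j$ of Lemma~\ref{lemma finite length} sharply enough to obtain both $\depth(N/xN)\ge i-1$ and the sequential Cohen--Macaulayness of $N/xN$, since a spurious finite-length contribution to $H^1_\mm$ would already break the depth estimate. It is exactly here that the hypothesis is needed in full force, namely that $\depth\Ext^{n-\ell}_R(M,R)>0$ for \emph{all} $\ell\ge i-1$ and not merely for $\ell\ge i$: the boundary index $\ell=i-1$ is what forces the correct depth behaviour after cutting by $x$, and its omission is what allows the counterexample recorded just before the statement, a non Cohen--Macaulay module whose quotient by a regular element is partially sCM.
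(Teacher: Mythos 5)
Your reduction to $N=M/\delta_{i-1}(M)$, the treatment of the range $\ell\geq i$, and the use of Lemma~\ref{lemma finite length} to compare $N/xN$ with $Q:=(M/xM)/\delta_{i-2}(M/xM)$ all match the paper's proof: one gets an exact sequence $0\to L\to N/xN\to Q\to 0$ with $L$ of finite length. The gap is exactly the step you flag and then wave away, namely ``$\depth(N/xN)\geq i-1$ once one checks that the finite-length discrepancies create no new low local cohomology.'' They do create it, unavoidably: $L$ is a \emph{submodule} of $N/xN$, and since $Q$ has positive depth one has $H^0_\m(N/xN)=L$, so whenever $L\neq 0$ (which cannot be ruled out in advance; only a posteriori does one learn $L=0$) we get $\depth(N/xN)=0$, and the formula $\depth(N)=\depth(N/xN)+1$ yields only $\depth(N)\geq 1$. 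Note the internal inconsistency: to conclude that $N/xN$ is sequentially Cohen--Macaulay you correctly use Corollary~\ref{coroll H0} to discard $H^0_\m(N/xN)=L$, but your depth claim requires that very module to vanish.

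The problem is not just the route but the target: $\depth(N)\geq i$ cannot be established before one already knows that $N$ is sequentially Cohen--Macaulay. In the local cohomology sequence of $0\to N(-1)\stackrel{\cdot x}{\to} N\to N/xN\to 0$, the connecting map carries $H^0_\m(N/xN)=L$ onto $0:_{H^1_\m(N)}x$ (here $H^0_\m(N)=0$), so this sequence can never exclude $H^1_\m(N)\neq 0$; it can only force $H^j_\m(N)=0$ for $2\leq j\leq i-1$. This is precisely why the paper never aims at a depth bound on $N$ below index $i$. Instead it dualizes and works with the finitely generated modules $\Ext^{n-\ell}_R(N,R)$: since $L$ has finite length, $\Ext^{n-m}_R(N/xN,R)\cong\Ext^{n-m}_R(Q,R)=0$ for $1\leq m\leq i-2$, so multiplication by $x$ is surjective on $\Ext^{n-\ell}_R(N,R)$ for $2\leq\ell\leq i-1$ and graded Nakayama gives vanishing there; for $\ell=1$ one only gets \emph{injectivity} of $\cdot x$, whence $\Ext^{n-1}_R(N,R)$ is zero or Cohen--Macaulay of dimension $1$ --- possibly nonzero at this stage of the argument. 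Peskine's criterion (3) of Theorem~\ref{peskine} (equivalently, the hypotheses of Proposition~\ref{hscor1.9}(2), since regularity of $x$ on these $\Ext$ modules is exactly what has been shown) tolerates a nonzero Cohen--Macaulay $\Ext^{n-1}$, whereas your plan needs $\Ext^{n-1}_R(N,R)=0$, which is simply not provable at that point. So the skeleton of your argument is sound, but the low range $0<\ell<i$ must be handled on the $\Ext$ side by Nakayama and injectivity, not by a depth estimate on $N/xN$.
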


\begin{proof}
Since a module is $0$-sCM if and only if it is $1$-sCM, cf. Definition \ref{Def i-sCM} and Corollary \ref{coroll H0},  when  $i=1,2$ the statement follows immediately from Proposition \ref{hscor1.9}; therefore we may let $3 \leq i \leq d=\dim(M)$. 
Let $j$ be the smallest integer such that $\delta_{i-1}(M) = \delta^j(M)$, and let $N = M/\delta^j(M)$. Observe that $\depth (N)>0$ by Proposition \ref{assass}. By Remark \ref{remark jsCM-0} it is enough to prove that $N$ is sequentially Cohen-Macaulay, and this is what we do.

  Observe that, since $\dim(\delta^j(M)) \leq i-1$, we have that $\Ext^{n-\ell}_R(M,R) \cong \Ext^{n-\ell}_R(N,R)$ for all $\ell \geq i$ and there is an injection $0 \to \Ext^{n-(i-1)}_R(N,R) \to  \Ext^{n-(i-1)}_R(M,R)$. In particular, from our assumptions we get that
  $$\depth(\Ext^{n-\ell}_R(N,R))>0 \text{\,\, for all \,\,} \ell \geq i-1.$$

  \noindent By a repeated application of Lemma \ref{lemma finite length} there exists a short exact sequence
  $$0 \lra \delta^j(M)/x\delta^j(M)\stackrel{\varphi}{\lra}\delta^j(M/xM) \lra L \lra 0,$$
  where $L$ is a module of finite length.  Now, either  $\dim(\delta^j(M/xM))= \dim(\delta^j(M)/x\delta^j(M)) = \dim(\delta^j(M))-1$, or $\dim(\delta^j(M/xM))\leq 0$, and in both cases we have that $\dim(\delta^j(M/xM))$ $\leq $ $i-2$. By minimality of $j$, we have that $\dim(\delta^{j-1}(M)) > i-1$ and applying iteratively Lemma \ref{lemma finite length} as we did above, we also obtain  $\dim(\delta^{j-1}(M/xM)) = \dim(\delta^{j-1}(M))-1 > i-2$; thus, we may conclude that $$\delta^j(M/xM) = \delta_{i-2}(M/xM).$$
 Since $L={\rm coker}(\varphi)$ and  $x$ is regular for  $N= M/\delta^j(M)$, we have $(M/xM)/(\delta^j(M)/x\delta^j(M))$ $\cong $ $M/(\delta^j(M)+xM) \cong N/xN$, and the above yields a short exact sequence $0 \to L \to N/xN \to (M/xM)/\delta_{i-2}(M/xM) \to 0.$

Since $L$ has finite length, the associated long exact sequence of $\Ext$-modules yields that $$\Ext^{n-\ell}_R(N/xN,R) \cong \Ext^{n-\ell}_R((M/xM)/\delta_{i-2}(M/xM),R) \text{\,\,for all\,\,}  \ell \ne 0.$$ In particular, being $M/xM$ a $(i-1)$-sCM module by assumption, Remark \ref{remark jsCM-0} and Peskine Theorem \ref{peskine} imply that $\Ext^{n-\ell}_R(N/xN,R)$ is either zero or Cohen-Macaulay of dimension $\ell$ for all $\ell\neq 0$. Remark \ref{remark jsCM-0} together with Local Duality also imply that  
\[
\Ext^{n-(\ell-1)}_R(N/xN,R)=0 \text{ for all } 1 \leq \ell-1 \leq i-2.
\]

We may assume that $x$, which  is $N$-regular, is also    $\Ext^{n-\ell}_R(N,R)$-regular for all $\ell \geq i-1$, for we proved above that all these modules  have positive depth.  For $\ell \geq i$ we thus  have short exact sequences $0 \to \Ext^{n-\ell}_R(N,R) \stackrel{\cdot x}{\lra} \Ext^{n-\ell}_R(N,R) \to \Ext^{n-(\ell-1)}_R(N/xN,R) \to 0$, from which  it follows that $\Ext^{n-\ell}_R(N,R)$ is either zero or Cohen-Macaulay of dimension $\ell$ for all $\ell \geq i$.

\noindent
From the above, we also have that for all $2 \leq \ell \leq i-1$ 
the maps $\Ext^{n-\ell}_R(N,R) \stackrel{\cdot x}{\longrightarrow} \Ext^{n-\ell}_R(N,R)$ are isomorphisms  and,  by graded Nakayama's Lemma, that $\Ext^{n-\ell}_R(N,R)=0$ for all $2 \leq \ell \leq i-1$. Finally, we also have an injection $0 \to \Ext^{n-1}_R(N,R) \stackrel{\cdot x}{\longrightarrow} \Ext^{n-1}_R(N,R)$, which implies that $\Ext^{n-1}_R(N,R)$ is Cohen-Macaulay of dimension one by Remark \ref{remark jsCM}.

Applying Peskine's Theorem 2.9, we have thus showed that $N$ is sequentially Cohen-Macaulay, that is, $M$ is $i$-sCM. 

\end{proof}

 The next theorem provides a characterization of partially sequentially Cohen-Macaulay modules. It was proved for the first time in \cite[Theorem 3.5]{SS17} in the ideal case. Here, we generalize the result to finitely generated modules and fix the gap in the original proof thanks to Proposition \ref{Ale}. We let $R_{[n-1]}=k[x_1, \dots, x_{n-1}] \cong R/x_nR$ and denote by $N_{[n-1]}$ the $R_{[n-1]}$-module $N/x_n N \simeq N  \otimes_R R/x_nR$ by restriction of scalars. We let $x\in R$ be a general linear form which, without loss of generality, we may write as $l=a_1x_1 + \dots + a_{n-1}x_{n-1}-x_n$ and consider the map $g_n\: R \to R_{[n-1]}$, defined by $x_i\mapsto x_i$ for $i=1,\ldots,n-1$ and $x_n\mapsto a_1x_1 + \dots + a_{n-1}x_{n-1}$. Then, the surjective homomorphism $F/U \rightarrow F_{[n-1]}/g_n(U)$
has kernel $(U+xF)/U$ and induces the isomorphism  
\begin{equation}\label{generic}
\frac{F}{U+xF} \cong \frac{F_{[n-1]}}{g_n(U)}.
\end{equation}
Moreover, the image of $\Gin(U)$ in $F_{[n-1]}$ via the mapping $x_n \mapsto 0$ is $\Gin(U)_{[n-1]}$. With this notation, the module version of \cite[Corollary 2.15]{Gre98} states that 
\begin{equation}\label{215}
\Gin(g_n(U))=\Gin(U)_{[n-1]}.
\end{equation}

\begin{theorem}\label{partiallySCM}
Let $M$ be a finitely generated graded $R$-module of dimension $d$, and let $M \cong F/U$ be a free graded presentation of $M$. The following conditions are equivalent:
\begin{enumerate}[(1)]
\item $F/U$ is $i$-sCM;
\item $h^j(F/U)=h^j(F/\Gin(U))$ for all $i\leq j\leq d$.
\end{enumerate}
\end{theorem}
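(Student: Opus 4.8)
\emph{Forward direction.} The plan is to read this off from the Hilbert-series dictionary already in place. Recall that $F/\Gin(U)$ is sequentially Cohen--Macaulay, hence $i$-sCM, and write $\{\gamma_j\}_j$ for its dimension filtration. If $F/U$ is $i$-sCM, then Lemma~\ref{lemma SbSt}(4) gives $\Hilb(\delta_j/\delta_{j-1})=\Hilb(\gamma_j/\gamma_{j-1})$ for all $j\geq i$, while Lemma~\ref{lemma SbSt}(3), applied to $F/U$ and to $F/\Gin(U)$ separately, gives $(z-1)^j h^j(F/U)=(1-z)^j\Hilb(\delta_j/\delta_{j-1})$ and $(z-1)^j h^j(F/\Gin(U))=(1-z)^j\Hilb(\gamma_j/\gamma_{j-1})$ for $j\geq i$. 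Cancelling the nonzero factor $(z-1)^j$ yields $h^j(F/U)=h^j(F/\Gin(U))$ for $i\leq j\leq d$, which is $(2)$.

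\emph{Reverse direction: strategy.} For the converse I would argue by induction on $n$; the cases $i\leq 1$ are exactly Theorem~\ref{HS Intro} (since $0$-sCM, $1$-sCM and sequentially Cohen--Macaulay coincide), so I may assume $i\geq 2$ and that the statement holds over $k[x_1,\dots,x_{n-1}]$. After replacing $M$ by $M/H^0_\m(M)$ — which changes neither side of $(2)$ in degrees $j\geq 1$ and preserves $i$-sCM-ness by Lemma~\ref{lemma SbSt}(1) — I may assume $\depth(M)>0$. Choosing a general linear form $x$ and using \eqref{generic} and \eqref{215}, the quotient $M/xM$ is presented as $F_{[n-1]}/g_n(U)$ over $R_{[n-1]}$, and $\Gin(g_n(U))=\Gin(U)_{[n-1]}$ is the image of $\Gin(U)$ under $x_n\mapsto 0$; equivalently $F_{[n-1]}/\Gin(g_n(U))\cong (F/\Gin(U))/x_n(F/\Gin(U))$. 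The plan is then to verify that $M/xM$ satisfies $(2)$ over $R_{[n-1]}$ with $i-1$ in place of $i$, so that by induction $M/xM$ is $(i-1)$-sCM, and to promote this to $M$ being $i$-sCM by Proposition~\ref{Ale}.

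\emph{Reverse direction: the comparison to check.} On the $\Gin$ side everything is explicit: since $F/\Gin(U)$ is sequentially Cohen--Macaulay, $H^j_\m(F/\Gin(U))\cong H^j_\m(\gamma_j/\gamma_{j-1})$ with $\gamma_j/\gamma_{j-1}$ Cohen--Macaulay of dimension $j$ (Proposition~\ref{sullecoom}), so $x_n$ is surjective on each $H^j_\m(F/\Gin(U))$ and the long exact local-cohomology sequence collapses to $h^j\big((F/\Gin(U))/x_n(F/\Gin(U))\big)=(z-1)\,h^{j+1}(F/\Gin(U))$ for all $j$. For $M$ and the general form $x$ one has the analogous long exact sequence relating $h^j(M/xM)$ to $h^j(M)$, $h^{j+1}(M)$ and the kernels and cokernels of $\cdot x$ on $H^j_\m(M)$; I would show that for $j\geq i-1$ this sequence collapses in the same way, giving $h^j(M/xM)=(z-1)h^{j+1}(M)=(z-1)h^{j+1}(F/\Gin(U))=h^j(F_{[n-1]}/\Gin(g_n(U)))$ for $i-1\leq j\leq d-1$, the middle equality being the hypothesis $(2)$ for $j+1\geq i$. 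Granting this, induction gives that $M/xM$ is $(i-1)$-sCM, and Proposition~\ref{Ale} then yields that $M$ is $i$-sCM, provided its running hypotheses $\depth(M)>0$ and $\depth(\Ext^{n-\ell}_R(M,R))>0$ for all $\ell\geq i-1$ have been verified.

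\emph{Main obstacle.} The hard part is exactly the collapse of the local-cohomology sequence for $M$ in the range $j\geq i-1$, since it hinges on $\cdot x$ being surjective on $H^j_\m(M)$ there; by graded Matlis duality this surjectivity for a general $x$ is equivalent to $\depth(\Ext^{n-j}_R(M,R))>0$, which is also precisely the hypothesis Proposition~\ref{Ale} demands. For $j\geq i$ I expect to extract this positivity from the equalities $h^j(M)=h^j(F/\Gin(U))$ together with the componentwise bound $h^j(M)\leq h^j(F/\Gin(U))$, which force $\Ext^{n-j}_R(M,R)$ to be Cohen--Macaulay or zero; the genuinely delicate point is the bottom index $\ell=i-1$, lying just outside the range controlled by $(2)$, and it is the treatment of this index — via the finite-length bookkeeping of Lemma~\ref{lemma finite length} feeding Proposition~\ref{Ale} — that I expect to be the crux of the whole argument.
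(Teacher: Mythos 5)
Your forward direction is exactly the paper's, and your skeleton for the converse --- reduce to positive depth, cut with a general linear form, identify $M/xM$ with $F_{[n-1]}/g_n(U)$ via \eqref{generic} and \eqref{215}, get $(i-1)$-sCM by induction, and conclude with Proposition~\ref{Ale} --- is also the paper's. But there is a genuine gap at precisely the point you label the ``main obstacle'', and neither of the two devices you propose there closes it. The paper's engine is a Hilbert-series squeeze that you never formulate: for each $j$ write the exact sequence $0 \lra B^{(j)} \lra H^{j-1}_\m(F/(U+xF)) \lra H^j_\m(F/U)(-1) \stackrel{\cdot x}{\lra} H^j_\m(F/U) \lra C^{(j)} \lra 0$, so that $h^{j-1}(F/(U+xF)) = (z-1)h^j(F/U)+\Hilb(B^{(j)})+\Hilb(C^{(j)})$, and then sandwich $(z-1)h^j(F/U) \leq h^{j-1}(F_{[n-1]}/g_n(U)) \leq h^{j-1}(F_{[n-1]}/\Gin(g_n(U))) = (z-1)h^j(F/\Gin(U))$. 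Here the middle step is Sbarra's coefficientwise inequality \cite[Theorems 2.4 and 5.4]{Sb01} applied to the \emph{hyperplane section} and its generic initial module --- you only ever invoke that bound for $M$ itself, which is not the instance needed --- and the last equality is the Gin-side collapse. Hypothesis (2) makes the two ends of the chain equal for all $j \geq i$, forcing $B^{(j)}=C^{(j)}=0$ for all $j\geq i$.

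This single computation settles both of your open sub-problems at once, and your substitutes for them do not work. First, the resulting equalities give $h^{j}(F_{[n-1]}/g_n(U))=h^{j}(F_{[n-1]}/\Gin(g_n(U)))$ for all $j \geq i-1$, which is the input the induction needs. Second --- and this is the point you miss --- $B^{(j)}$ and $C^{(j-1)}$ are the \emph{same} module, namely the cokernel of $\cdot x$ on $H^{j-1}_\m(F/U)$; hence $C^{(i-1)}=B^{(i)}=0$, so $\cdot x$ is surjective on $H^{\ell}_\m(F/U)$ for every $\ell \geq i-1$, which by duality is exactly the hypothesis $\depth(\Ext^{n-\ell}_R(M,R))>0$ for all $\ell \geq i-1$ of Proposition~\ref{Ale}, bottom index included: the ``delicate'' index $i-1$ comes for free from the vanishing at level $i$. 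By contrast, your plan for $j\geq i$ --- that the equalities $h^j(F/U)=h^j(F/\Gin(U))$ ``force $\Ext^{n-j}_R(M,R)$ to be Cohen-Macaulay or zero'' --- is circular: by Remark~\ref{remark jsCM-0} and Theorem~\ref{peskine} that assertion is essentially the implication (2) $\Rightarrow$ (1) being proved, and two modules with equal Hilbert series of local cohomology need not share Cohen-Macaulayness of their deficiency modules, so no direct argument exists. Your plan for $\ell=i-1$ misreads Lemma~\ref{lemma finite length}: it is an ingredient \emph{inside} the proof of Proposition~\ref{Ale}, and gives no access to the depth hypothesis that must be verified \emph{before} that proposition can be invoked. (A minor structural difference: your base case imports Theorem~\ref{HS Intro} from \cite{HS01}, whereas the paper inducts on $d$ with base $d=0$, so that Theorem~\ref{HeSb} comes out as a corollary rather than going in as an input.)
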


\begin{proof}
(1) $\Rightarrow$ (2) is a direct consequence of Lemma \ref{lemma SbSt} (3) and (4), since also $F/\Gin(U)$ is $i$-sCM.
    
\smallskip
\noindent
We prove the converse by induction on $d$. If $d=0$, $F/U$ is Cohen-Macaulay and  sequentially Cohen-Macaulay. Therefore, without loss of generality we may assume that $F/U$ and $F/\Gin(U)$ have positive dimension and, by Lemma 
\ref{lemma SbSt} (1), also positive depth.
Since $F/\Gin(U)$ is sequentially Cohen-Macaulay, Peskine's Theorem \ref{peskine} implies that there exists a linear form $l \in R$ which is $F/\Gin(U)$-regular and also regular for all non-zero $\Ext^{n-j}_R(F/\Gin(U),\omega_R)$ with $j>0$. 
Starting with the exact sequence $0 \rightarrow F/\Gin(U) (-1) \stackrel{\cdot l}\rightarrow F/\Gin(U) \rightarrow F/(\Gin(U)+lF) \rightarrow 0$, by the above and Local Duality we obtain the short exact sequences
$$
0 \lra H^{j-1}_{\mm}(F/(\Gin(U)+lF)) \lra H^j_{\mm}(F/\Gin(U))(-1) \stackrel{\cdot l}\lra H^j_{\mm}(F/\Gin(U)) \lra 0,
$$ 
 from which it follows that $h^{j-1}(F/(\Gin(U)+lF))=(z-1)h^j(F/\Gin(U))$ for all $j$.

Consider now a sufficiently general linear form $x \in R$. For all $j$, there are exact sequences 
$$
0 \lra B^{(j)} \lra H^{j-1}_{\mm}(F/(U+xF)) \lra H^j_{\mm}(F/U)(-1) \lra H^j_{\mm}(F/U) \lra C^{(j)} \lra 0
$$
for some $R$-modules $B^{(j)}$ and $C^{(j)}$, and these imply that $h^{j-1}(F/(U+xF)) = (z-1)h^j(F/U)+\Hilb(B^{(j)})+\Hilb(C^{(j)})$ for all $j$.
By \eqref{generic} and \eqref{215}, we obtain
\begin{equation*}\begin{split}
(z-1)h^j(F/U) &\leq (z-1)h^j(F/U)+\Hilb(B^{(j)})+\Hilb(C^{(j)}) \\
&=h^{j-1}(F/(U+xF))=h^{j-1}(F_{[n-1]}/g_n(U))  \\
& \leq  h^{j-1}(F_{[n-1]}/\Gin(g_n(U))) 
=h^{j-1}(F_{[n-1]}/\Gin(U)_{[n-1]}) \\
&= h^{j-1}(F/(\Gin(U)+lF))=(z-1)h^j(F/\Gin(U)).
\end{split}
\end{equation*}
Thus, from our hypothesis it follows that the above inequalities are equalities for all $j \geq i$, that  means that $\Hilb(B^{(j)})=\Hilb(C^{(j)})=0$, i.e. $B^{(j)}=C^{(j)}=0$ for  $j\geq i$. Moreover, since $C^{(i-1)}=B^{(i)}=0$, it follows that $x$ is regular for all non-zero $\Ext_R^{n-j}(F/U,\omega_R)$ with  $j\geq i-1$, which thus have positive depth.

\noindent
From the above equalities we also get that $h^{j}(F_{[n-1]}/g_n(U)) = h^{j}(F_{[n-1]}/\Gin(g_n(U)))$ for all $j \geq i-1$; by induction, this implies that $F/(U+xF)\cong F_{[n-1]}/g_n(U) $ is $(i-1)$-sCM.  

\noindent The conclusion follows now by a straightforward application of Proposition \ref{Ale}.
\end{proof}

As a corollary, we immediately obtain Theorem \ref{HS Intro}.

\begin{theorem}\label{HeSb}
Let $M$ be a finitely generated graded $R$-module, and let $M \cong F/U$ a free graded presentation of $M$. Then, $F/U$ is sequentially Cohen-Macaulay if and only if $h^i(F/U)=h^i(F/\Gin(U))$ for all $j \geq 0$. 
\end{theorem}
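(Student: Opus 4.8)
The plan is to read off this statement as the special case $i=0$ of Theorem \ref{partiallySCM}, so that essentially no new work is required beyond a small amount of bookkeeping. First I would invoke the observation recorded right after Definition \ref{Def i-sCM}, namely that a module is $0$-sCM if and only if it is sequentially Cohen-Macaulay. Hence, applying Theorem \ref{partiallySCM} with $i=0$ to the given presentation $M \cong F/U$ yields at once that $F/U$ is sequentially Cohen-Macaulay if and only if $h^j(F/U)=h^j(F/\Gin(U))$ for all $0 \le j \le d$, where $d=\dim(F/U)$.

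It then remains only to extend the range of indices from $\{0,\dots,d\}$ to all $j \ge 0$, which I would handle by Grothendieck vanishing. For $j>d$ we have $H^j_\mm(F/U)=0$, so $h^j(F/U)=0$; moreover, $F/U$ and $F/\Gin(U)$ share the same Hilbert series, hence the same dimension $d$, so likewise $h^j(F/\Gin(U))=0$ for $j>d$. Thus all the equalities with $j>d$ hold trivially as $0=0$, and the condition ``$h^j(F/U)=h^j(F/\Gin(U))$ for all $0 \le j \le d$'' is equivalent to the same equalities ranging over all $j \ge 0$.

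There is no genuine obstacle here, since the content is entirely inherited from Theorem \ref{partiallySCM}; the only points that deserve explicit mention are the identification of $0$-sCM with sequential Cohen-Macaulayness and the remark that $\dim(F/\Gin(U))=\dim(F/U)$, which together justify both replacing condition (1) of Theorem \ref{partiallySCM} by plain sequential Cohen-Macaulayness and enlarging the range of $j$ to all nonnegative integers. (I would also note that the bound ``$j \ge 0$'' in the displayed statement is the intended reading of the index written $i$ there.)
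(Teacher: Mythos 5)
Your proposal is correct and matches the paper exactly: the paper derives Theorem \ref{HeSb} as an immediate corollary of Theorem \ref{partiallySCM}, using precisely the identification of $0$-sCM with sequential Cohen-Macaulayness noted after Definition \ref{Def i-sCM}. Your additional remark that the equalities for $j>d$ hold trivially (since $\Gin(U)$ preserves the Hilbert series, hence the dimension, and Grothendieck vanishing applies to both modules) is the small bookkeeping step the paper leaves implicit, and it is handled correctly.
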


\noindent
One can  wonder whether the equality $h^i(F/U)=h^i(F/\Gin(U))$ is enough to imply that $F/U$ is $i$-sCM; however, this is not the case.

\begin{example}\label{giuliospezza} 
\begin{enumerate}[(1)]
\item Consider a graded Cohen-Macaulay $k[x_1,\dots,x_n]$-module $M_1$ of dimension $i$  and a graded non-sequentially Cohen-Macaulay $k[x_{i+2},\dots,x_n]$-module $N$. Let $M_2=N\otimes_k k[x_1,\ldots,x_{i+1}]$ and take $M=M_1 \oplus M_2$. Then $x_1,\ldots,x_{i+1}$ is a strictly filter-regular sequence for $M_2$, and it follows from Proposition \ref{hscor1.9} (1) that $M_2$ is not sequentially Cohen-Macaulay. By Corollary \ref{sumviceversa} also $M$ is not sequentially Cohen-Macaulay. On the other hand, since $\depth(M_2) > i$ we have that $H^i_\m(M) \cong H^i_\m(M_1)$. If we write $M_1 \cong F_1/U_1$ and $M_2 \cong F_2/U_2$, where $F_1$ and $F_2$ are graded free $R$-modules and $U_1,U_2$ are graded submodules, then $M_1 \oplus M_2 \cong F/U$ where $F=F_1 \oplus F_2$ and $U = U_1 \oplus U_2$, and it follows that $\Gin(U) = \Gin(U_1) \oplus \Gin(U_2)$. Since $M_1$ is Cohen-Macaulay, hence sequentially Cohen-Macaulay, and $\depth(F_2/\Gin(U_2)) = \depth(M_2)>i$ we therefore conclude by Theorem \ref{HeSb} that $h^i(F/U) = h^i(F_1/U_1) = h^i(F_1/\Gin(U_1)) = h^i(F/\Gin(U))$. 
\item  The following is another explicit example of such instance in the ideal case.
  
\noindent Consider the polynomial ring  $R=k[x_1,x_2,x_3,x_4,x_5,x_6,x_7]$ and the monomial ideal $I=(x_1^3, x_1^2 x_2 x_4, x_1x_5, x_1x_6, x_2x_5, x_2x_6, x_2^2 x_7^2, x_3x_5, x_3x_6, x_3x_7, x_4x_5, x_4 x_6, x_4 x_7 , x_7^3)$. Then, one can check that $\depth(R/I)=0$ and $\dim(R/I)=3$; moreover,  $h^j(R/I)=h^j(R/\Gin(I))$ for $j=0,3$, and   $h^j(R/I)\neq h^j(R/\Gin(I))$ for $j=1,2$. By Theorem \ref{partiallySCM}, this means that $R/I$ is $3$-sCM but not $2$-sCM and, {\em a fortiori}, not $1$-sCM.
\end{enumerate}
\end{example}

\noindent
On the other hand, if $I\lex$ is the lexicographic ideal associated with $I$, then the equality $h^i(R/I) = h^i(R/I\lex)$ ensures the $i$-partial sequential Cohen-Macaulayness of $R/I$. Notice that this is a stronger condition, though, since  $h^i(R/I) \leq h^i(R/\Gin(I)) \leq h^i(R/I\lex)$, coefficientwise, see \cite[Theorems 2.4 and 5.4]{Sb01}. Actually, in \cite[Theorem 4.4]{SS17} the following result is proved.

\begin{theorem}\label{lbutnol}
Let $i$ be a positive integer and $I$ a homogeneous ideal of $R$; then, the following conditions are equivalent:
\begin{enumerate}[(1)]
\item $h^i(R/I)=h^i(R/I\lex)$; 
\item $h^i(R/\Gin(I))=h^i(R/I\lex)$; 
\item $h^j(R/I)=h^j(R/I\lex)$ for all $j\geq i$.
\end{enumerate}
If any of the above holds, then $I$ is $i$-sCM.
\end{theorem}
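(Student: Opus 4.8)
The engine of the whole statement is the coefficientwise chain $h^j(R/I)\leq h^j(R/\Gin(I))\leq h^j(R/I\lex)$, valid for every $j$ by \cite[Theorems 2.4 and 5.4]{Sb01}, together with the fact that both $R/\Gin(I)$ and $R/I\lex$ are sequentially Cohen-Macaulay, being defined by weakly stable ideals (Proposition \ref{wsscm}). Two of the implications are then immediate. For (3)$\Rightarrow$(1) one specializes to $j=i$. For (1)$\Rightarrow$(2) one observes that, at level $i$, the equality of the two outer terms of the chain squeezes the middle term, forcing $h^i(R/\Gin(I))=h^i(R/I\lex)$ as well. Moreover, the final assertion is cheap once (3) is available: if $h^j(R/I)=h^j(R/I\lex)$ for all $j\geq i$, then the chain forces $h^j(R/I)=h^j(R/\Gin(I))$ for all $j\geq i$, in particular for $i\leq j\leq d$, and Theorem \ref{partiallySCM} yields that $R/I$ is $i$-sCM. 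The entire weight of the proof therefore rests on the remaining implication (2)$\Rightarrow$(3).

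The plan for (2)$\Rightarrow$(3) is to induct on $d=\dim(R/I)$, reproducing the generic hyperplane-section reduction used in the proof of Theorem \ref{partiallySCM}. After reducing to the case of positive depth via Lemma \ref{lemma SbSt}(1), I fix a sufficiently general linear form $x$ and pass to $R_{[n-1]}/g_n(I)$ through the isomorphism \eqref{generic}. The point is that the reduction interacts cleanly with both $\Gin$ and $I\lex$: by \eqref{215} the section of $\Gin(I)$ is $\Gin(g_n(I))$, and I will need the lexicographic analogue identifying the section of $I\lex$ with $(g_n(I))\lex$. Because $R/\Gin(I)$ and $R/I\lex$ are sequentially Cohen-Macaulay, a general linear form is regular on the top quotients of their dimension filtrations, so the long exact local cohomology sequences collapse to the clean relations $(z-1)h^j(R/\Gin(I))=h^{j-1}(R_{[n-1]}/\Gin(g_n(I)))$ and $(z-1)h^j(R/I\lex)=h^{j-1}(R_{[n-1]}/(g_n(I))\lex)$. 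Feeding the hypothesis $h^i(R/\Gin(I))=h^i(R/I\lex)$ into these at level $i$ produces exactly the hypothesis of the theorem for the ideal $g_n(I)$ in $n-1$ variables at the lower threshold $i-1$.

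Applying the inductive hypothesis to $g_n(I)$ then gives the full conclusion in one fewer variable: $h^j(R_{[n-1]}/g_n(I))=h^j(R_{[n-1]}/(g_n(I))\lex)$ for all $j\geq i-1$, together with the $(i-1)$-partial sequential Cohen-Macaulayness of $g_n(I)$. Combining the first batch of equalities with the two clean relations above lifts the gin--lex equality back up, giving $h^j(R/\Gin(I))=h^j(R/I\lex)$ for all $j\geq i$. It remains to bring $R/I$ itself into play: I invoke Proposition \ref{Ale}, whose hypotheses are that the section $R/(I+xR)\cong R_{[n-1]}/g_n(I)$ be $(i-1)$-sCM (just obtained) and that $\depth\big(\Ext^{n-\ell}_R(R/I,R)\big)>0$ for all $\ell\geq i-1$. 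The latter is extracted, as in the proof of Theorem \ref{partiallySCM}, from the vanishing of the boundary correction terms in the hyperplane-section sequences, which is in turn forced by the equalities already established. Proposition \ref{Ale} then certifies that $R/I$ is $i$-sCM, whence $h^j(R/I)=h^j(R/\Gin(I))=h^j(R/I\lex)$ for all $j\geq i$ by Theorem \ref{partiallySCM} and the chain, which is precisely (3).

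The step I expect to be the main obstacle is twofold and lies entirely in the inductive reduction. First, unlike $\Gin$, the lexicographic ideal is not coordinate-free, so the identification of the section of $I\lex$ with $(g_n(I))\lex$---the lexicographic counterpart of \eqref{215}---cannot be read off from a coordinate change and must be established separately, using the extremality of lex ideals for the local cohomology Hilbert function in a fixed number of variables. Second, and more delicate, is verifying the depth hypotheses of Proposition \ref{Ale}: one must show that for a general $x$ the relevant correction modules have finite length and that the equalities propagated from the induction force them to vanish, so that $x$ is regular on all the deficiency modules $\Ext^{n-\ell}_R(R/I,R)$ with $\ell\geq i-1$. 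Controlling these correction terms---precisely the point where the coordinate-free gin inequality alone is too weak and the sharper lex equality is essential---is the technical heart of the argument.
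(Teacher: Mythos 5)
Your handling of the routine parts is correct: (3)$\Rightarrow$(1) is a specialization, (1)$\Rightarrow$(2) follows by squeezing the chain $h^i(R/I)\leq h^i(R/\Gin(I))\leq h^i(R/I\lex)$ of \cite{Sb01}, and the last assertion does follow from (3) together with that chain and Theorem~\ref{partiallySCM}. Keep in mind that the paper itself offers no proof of this statement --- it is quoted verbatim from \cite[Theorem 4.4]{SS17} --- so everything rests on whether your argument for the one hard implication, (2)$\Rightarrow$(3), stands on its own. It does not: it is built on an identity that is false.

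The identity in question is your ``lexicographic counterpart of \eqref{215}'', namely that the hyperplane section of $I\lex$ computes the same local cohomology as $(g_n(I))\lex$, so that $(z-1)h^j(R/I\lex)=h^{j-1}(R_{[n-1]}/(g_n(I))\lex)$. You flag it as the main obstacle and hope to prove it from the extremality of lex ideals, but no proof can exist, because sectioning does not commute with taking lex ideals even at the level of Hilbert functions: by Green's hyperplane restriction theorem \cite{Gre98} one only has $\Hilb(R_{[n-1]}/g_n(I))\leq\Hilb\big(R_{[n-1]}/(I\lex)_{[n-1]}\big)$ coefficientwise, and the inequality can be strict. Concretely, let $I\subset R=k[x_1,x_2,x_3]$ be the ideal of four general points of $\PP^2$, so that $\Hilb(R/I)=(1+2z+z^2)/(1-z)$. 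Then $I\lex=(x_1^2,x_1x_2,x_1x_3^2,x_2^4)$, whose image modulo $x_3$ is $(x_1^2,x_1x_2,x_2^4)$ with quotient Hilbert series $1+2z+z^2+z^3$, whereas $\Hilb(R_{[n-1]}/g_n(I))=1+2z+z^2$, whence $(g_n(I))\lex=(x_1^2,x_1x_2,x_2^3)$; the two ideals of $k[x_1,x_2]$ do not even share a Hilbert function, and indeed $(z-1)h^1(R/I\lex)=1+z+z^2+z^3\neq 1+2z+z^2=h^0(R_{[n-1]}/(g_n(I))\lex)$. The genuine collapse relation is $(z-1)h^j(R/I\lex)=h^{j-1}$ of the \emph{section of} $I\lex$, so what hypothesis (2) actually gives after sectioning is $h^{i-1}(R_{[n-1]}/\Gin(g_n(I)))=h^{i-1}\big(R_{[n-1]}/(I\lex)_{[n-1]}\big)$; since $(I\lex)_{[n-1]}$ is the lex ideal of the wrong Hilbert function, this is not hypothesis (2) for $g_n(I)$ at level $i-1$, and your induction cannot be restarted. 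Your preliminary reduction to positive depth suffers from the same disease: saturating changes the Hilbert function and $(I\sat)\lex\neq(I\lex)\sat$ in general, so the gin--lex hypothesis does not pass to $I\sat$. Any correct proof must handle the gin--lex comparison by a mechanism other than generic sections; a first step in that direction, which your argument never exploits, is that $I$ and $\Gin(I)$ have the same Hilbert function, hence $\Gin(I)\lex=I\lex$, so that (2) is really a statement comparing a weakly stable ideal with its own lex ideal.
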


We conclude this section by observing that the conditions in the previous theorem are still equivalent if we replace $\Gin(I)$ with $\Gin_0(I)$, the zero-generic initial ideal of $I$ introduced in \cite{CS15}.
We also remark that the equivalence between conditions (2) and (3)  is not true if we replace the Hilbert series of local cohomology modules with graded Betti numbers, see \cite[Theorem 3.1]{MH06}.

\section{E-depth} \label{Section Edepth}
As in the previous section,  $k$ will denote an infinite field, $R=k[x_1,\ldots,x_n]$ a standard graded polynomial ring and $\m= (x_1,\ldots,x_n)$ its homogeneous maximal ideal. As before, when $M=0$ we let $\dim(M)=-1$ and $\depth(M)=\infty$.  We start with the main definition of this section.

\begin{definition} \label{Def E-depth} 
Given a non-zero finitely generated graded $\ZZ$-module $M$ and a non-negative integer $r$, we say that $M$ satisfies condition $(E_r)$ if  $\depth(\Ext^i_R(M,R)) \geq \min\{r,n-i\}$ for all $i \in \ZZ$. We let
\[
\Edepth(M) = \min\left\{n, \sup\{r \in \NN \mid M \text{ satisfies } (E_r)\}\right\}.
\]
\end{definition}

\begin{remark} \label{Remark E-depth}
Observe that a non-zero module $M$ is sequentially Cohen-Macaulay if and only if it satisfies condition $(E_r)$ for all $r \geq 0$, see Theorem \ref{peskine}. It is therefore clear that a sequentially Cohen-Macaulay $R$-module has $\Edepth$ equal to $n$. The converse is also true, since if $\Edepth(M)=n$, then $\depth(\Ext^i_R(M,R)) \geq n-i$; as $\dim(\Ext^i_R(M,R)) \leq n-i$ always holds, cf. Remark \ref{remark jsCM}, this implies the claim.
\end{remark}

\begin{lemma} \label{lemma ses Edepth} Let $M \ne 0$ be a finitely generated $\ZZ$-graded $R$-module with positive depth and E-depth, and let $\ell$ be a linear form which is a strictly filter regular for $M$; then, the graded short exact sequence $0 \lra M(-1) \stackrel{\cdot \ell}{\lra} M \lra M/\ell M \lra 0$ induces graded short exact sequences 
\[
0 \lra  \Ext^i_R(M,R) \stackrel{\cdot \ell}{\lra}   \Ext^i_R(M,R)(1) \lra \Ext^{i+1}_R(M/\ell M,R) \lra 0,\,\,\, \text{ for all } i<n,
\]
and 
\[
0 \lra H^{i-1}_\m(M/\ell M) \lra  H^i_\m(M)(-1) \stackrel{\cdot \ell}{\lra}  H^i_\m(M) \lra 0,\,\,\, \text{ for all } i>0.
\]
\end{lemma}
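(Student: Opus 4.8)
The plan is to distill from the two hypotheses, $\depth(M)>0$ and $\Edepth(M)\geq 1$, the two nonzerodivisor facts that cause both long exact sequences to split, and then to obtain the local cohomology sequences from the $\Ext$ ones by graded local duality.

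First I would check that $\ell$ is in fact $M$-regular. By Remark \ref{remark SF -> F} a strictly filter regular element is filter regular, so $0:_M\ell$ has finite length; since $\depth(M)>0$ forces $H^0_\m(M)=0$, we get $0:_M\ell\subseteq H^0_\m(M)=0$, and hence $0\to M(-1)\xrightarrow{\cdot\ell}M\to M/\ell M\to 0$ is exact. Next I would show that $\ell$ is a nonzerodivisor on $\Ext^i_R(M,R)$ for every $i<n$. Condition $(E_1)$, which holds because $\Edepth(M)\geq 1$, gives $\depth\Ext^i_R(M,R)\geq\min\{1,n-i\}=1$ for $i<n$, so $\m\notin\Ass(\Ext^i_R(M,R))$; thus every associated prime of these modules lies in $\Ass^\circ(X(M))$, where $X(M)=\bigoplus_i\Ext^i_R(M,R)$, and strict filter regularity of $\ell$ means precisely that $\ell$ avoids all such primes. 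I would also record that $\Ext^n_R(M,R)=0$, being up to a shift the Matlis dual of $H^0_\m(M)=0$, and that $\Ext^i_R(M,R)=0$ for $i>n$.

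With these facts in hand the $\Ext$ sequences are immediate. Applying $\Ext^\bullet_R(-,R)$ to the short exact sequence and using $\Ext^i_R(M(-1),R)=\Ext^i_R(M,R)(1)$ yields a long exact sequence whose internal maps $\Ext^i_R(M,R)\to\Ext^i_R(M,R)(1)$ are multiplication by $\ell$. For $i<n$ this map is injective by the nonzerodivisor property, while surjectivity of $\Ext^i_R(M,R)(1)\to\Ext^{i+1}_R(M/\ell M,R)$ is equivalent to injectivity of $\cdot\ell$ on $\Ext^{i+1}_R(M,R)$, which again holds (a nonzerodivisor when $i+1<n$, and the module vanishes when $i+1\geq n$). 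Hence the long exact sequence breaks into the asserted short exact sequences for all $i<n$.

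Finally I would deduce the local cohomology sequences by duality. Graded local duality \cite[Theorem 3.6.19]{BH93}, together with $\omega_R\cong R(-n)$, gives, naturally in $N$, an isomorphism $H^j_\m(N)\cong\Ext^{n-j}_R(N,R)^\vee(n)$, where $(-)^\vee$ is the exact contravariant graded Matlis dual. Applying $(-)^\vee$ to the $\Ext$ short exact sequence, shifting by $(n)$, and substituting $i=n-j$ (so that $i<n$ corresponds to $j>0$) converts it, after the routine shift bookkeeping $(P(1))^\vee=P^\vee(-1)$, into $0\to H^{j-1}_\m(M/\ell M)\to H^j_\m(M)(-1)\xrightarrow{\cdot\ell}H^j_\m(M)\to 0$ for all $j>0$. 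The one point demanding care is exactly this last step: one must invoke the naturality of local duality to guarantee that the dualized maps are literally multiplication by $\ell$ and the connecting homomorphism, so that the dualized sequence is the asserted one and not merely isomorphic to it. Everything else is a mechanical splitting of long exact sequences once the two nonzerodivisor facts are established.
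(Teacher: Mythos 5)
Your proof is correct and follows essentially the same route as the paper: apply $\Hom_R(-,R)$ to the multiplication sequence, use positive E-depth (condition $(E_1)$) plus strict filter regularity to make $\ell$ a nonzerodivisor on $\Ext^i_R(M,R)$ for $i<n$ so the long exact sequence splits, then dualize via graded local duality to get the local cohomology sequences. Your write-up is in fact slightly more careful than the paper's on two points it leaves implicit — the vanishing $\Ext^n_R(M,R)=0$ needed for surjectivity at the top index, and the naturality of duality ensuring the dualized maps are again $\cdot\ell$ and the connecting homomorphism.
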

\begin{proof}
Consider the induced long exact sequence of $\Ext$ modules
\[
\xymatrix{
\ldots \ar[r] & \Ext^i_R(M,R) \ar[r]^-{\cdot \ell} & \Ext^i_R(M(-1),R) \ar[r] & \Ext^{i+1}_R(M/\ell M,R) \ar[r] & \ldots \,\,,
}
\]
and observe that $\Ext^i_R(M(-1),R) \cong \Ext^i_R(M,R)(1)$. Our assumption that $\Edepth(M)>0$ guarantees that $\depth(\Ext^i_R(M,R))>0$ for all $i<n$. Since $\ell$ is  strictly filter regular  for $M$, the multiplication by $\ell$ is injective on $\Ext^i_R(M,R)$ for all $i<n$, and the long exact sequence breaks into short exact sequences, as claimed, and the graded short exact sequences of local cohomology modules are obtained by graded Local Duality. 
\end{proof}

\begin{proposition} \label{prop Edepth} Let $M$ $\neq 0$ be a finitely generated $\ZZ$-graded $R$-module.
\begin{enumerate}[(1)]
\item $\Edepth(M) = \Edepth(M/H^0_\m(M))$.
\item Assume that $\Edepth(M)>0$  and $\ell$ is  homogeneous strictly filter regular for $M$; then, either $\Edepth(M/(H^0_\m(M)+\ell M)) = \Edepth(M)=n$, or $\Edepth(M/(H^0_\m(M)+\ell M)) = \Edepth(M)-1$.
\end{enumerate}
\end{proposition}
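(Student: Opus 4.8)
The plan is to handle the two parts separately: Part (1) rests on the finite length of $H^0_\m(M)$, and Part (2) combines the reduction from Part (1) with Lemma \ref{lemma ses Edepth} and a careful depth count.

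For Part (1), set $N = M/H^0_\m(M)$ and apply $\Hom_R(-,R)$ to the short exact sequence $0 \to H^0_\m(M) \to M \to N \to 0$. Since $H^0_\m(M)$ has finite length, its only non-vanishing $\Ext$ against $R$ sits in homological degree $n$, so $\Ext^i_R(H^0_\m(M),R)=0$ for every $i<n$; the long exact sequence then yields $\Ext^i_R(N,R)\cong\Ext^i_R(M,R)$ for all $i<n$. Because the defining inequality of $(E_r)$ at $i=n$ reads $\depth(\Ext^n_R(-,R))\ge\min\{r,0\}=0$ and is automatic, the conditions $(E_r)$ for $M$ and for $N$ coincide for every $r$, whence $\Edepth(M)=\Edepth(N)$. (If $M$ has finite length then $N=0$ and the statement degenerates; otherwise $N\neq 0$ and $\depth(N)>0$ since $H^0_\m(N)=0$.)

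For Part (2), I would first reduce to $N=M/H^0_\m(M)$: the image of $H^0_\m(M)+\ell M$ in $N$ is $\ell N$, so $M/(H^0_\m(M)+\ell M)\cong N/\ell N$, and by Part (1) we have $\Edepth(N)=\Edepth(M)=:r>0$. One checks $\ell$ stays strictly filter regular for $N$, since $\Ass^\circ(X(N))\subseteq\Ass^\circ(X(M))$ (the $\Ext^i$ agree for $i<n$, and $\Ext^n_R(N,R)$ embeds into $\Ext^n_R(M,R)$). As $\depth(N)>0$ and $\Edepth(N)>0$, Lemma \ref{lemma ses Edepth} applies to $N$ and gives $\Ext^{i+1}_R(N/\ell N,R)\cong\big(\Ext^i_R(N,R)/\ell\,\Ext^i_R(N,R)\big)(\deg\ell)$ for all $i<n$ (the proof is insensitive to $\deg\ell$, only the internal twist changes). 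The decisive point is that $(E_r)$ with $r\ge 1$ forces $\depth(\Ext^i_R(N,R))\ge\min\{r,n-i\}\ge 1$ for every $i<n$; as $\ell$ is strictly filter regular, $0:_{\Ext^i_R(N,R)}\ell$ has finite length and hence vanishes, so $\ell$ is actually a non-zerodivisor on each such $\Ext^i_R(N,R)$. Therefore $\depth\big(\Ext^i_R(N,R)/\ell\,\Ext^i_R(N,R)\big)=\depth(\Ext^i_R(N,R))-1\ge\min\{r-1,n-i-1\}$, which translates into $(E_{r-1})$ for $N/\ell N$ and yields the lower bound $\Edepth(N/\ell N)\ge r-1$.

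It remains to pin down the exact value, and this dichotomy is the main obstacle. If $r=n$, then $N$ is sequentially Cohen-Macaulay, so each non-zero $\Ext^i_R(N,R)$ is Cohen-Macaulay of dimension $n-i$ (using $\dim\le n-i$ always, cf. Remark \ref{remark jsCM}); modding out the regular element $\ell$ keeps it Cohen-Macaulay of dimension $n-i-1$, so $N/\ell N$ satisfies $(E_n)$ and $\Edepth(N/\ell N)=n$ (alternatively invoke Proposition \ref{hscor1.9}(1)). If instead $r<n$, the failure of $(E_{r+1})$ for $N$ furnishes an index $i^*<n$ with $\depth(\Ext^{i^*}_R(N,R))=r$ and $n-i^*\ge r+1$; then $\Ext^{i^*+1}_R(N/\ell N,R)$ has depth $r-1<\min\{r,n-i^*-1\}=r$, so $(E_r)$ fails for $N/\ell N$ and the lower bound is sharp, giving $\Edepth(N/\ell N)=r-1$. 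The delicate step is exactly this bookkeeping at the boundary: one must use $\Edepth>0$ to upgrade ``filter regular'' to ``non-zerodivisor'' on the relevant $\Ext$ modules, and then separate the Cohen-Macaulay-forced case $r=n$ (where the unit of depth is recovered) from the generic case $r<n$ (where exactly one unit is lost).
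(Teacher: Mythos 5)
Your proof is correct and follows essentially the same route as the paper's: part (1) via the long exact sequence of $\Ext$ modules and the vacuity of the $(E_r)$ condition at $i=n$, and part (2) by reducing to $N=M/H^0_\m(M)$, splitting into the sequentially Cohen-Macaulay case $\Edepth(M)=n$ and the case $r<n$, where Lemma \ref{lemma ses Edepth} gives the depth drop $\depth(\Ext^{i+1}_R(N/\ell N,R))=\depth(\Ext^i_R(N,R))-1$. The only difference is that you spell out details the paper leaves implicit — that $\ell$ remains strictly filter regular for $N$, the harmlessness of $\deg \ell>1$, and the index bookkeeping showing $(E_{r-1})$ holds while $(E_r)$ fails — which the paper compresses into ``which clearly implies.''
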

\begin{proof}
We first prove (1); clearly, we may assume that $H^0_\m(M) \ne 0$. Applying the functor $\Hom_R(-,R)$ to the short exact sequence $0 \lra H^0_\m(M) \lra M \lra M/H^0_\m(M) \lra 0$ we get a long exact sequence of $\Ext$ modules, from which we obtain that $\Ext^{n-i}_R(M,R) \cong \Ext^{n-i}_R(M/H^0_\m(M),R)$ for all $i \ne 0$ because $H^0_\m(M)$ has finite length; the first statement is now clear, since  $\depth(\Ext^n_R(H^0_\m(M),R))=0$ .

\medskip
For the proof of (2), let $N=M/H^0_\m(M)$; by (1), $M$ and $N$ have same E-depth and $\Ext^{i}_R(M,R) \cong \Ext^{i}_R(N,R)$ for all $i\neq n$. First assume that $M$ is sequentially Cohen-Macaulay, i.e., $\Edepth(M)=n$. Then, $N$ is sequentially Cohen-Macaulay by Remark \ref{Remark E-depth}  or by Corollary \ref{coroll H0}, and since $\ell$ is filter regular for $M$ by Remark \ref{remark SF -> F}, $\ell$ is $N$-regular. It follows from  Proposition \ref{hscor1.9} that $M/(H^0_\m(M) + \ell M)$ is sequentially Cohen-Macaulay and, therefore, has E-depth equal to $n$.

\noindent Now assume that $0<\Edepth(N)=\Edepth(M) = r<n$; from an application of  Lemma \ref{lemma ses Edepth}  we obtain short exact sequences $0 \to \Ext^i_R(N,R) \stackrel{\cdot \ell}{\to} \Ext^i_R(N,R)(1) \to \Ext^{i+1}_R(N/\ell N,R) \to 0$  and,   therefore, $\Ext^{i+1}_R(N/\ell N,R) \cong \Ext^i_R(M,R)(1)/\ell \Ext^i_R(M,R)$ for all $i<n$. It follows that $\depth(\Ext^{i+1}_R(N/\ell N,R)) = \depth(\Ext^i_R(M,R))-1$ for all $i<n$, which clearly implies  $\Edepth(N/\ell N) = \Edepth(M)-1$, as desired.
\end{proof}

We now introduce a special grading on a polynomial ring  which refines the standard grading, and that can be further refined to the monomial $\ZZ^n$-grading.

\begin{definition}
 Let $r$ be a positive integer, $A$ be a $\ZZ$-graded ring and  $S=A[y_1,\ldots,y_r]$ a polynomial ring over $A$. For $i \in \{0,\ldots,r\}$  let $\eta_i \in \ZZ^{r+1}$ be the vector whose $(i+1)$-st entry is $1$, and all other entries equal $0$. We consider $S$ as a $\ZZ \times \ZZ^r$-graded ring by letting
    \[
    \deg_S(a) = \deg_A(a) \cdot \eta_0,\,\,\text{for all }\,  a \in A\,\,\, \text{ and }\,  \deg_S(y_i) = \eta_i, \,\,\text{ for } i \in \{1,\ldots,r\}.
  \]

\end{definition}
By means of the previous definition,  we may consider $R=k[x_1,\ldots,x_n]$ as a $\ZZ \times \ZZ^r$-graded ring for any $0 \leq r \leq n-1$ by letting $\deg_R(x_i) = \eta_0$ for all $1 \leq i \leq n-r$ and $\deg_R(x_i) = \eta_i$ for all $n-r+1 \leq i \leq n$. Observe that an element $f \in R$ is graded with respect to such grading if and only if $f$ can be written as $f=\overline{f} \cdot u$, where $\overline{f} \in k[x_1,\ldots,x_{n-r}]$ is homogeneous with respect to the standard grading, and $u$ is a monomial in $k[x_{n-r+1},\ldots,x_n]$. In particular, when $r=0$ this is just the standard grading on $R$, while for $r=n-1$ it coincides with the monomial $\ZZ^n$-grading. 

\begin{remark} We can extend in a natural way such a grading  to free $R$-modules $F$ with a basis by assigning degrees to the elements of the given basis.  Accordingly, any $R$-module $M$ is $\ZZ \times \ZZ^r$-graded if and only if $M \cong F/U$, where $F$ is a free $\ZZ \times \ZZ^r$-graded $R$-module and $U$ is a $\ZZ \times \ZZ^r$-graded submodule of $F$.
\end{remark}

We see next that the grading just introduced  is very relevant to the purpose of estimating the E-depth of a module; the following can be seen as  a refinement of Proposition \ref{wsscm}.
\begin{proposition} \label{Edepth graded}
Let $R=k[x_1,\ldots,x_n]$, and $M$ be a finitely generated $\ZZ \times \ZZ^r$-graded $R$-module such that $x_n,\ldots,x_{n-r+1}$ is a filter regular sequence for $M$. Then, $x_n,\ldots,x_{n-r+1}$ is a strictly filter regular sequence for $M$, and $\Edepth(M) \geq r$.
\end{proposition}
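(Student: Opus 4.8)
The plan is to prove the two assertions separately, in each case first analysing the single variable $x_n$ and then inducting on $r$. The decisive feature of the $\ZZ\times\ZZ^r$-grading is that $x_n$ is the \emph{only} variable carrying the degree $\eta_r$; I would exploit this by also viewing $M$ as a $\ZZ$-graded module through its $x_n$-degree, writing $R=A[x_n]$ with $A=k[x_1,\dots,x_{n-1}]$ placed in $x_n$-degree $0$. Throughout, $M_d$ denotes the $x_n$-degree-$d$ component of $M$, an $A$-module.

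\textbf{Strict filter regularity.} Since $x_n$ is filter regular, $0:_Mx_n$ has finite length. As $M$ is finitely generated over $A[x_n]$, the multiplication maps $\cdot x_n\colon M_d\to M_{d+1}$ are surjective for $d\gg0$ and injective for $|d|\gg0$, hence isomorphisms outside a bounded range of $x_n$-degrees; consequently $0:_Mx_n$ and $M/x_nM$ are concentrated in finitely many $x_n$-degrees. Because $M/x_nM$ is $x_n$-torsion we have $H^j_\m(M/x_nM)=H^j_{\m_A}(M/x_nM)$, which is again concentrated in finitely many $x_n$-degrees; feeding this into the long exact sequence of $H^\bullet_\m$ attached to $\cdot x_n$ (and using that $0:_Mx_n$ has finite length) shows that $H^j_\m(M)/x_nH^j_\m(M)$ is bounded in $x_n$-degree, hence of finite length, for every $j$. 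Graded local duality identifies $0:_{\Ext^{n-j}_R(M,R)}x_n$ with the Matlis dual of $H^j_\m(M)/x_nH^j_\m(M)$, so $x_n$ is filter regular on every $\Ext^i_R(M,R)$, i.e. strictly filter regular for $M$. To reach the whole sequence I would pass to $M/x_nM$, which is $\ZZ\times\ZZ^{r-1}$-graded over $R'=k[x_1,\dots,x_{n-1}]$ and for which $x_{n-1},\dots,x_{n-r+1}$ is again filter regular; induction on $r$, together with the change-of-rings isomorphism $\Ext^i_R(M/x_nM,R)\cong\Ext^{i-1}_{R'}(M/x_nM,R')$ (which makes strict filter regularity over $R$ and over $R'$ coincide), then completes this part.

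\textbf{The E-depth bound.} For $\Edepth(M)\ge r$ I would induct on $r$, the case $r=0$ being trivial. Replacing $M$ by $M/H^0_\m(M)$ changes neither $\Edepth$ (Proposition \ref{prop Edepth}(1)) nor the non-maximal associated primes, so I may assume $\depth M>0$; then $x_n$ is $M$-regular. The quotient $M/x_nM$ falls under the inductive hypothesis, giving $\Edepth(M/x_nM)\ge r-1$, and the degree bookkeeping above shows this may be read over $R$. Since $x_n$ is strictly filter regular by the first part and $H^0_\m(M)=0$, Proposition \ref{prop Edepth}(2) gives either $\Edepth(M)=n\ge r$ or $\Edepth(M)=\Edepth(M/x_nM)+1\ge r$, as desired.

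\textbf{Main obstacle.} The one point that is not formal is that Proposition \ref{prop Edepth}(2) may be applied only after one knows $\Edepth(M)>0$, and this positivity is precisely the delicate step. The first part supplies that $0:_{\Ext^i_R(M,R)}x_n$ has finite length; what condition $(E_1)$ requires is the stronger vanishing $0:_{\Ext^i_R(M,R)}x_n=0$ for $i<n$, equivalently that $\cdot x_n$ be \emph{surjective} on $H^j_\m(M)$ for every $j\ge1$. This is exactly where the hypothesis that $x_n$ carries its own degree $\eta_r$ must be pushed to its limit, forcing the already $x_n$-degree–bounded cokernel $H^j_\m(M)/x_nH^j_\m(M)$ to be zero and not merely of finite length. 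I expect this vanishing—which furnishes the base case $\Edepth(M)\ge1$ of the induction—to be the heart of the argument, every other ingredient being a direct consequence of Proposition \ref{prop Edepth}, Lemma \ref{lemma ses Edepth}, and the reductions above.
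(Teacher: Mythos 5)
There is a genuine gap, and it sits exactly where you placed your ``main obstacle'': both halves of your argument ultimately rest on a finiteness/positivity statement that is never proved, and the step you offer as a substitute in Part 1 is invalid. The deduction ``$H^j_\m(M)/x_nH^j_\m(M)$ is bounded in $x_n$-degree, hence of finite length'' is a non sequitur: an Artinian graded module concentrated in finitely many $x_n$-degrees can have infinite length (for instance $H^{n-1}_\m(R/(x_n))$ is concentrated in $x_n$-degree $0$). Worse, the desired finiteness cannot be reached from the ingredients you actually use. Take $n=4$, $r=1$ and $M=(x_1,x_2)\cap(x_3,x_4)\subseteq R=k[x_1,\dots,x_4]$: this is a $\ZZ\times\ZZ$-graded module on which $x_4$ is regular, hence filter regular, yet $H^3_\m(M)\cong H^2_\m(R/(x_1,x_2))\oplus H^2_\m(R/(x_3,x_4))$ and $x_4$ annihilates the second summand, so $H^3_\m(M)/x_4H^3_\m(M)$ has infinite length; dually $\Ext^1_R(M,R)$ is, up to twists, $R/(x_1,x_2)\oplus R/(x_3,x_4)$, so $x_4$ is not strictly filter regular. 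This shows the grading cannot enter only through degree-boundedness, which is all your argument extracts from it: it must be used through a presentation $M\cong F/U$ in which the basis of $F$ lies in $x_n$-degree zero (equivalently, $M$ is generated in $x_n$-degree zero) -- which is how the paper's own proof reads the hypothesis, and which holds in every application made of the proposition, e.g.\ to $F/\gin_r(U)$ in Proposition \ref{prop gin and grading}.

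The missing idea is structural, not numerical. Writing $M=F/U$ as above, the $\ZZ\times\ZZ^r$-grading forces $U=\bigoplus_d x_n^d V_d$ for an increasing chain of $k[x_1,\dots,x_{n-1}]$-submodules $V_d$ of $\bigoplus_i k[x_1,\dots,x_{n-1}]e_i$, and saturating with respect to $x_n$ makes this chain constant; hence $M/H^0_\m(M)\cong \ov{M}\otimes_k k[x_n]$, where $\ov{M}=M/(H^0_\m(M)+x_nM)$ is a module over $R'=k[x_1,\dots,x_{n-1}]$. Flat base change then gives $\Ext^i_R(M,R)\cong \Ext^i_{R'}(\ov{M},R')\otimes_k k[x_n]$ for all $i<n$, so $x_n$ is an honest nonzerodivisor -- not merely filter regular -- on each of these modules. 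This one fact delivers simultaneously: the vanishing $0:_{\Ext^i_R(M,R)}x_n=0$ for $i<n$ (equivalently, surjectivity of $\cdot\, x_n$ on $H^j_\m(M)$ for $j\geq 1$), hence both the strict filter regularity of $x_n$ and the positivity $\Edepth(M)>0$ that you concede you cannot prove and without which Proposition \ref{prop Edepth}(2) can never be invoked; and, by iterating the same decomposition on the $\ZZ\times\ZZ^{r-1}$-graded module $\ov{M}$ (each iteration raising the relevant depths by one), the full inequality $\Edepth(M)\geq r$. Your outer reductions -- passing to $M/H^0_\m(M)$, the change-of-rings isomorphism $\Ext^i_R(M/x_nM,R)\cong\Ext^{i-1}_{R'}(M/x_nM,R')$, the induction on $r$ through Proposition \ref{prop Edepth} -- are sound, but they are only the shell of the paper's proof; the tensor decomposition of the saturation is its core, and nothing in your proposal replaces it.
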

\begin{proof}
  Write $M$ as $M=F/U$, where $F$ is a finitely generated $\ZZ \times \ZZ^r$-graded free $R$-module, and $U$ is a $\ZZ \times \ZZ^r$-graded submodule of $F$. Since $U$ is $\ZZ \times \ZZ^r$-graded, $x_n$ is $\ZZ\times \ZZ^r$-homogeneous and filter regular, $U\sat= U:_F x_n^{\infty}$ is also $\ZZ\times \ZZ^r$ graded and, accordingly $M/H^0_\m(M) \cong F/U\sat$ is too. If $F/U\sat=0$, then $M$ has dimension zero and, thus, is  sequentially Cohen-Macaulay; then  $\Edepth(M) = n \geq t$, and every sequence of non-zero elements of $(x_1,\ldots,x_n)$ is a strictly filter regular sequence for $M$. In particular, $x_n,\ldots,x_{n-r+1}$ is a strictly filter regular sequence for $M$.

Now suppose that $F/U\sat \ne 0$. By assumption, $x_n$ is filter regular for $M$, and thus regular for $F/U\sat$. Since the latter is $\ZZ \times \ZZ^r$-graded, we have that $F/U\sat \cong \ov{F}/\ov{U} \otimes_k k[x_n]$ for some $\ZZ \times \ZZ^{r-1}$-graded $\ov{R}=k[x_1,\ldots,x_{n-1}]$-module $\ov{F}$, and some $\ZZ\times \ZZ^{r-1}$-graded submodule $\ov{U}$ of $\ov{F}$ such that $\ov{F}/\ov{U}$ can be identified with the hyperplane section $F/U\sat \otimes_R R/(x_n)$. Thus, for $i<n$ we have
\[
\Ext^i_R(M,R) \cong \Ext^i_R(M/H^0_\m(M),R) \cong \Ext^i_R(F/U\sat,R) \cong \Ext_{\ov{R}}^i(\ov{F}/\ov{U},\ov{R}) \otimes_k k[x_n].
\]
It follows that $x_n$ is a non-zero divisor on $\Ext^i_R(M,R)$ for all $i<n$, and thus $\Edepth(M)>0$. Since $\Ext^n_R(F/U,R)$ has finite length it also follows that $x_n$ is a strictly filter regular element for $M$. Now we can consider $\ov{F}/\ov{U}$, and an iteration of this argument will imply the desired conclusion.
\end{proof}

We now introduce a weight order on $R$. Given integers $0 \leq r \leq n$, consider the following $r \times n$ matrix 
\[
\Omega_{r,n} = \begin{bmatrix} 0 & 0 & \ldots & 0 & 0 & 0 & 0 & \ldots & 0 & -1 \\
0 & 0 & \ldots & 0 & 0 & 0 & 0 & \ldots & -1 & 0 \\
\vdots & \vdots &  & \vdots & \vdots & \vdots &  \vdots & \iddots & \vdots &\vdots\\
0 & 0 & \ldots & 0 & 0 & 0 & -1 & \ldots & 0 & 0 \\
0 & 0 & \ldots & 0 & 0 & -1 & 0 & \ldots & 0 & 0 \\
0 & 0 & \ldots & 0 &-1 & 0 & 0 & \ldots & 0 & 0
\end{bmatrix},
\]

\smallskip    
\noindent
and let $\omega_i$ be its $i$-th row; then,  this induces a ``partial'' revlex order $\rev_r$ on $R$ by declaring that a monomial $x^{\ul{a}} = x_1^{a_1} \cdots x_n^{a_n}$ is greater than a monomial $x^{\ul b} = x_1^{b_1} \cdots x_n^{b_n}$ if and only if there exists $1 \leq j \leq r$ such that $\omega_i \cdot \ul{a} = \omega_i \cdot \ul{b}$ for all $i \leq j$ and $\omega_{j+1} \cdot \ul{a} > \omega_{j+1} \cdot \ul{b}$.

\begin{remark} \label{remark revlex} Observe that $\rev_n$ coincides with the usual revlex order on $R=k[x_1,\ldots,x_n]$. 
\end{remark}

Give  a graded free $R$-module $F$ with basis $\{e_1,\ldots,e_s\}$,  we can write any $f \in F$ uniquely as a finite sum $f =\sum u_j e_{i_j}$, where $u_j$ are monomials and we assume the sum has minimal support. We let the initial form ${\rm in}_{\rev_r}(f)$ of $f$ written as above  be the sum of those $u_j e_{i_j}$ for which $u_j$ is maximal with respect to the order $\rev_r$ introduced above.

\begin{definition} \label{def partial gin}
Let $F$ be a finitely generated graded free $R$-module, and $U$ be a graded submodule; let also $M=F/U$. We say that {\em the $r$-partial general initial submodule of $U$ satisfies a given property} {\bf {$\mathcal P$}} if there exists a non-empty Zariski open set $\mathcal L$ of $r$-uples of linear forms such that $F/{\rm in}_{\rev_r}(g_\ell(U))$ satisfies  {\bf {$\mathcal P$}} for any $\ell=(\ell_{n-r+1},\ldots,\ell_n) \in \mathcal{L}$, where $g_{\ell}$ is the automorphism on $F$ induced by the change of coordinates of $R$ which sends $\ell_i  \mapsto x_i$ and fixes the other variables.
\end{definition} 

With some abuse of notation,  we denote any partial initial submodule ${\rm in}_{\rev_r}(g_\ell(U))$ which satisfies  {\bf {$\mathcal P$}} {\it the} $r$-partial general submodule of $U$, and denote it by $\gin_r(U)$.

\begin{proposition} \label{prop gin and grading}
Let $F$ be a finitely generated graded free $R$-module, and $U \subseteq F$ be a graded submodule. For every $0 \leq r \leq n$ we have that $\Edepth(F/\gin_r(U)) \geq r$.
\end{proposition}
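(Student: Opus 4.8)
The plan is to reduce the statement to an application of Proposition \ref{Edepth graded}. That proposition bounds $\Edepth$ from below by $r$ for any $\ZZ \times \ZZ^r$-graded module for which $x_n,\ldots,x_{n-r+1}$ is a filter regular sequence, so it suffices to check that $M=F/\gin_r(U)$ is (i) $\ZZ \times \ZZ^r$-graded for the grading introduced before Proposition \ref{Edepth graded}, and (ii) such that $x_n,\ldots,x_{n-r+1}$ is a filter regular sequence for it. I would prove (i) and (ii) simultaneously by induction on $r$, the case $r=0$ being trivial since $\rev_0$ is the empty order, $\gin_0(U)=U$, and $\Edepth\geq 0$ always holds.

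For (i), the key observation is that $\rev_r$ assigns weights only to $x_{n-r+1},\ldots,x_n$: by the shape of $\Omega_{r,n}$, two monomials have the same $\rev_r$-weight precisely when they share the exponents of these last $r$ variables. Hence the initial form ${\rm in}_{\rev_r}(f)$ of a standard-homogeneous $f$ is a standard-homogeneous polynomial in $x_1,\ldots,x_{n-r}$ times a single monomial in $x_{n-r+1},\ldots,x_n$, so it is homogeneous for the $\ZZ \times \ZZ^r$-grading. Since the initial module $\gin_r(U)={\rm in}_{\rev_r}(g_\ell(U))$ is spanned by such initial forms, it is a $\ZZ \times \ZZ^r$-graded submodule of $F$, and thus $M$ is $\ZZ \times \ZZ^r$-graded.

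For (ii), I would first show that $x_n$ is filter regular for $M$. Choosing $\ell$ generic, the form $x_n=\ell_n$ is filter regular for $F/g_\ell(U)=:F/V$, so $(V:_F x_n)/V$ has finite length. The decisive point is that the top weight $\omega_1$ records the $x_n$-exponent, so that $\rev_r$ treats $x_n$ exactly as the revlex order does; this yields the colon identity ${\rm in}_{\rev_r}(V):x_n={\rm in}_{\rev_r}(V:_F x_n)$. Since forming initial modules preserves Hilbert functions, $(\gin_r(U):x_n)/\gin_r(U)$ has the same Hilbert function as $(V:_F x_n)/V$, hence finite length, and $x_n$ is filter regular for $M$. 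Next, exactly as in \eqref{215} (the module version of \cite[Corollary 2.15]{Gre98}), setting $x_n=0$ turns $\rev_r$ into $\rev_{r-1}$ on $R_{[n-1]}$ and identifies the image of $\gin_r(U)$ in $F_{[n-1]}$ with $\gin_{r-1}(g_n(U))$, so that $M/x_nM\cong F/(\gin_r(U)+x_nF)\cong F_{[n-1]}/\gin_{r-1}(g_n(U))$. By the inductive hypothesis in $n-1$ variables, $x_{n-1},\ldots,x_{n-r+1}$ is a filter regular sequence for this quotient; combined with the filter regularity of $x_n$, this proves that $x_n,\ldots,x_{n-r+1}$ is a filter regular sequence for $M$.

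With (i) and (ii) in hand, Proposition \ref{Edepth graded} gives $\Edepth(F/\gin_r(U))\geq r$, completing the induction. The hard part will be the two compatibility statements underlying (ii): the colon identity ${\rm in}_{\rev_r}(V):x_n={\rm in}_{\rev_r}(V:_F x_n)$ for the partial weight order, and the hyperplane identity relating the image of $\gin_r(U)$ in $F_{[n-1]}$ to $\gin_{r-1}(g_n(U))$. Both hinge on the fact that the first row of $\Omega_{r,n}$ singles out $x_n$ just as revlex does, and both require choosing the Zariski-open sets of coordinate changes defining the various partial gins compatibly, so that the generic hyperplane restriction $g_n$ and the passage from $\rev_r$ to $\rev_{r-1}$ can be carried out within a single generic choice. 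Once these are established, the grading statement (i) and the final invocation of Proposition \ref{Edepth graded} are routine.
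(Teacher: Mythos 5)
Your proposal is correct and takes essentially the same route as the paper: the paper likewise reduces to Proposition \ref{Edepth graded} by noting that $F/\gin_r(U)$ is $\ZZ \times \ZZ^r$-graded by construction and that $x_n,\ldots,x_{n-r+1}$ remains a filter regular sequence after passing to the initial module, the latter via the colon identity ${\rm in}_{\rev_r}(U):_F x_n = {\rm in}_{\rev_r}(U:_F x_n)$ (for which it cites \cite[15.7]{Eis95}). Your explicit induction on $r$, using the hyperplane-section identity to drop to $n-1$ variables, just spells out the iteration that the paper leaves implicit here (and carries out in detail only later, in the proof of Theorem \ref{main thm Edepth}).
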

\begin{proof}
For a sufficiently general change of coordinates we have that $x_n,\ldots,x_{n-r+1}$ form a filter regular sequence for $F/(g_\ell(U))$. Since ${\rm in}_{\rev_r}(U):_Fx_n = {\rm in}_{\rev_r}(U:_Fx_n)$, see \cite[15.7]{Eis95}, we have that $x_n,\ldots,x_{n-r+1}$ also form a filter regular sequence for $F/{\rm in}_{\rev_r}(g_\ell(U))$, and hence for $F/\gin_r(U)$. By construction, the module $F/\gin_r(U)$ is $\ZZ \times \ZZ^r$-graded, and the claim now follows from Proposition \ref{Edepth graded}.
\end{proof}

\begin{theorem} \label{main thm Edepth}
Let $F$ be a finitely generated graded free $R$-module, and $U \subseteq F$ be a graded submodule. For every $0 \leq r \leq n$ we have that $\Edepth(F/U) \geq r$ if and only if $h^i(F/U) = h^i(F/\gin_r(U))$ for all $i \in \NN$.
\end{theorem}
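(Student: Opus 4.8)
The plan is to prove the equivalence by induction on $r$, observing first that the extreme case $r=n$ recovers Theorem~\ref{HeSb}: by Remark~\ref{remark revlex} the order $\rev_n$ is the revlex order, so $\gin_n(U)=\Gin(U)$, while $\Edepth(F/U)\geq n$ is sequential Cohen-Macaulayness by Remark~\ref{Remark E-depth}. The base case $r=0$ is trivial, since $\gin_0(U)=U$ and $\Edepth(F/U)\geq 0$ always holds. For the inductive step I would fix a general linear form and, after the general change of coordinates built into the definition of $\gin_r$, identify the hyperplane section with the specialization $g_n\colon R\to R_{[n-1]}$ exactly as in the proof of Theorem~\ref{partiallySCM}, so that $F/(U+xF)\cong F_{[n-1]}/g_n(U)$ as in \eqref{generic}.

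Three facts then drive the induction: (i) the isomorphism \eqref{generic}; (ii) a \emph{partial} crystallization identity $\gin_r(U)_{[n-1]}=\gin_{r-1}(g_n(U))$, generalizing \eqref{215} (which is its $r=n$ instance), whence $F/(\gin_r(U)+x_nF)\cong F_{[n-1]}/\gin_{r-1}(g_n(U))$; and (iii) the fact that $F/\gin_r(U)$ has $\Edepth\geq r>0$ and admits $x_n$ as a strictly filter regular element by Proposition~\ref{prop gin and grading}, so Lemma~\ref{lemma ses Edepth} yields clean short exact sequences in local cohomology which, combined with (ii), give
\begin{equation}\label{planA}
h^{j-1}\big(F_{[n-1]}/\gin_{r-1}(g_n(U))\big)=(z-1)\,h^{j}(F/\gin_r(U))\qquad(j\geq 1).
\end{equation}
For $F/U$ itself, the long exact sequence induced by a general $x$ gives, for some finite length modules $B^{(j)},C^{(j)}$, the relation $h^{j-1}(F_{[n-1]}/g_n(U))=(z-1)h^{j}(F/U)+\Hilb(B^{(j)})+\Hilb(C^{(j)})$ for $j\geq 1$. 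Using in addition the degeneration inequality $h^{j-1}(F_{[n-1]}/g_n(U))\leq h^{j-1}(F_{[n-1]}/\gin_{r-1}(g_n(U)))$, i.e.\ upper semicontinuity of local cohomology under passage to an initial submodule (cf.\ \cite{Sb01}), together with \eqref{planA}, I obtain for all $j\geq 1$ the chain
\[
(z-1)\,h^{j}(F/U)\ \leq\ h^{j-1}\big(F_{[n-1]}/g_n(U)\big)\ \leq\ h^{j-1}\big(F_{[n-1]}/\gin_{r-1}(g_n(U))\big)\ =\ (z-1)\,h^{j}(F/\gin_r(U)).
\]

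For $(2)\Rightarrow(1)$ I would use that the two outer terms coincide by hypothesis, forcing $B^{(j)}=C^{(j)}=0$ and $h^{i}(F_{[n-1]}/g_n(U))=h^{i}(F_{[n-1]}/\gin_{r-1}(g_n(U)))$ for all $i$; induction then gives $\Edepth(F_{[n-1]}/g_n(U))\geq r-1$, while the vanishing of $B^{(j)},C^{(j)}$ says, by Local Duality, that $x$ is a nonzerodivisor on $\Ext^i_R(F/U,R)$ for all $i<n$. Feeding this into the sequences $0\to\Ext^i_R(F/U,R)\stackrel{\cdot x}{\lra}\Ext^i_R(F/U,R)(1)\to\Ext^{i+1}_R\big((F/U)/x(F/U),R\big)\to 0$ and the change-of-rings isomorphism $\Ext^{i+1}_R((F/U)/x(F/U),R)\cong\Ext^{i}_{R_{[n-1]}}(F_{[n-1]}/g_n(U),R_{[n-1]})$, I get $\depth\Ext^i_R(F/U,R)=\depth\Ext^{i}_{R_{[n-1]}}(F_{[n-1]}/g_n(U),R_{[n-1]})+1\geq\min\{r-1,n-1-i\}+1=\min\{r,n-i\}$ for $i<n$, the case $i=n$ being automatic as $\Ext^n$ has finite length; this is exactly $\Edepth(F/U)\geq r$. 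For $(1)\Rightarrow(2)$, Proposition~\ref{prop Edepth}(2), together with the finite-length comparison $\Ext^i_R((F/U)/x(F/U),R)\cong\Ext^i_R\big((F/U)/(H^0_\m(F/U)+x(F/U)),R\big)$ for $i<n$, gives $\Edepth(F_{[n-1]}/g_n(U))\geq r-1$, so by induction the middle inequality above is an equality; moreover $\Edepth(F/U)\geq r>0$ makes the sequences for $F/U$ clean ($B^{(j)}=C^{(j)}=0$) as well, so the chain collapses to $(z-1)h^j(F/U)=(z-1)h^j(F/\gin_r(U))$ and hence $h^j(F/U)=h^j(F/\gin_r(U))$ for all $j\geq 1$. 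The remaining equality in degree $0$ is then free: passage to an initial submodule preserves the Hilbert series, so $\Hilb(F/U)=\Hilb(F/\gin_r(U))$, and the Grothendieck–Serre formula expresses $\sum_{j\geq 0}(-1)^jh^j(N)$ as $\Hilb(N,z)$ minus a rational function depending only on the Hilbert polynomial of $N$ (hence only on $\Hilb(N,z)$); once all terms with $j\geq 1$ agree, the $j=0$ terms must agree too.

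I expect the main obstacle to be establishing the partial crystallization identity (ii), $\gin_r(U)_{[n-1]}=\gin_{r-1}(g_n(U))$: this is the technical heart, a specialization statement for the non-standard weight order $\rev_r$ that must be proved (or quoted from a sufficiently general form of \cite{Gre98}) rather than merely transcribed from \eqref{215}. A secondary point requiring care is the bookkeeping of $H^0_\m$ and of the other finite-length terms when invoking Lemma~\ref{lemma ses Edepth}, whose hypotheses include positive depth; such corrections affect only the $\Ext^n$ and the degree-$0$ cohomology and so leave the comparisons in degrees $j\geq 1$ untouched, but they should be tracked honestly throughout.
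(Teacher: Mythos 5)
Your overall strategy --- induction on $r$ via a general hyperplane section, clean short exact sequences in local cohomology on the gin side, upper semicontinuity, and lifting E-depth back through the $\Ext$ modules --- is essentially the strategy of the paper's proof, which runs the argument of Theorem \ref{partiallySCM} in the E-depth setting. However, there are two genuine gaps, and they sit exactly where the paper's proof does something specific. The first is the one you flag yourself: your identity (ii), $\gin_r(U)_{[n-1]}=\gin_{r-1}(g_n(U))$, is never proved, and the suggested repair --- a ``sufficiently general form'' of Green's crystallization \cite{Gre98} --- is not available off the shelf, because $\rev_r$ is not a monomial order. The paper never establishes such a clean identity either; what it proves and uses instead is the chain of identities coming from ${\rm in}_{\rev_r}(U:_Fx_n)={\rm in}_{\rev_r}(U):_Fx_n$ (\cite[15.7]{Eis95}), namely ${\rm in}_{\rev_r}(U)\sat={\rm in}_{\rev_r}(U\sat)$ and ${\rm in}_{\rev_r}(U\sat)+x_nF={\rm in}_{\rev_r}(U\sat+x_nF)$, followed by the identification of the reduction modulo $x_n$ of the latter with ${\rm in}_{\rev_{r-1}}(\ov U)$ having the defining properties of $\gin_{r-1}(\ov U)$; for this last step it quotes \cite[Lemma 3.4]{CD21} and the proof of \cite[Theorem 3.6]{CD21}. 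Note that the identity the paper uses concerns the hyperplane section of the \emph{saturation} $U\sat$, not of $U$ itself; the two differ by finite-length terms, and that difference is not harmless, as explained next.

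The second gap is that the depth-zero bookkeeping cannot be deferred in the way you suggest. Lemma \ref{lemma ses Edepth} requires positive depth \emph{and} positive E-depth, and $\Edepth(F/\gin_r(U))\geq r>0$ does not imply $\depth(F/\gin_r(U))>0$ (for instance $k\oplus R$ has E-depth $n$ and depth $0$), so your equality \eqref{planA} is not justified as stated. Moreover, the finite-length corrections do contaminate the $j=1$ link of your chain: that link compares $h^{0}$ of the hyperplane section, which is affected by $H^0_\m$ and by $0:x$, with $(z-1)h^{1}$ of the module, so the claim that only degree-$0$ comparisons are touched is not correct. This is precisely why the paper replaces $U$ and $V={\rm in}_{\rev_r}(U)$ by their saturations before applying Lemma \ref{lemma ses Edepth}: E-depth is unchanged by Proposition \ref{prop Edepth}(1), $h^i$ is unchanged for $i\geq 1$, and the missing $h^0$ equality is recovered at the end from $h^0(F/U)=\Hilb(U\sat/U)=\Hilb(V\sat/V)=h^0(F/V)$, since passing to initial submodules preserves Hilbert series. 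Once these two points are repaired, your argument does go through; indeed your converse mechanism (equality collapses the chain, forcing $B^{(j)}=C^{(j)}=0$, hence $x$ regular on all $\Ext^i_R(F/U,R)$ with $i<n$, then a change-of-rings depth count) is a sound variant of the paper's, which instead proves exactness of the corresponding local cohomology sequence by contradiction with upper semicontinuity and then lifts E-depth using Proposition \ref{prop Edepth}(2).
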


\begin{proof}
After performing a sufficiently general change of coordinates, we may assume that $V={\rm in}_{\rev_r}(U)$ has the same properties as $\gin_{\rev_r}(U)$, and that $x_n,\ldots,x_{n-r+1}$ form a strictly filter regular sequence for $F/U$. By using again that ${\rm in}_{\rev_r}(U:_Fx_n) = {\rm in}_{\rev_r}(U):_F x_n$, and because strictly filter regular sequences are filter regular by Remark \ref{remark SF -> F}, we have that  $x_n,\ldots,x_{n-r+1}$ form a filter regular sequence for $F/V$. Since $V$ is $\ZZ \times \ZZ^r$-graded, it follows from Proposition \ref{Edepth graded} that $x_n,\ldots,x_{n-r+1}$ is a strictly filter regular sequence for $F/V$. By \cite[15.7]{Eis95} we also have that
\[
V\sat = {\rm in}_{\rev_r}(U):_F x_n^\infty = {\rm in}_{\rev_r}(U:_F x_n^\infty) = {\rm in}_{\rev_r}(U\sat),
\]
and 
\[
V\sat+x_nF = {\rm in}_{\rev_r}(U\sat) + x_nF = {\rm in}_{\rev_r}(U\sat+x_nF).
\]
Viewing $F/(U\sat+x_nF)$ as a quotient of a free $\ov{S}=k[x_1,\ldots,x_{n-1}]$-module $\ov{F}$ by a graded submodule $\ov{U}$, we see that $V\sat+x_nF$ can be identified with a submodule $\ov{V} \subseteq \ov{F}$, with $\ov{V} = {\rm in}_{\rev_{r-1}}(\ov{U})$ and $\ov{V}$ has the same properties of $\gin_{\rev_{t-1}}(\ov{U})$, see \cite[Lemma 3.4]{CD21} and the proof of \cite[Theorem 3.6]{CD21} for more details.

First assume that $\Edepth(F/U) \geq r$, and we prove equality for the Hilbert series of $H^i_\m(F/U)$ and $H^i_\m(F/V)$ for all $i \in \NN$ by induction on $r \geq 0$. The base case is trivial since ${\rm in}_{\rev_0}(U) = U$. By Proposition \ref{prop Edepth} we have that $\Edepth(F/U\sat) \geq r > 0$ and, accordingly,   $\Edepth(F/(U\sat + x_nF)) \geq r-1$. By induction, we have that $H^i_\m(F/(U\sat+x_nF)) = H^i_\m(F/(V\sat+x_nF))$. Since $x_n$ is strictly filter regular for both $F/U\sat$ and $F/V\sat$, and these have positive $\Edepth$, by Lemma \ref{lemma ses Edepth} we have graded short exact sequences for all $i>0$:
\begin{equation}
\label{eq1}
\xymatrix{
0 \ar[r] & H^{i-1}_\m(F/(U\sat+x_nF)) \ar[r] & H^i_\m(F/U\sat)(-1) \ar[r]^-{\cdot x_n} & H^i_\m(F/U\sat) \ar[r] & 0,
}
\end{equation}
and
\begin{equation}
\label{eq2}
\xymatrix{
0 \ar[r] & H^{i-1}_\m(F/(V\sat+x_nF)) \ar[r] & H^i_\m(F/V\sat)(-1) \ar[r]^-{\cdot x_n} & H^i_\m(F/V\sat) \ar[r] & 0.
}
\end{equation}
Since the first modules in both sequences have the same Hilbert series, a straightforward computation shows that $h^i(F/U\sat) = h^i(F/V\sat)$. Since $H^i_\m(F/U\sat) \cong H^i_\m(F/U)$ and $H^i_\m(F/V\sat) \cong H^i_\m(F/V)$ for all $i>0$, the equality between Hilbert series is proved for $i>0$. Finally, since $\Hilb(F/U) = \Hilb(F/V)$ and $\Hilb(F/U\sat) = \Hilb(F/V\sat)$, we have that
\[
h^0(F/U) = \Hilb(U\sat/U) = \Hilb(V\sat/V) = h^0(F/V).
\]
Conversely, assume that the local cohomology modules of $F/U$ and $F/V$ have the same Hilbert series. If $r=0$ there is nothing to show, otherwise it is enough to prove that $\Edepth(F/U\sat) \geq r$.

Since $x_n$ is a filter regular element for $F/V$, it is strictly filter regular for $F/V$, being the latter $\ZZ \times \ZZ^r$-graded, and hence it is filter regular for $F/V\sat$. Thus, the sequence (\ref{eq2}) is exact for $i>0$. On the other hand, suppose by way of contradiction that the sequence (\ref{eq1}) is not exact for some $i \in \ZZ$, and let $i$ be the smallest such integer, so that we still have an exact sequence
\[
\xymatrix{
0 \ar[r] & H^{i-1}_\m(F/(U\sat+x_nF)) \ar[r] & H^i_\m(F/U\sat)(-1) \ar[r]^-{\cdot x_n} & H^i_\m(F/U\sat).
}
\]
Counting dimensions in such a sequence, and comparing them to those obtained from (\ref{eq2}) we obtain that $h^{i-1}(F/(U\sat+x_nF))_j>h^{i-1}(F/(V\sat+x_nF))_j$. However, by upper semi-continuity we know that the reverse inequality always holds,  which gives a contradiction. Thus, the sequence (\ref{eq1}) is exact for all $i \in \NN$. 
By induction we have that $\Edepth(F/(U\sat+x_nF)) \geq r-1$. If $\Edepth(F/(U\sat+x_nF)) = n$, by Proposition \ref{prop Edepth} we see that $\Edepth(F/U) = n \geq r$, as desired. Otherwise, again by Proposition \ref{prop Edepth} we have that $\Edepth (F/U)  = \Edepth(F/(U\sat +x_nF)) +1 \geq r$.
\end{proof}

\noindent Recalling Remark \ref{remark revlex}, Theorem \ref{main thm Edepth} can be viewed as another extension of Theorem \ref{HS Intro}. 
\bibliographystyle{plain}
\bibliography{References}

\begin{thebibliography}{10}

\bibitem{ABG17}
Karim~A. Adiprasito, Anders Bj\"{o}rner, and Afshin Goodarzi.
\newblock Face numbers of sequentially {C}ohen-{M}acaulay complexes and {B}etti
  numbers of componentwise linear ideals.
\newblock {\em J. Eur. Math. Soc. (JEMS)}, 19(12):3851--3865, 2017.

\bibitem{AM15}
Josep \`Alvarez~Montaner.
\newblock Lyubeznik table of sequentially {C}ohen-{M}acaulay rings.
\newblock {\em Comm. Algebra}, 43(9):3695--3704, 2015.

\bibitem{ADH98}
Annetta Aramova, Emanuela De~Negri, and J{\"u}rgen Herzog.
\newblock Lexsegment ideals with linear resolution.
\newblock {\em Illinois J. Math.}, 42(3):509--523, 1998.

\bibitem{AH97}
Annetta Aramova and J{\"u}rgen Herzog.
\newblock $p$-borel principal ideals.
\newblock {\em Illinois J. Math}, 41(1):103--121, 1997.

\bibitem{AHH97}
Annetta Aramova, J{\"u}rgen Herzog, and Takayuki Hibi.
\newblock Gotzmann theorems for exterior algebras and combinatorics.
\newblock {\em J. Algebra}, 191(1):174--211, 1997.

\bibitem{AHH97-2}
Annetta Aramova, J{\"u}rgen Herzog, and Takayuki Hibi.
\newblock Weakly stable ideals.
\newblock {\em Osaka Journal of Mathematics}, 34(4):745--755, 1997.

\bibitem{AHH98}
Annetta Aramova, J{\"u}rgen Herzog, and Takayuki Hibi.
\newblock Squarefree lexsegment ideals.
\newblock {\em Mathematische Zeitschrift}, 228(2):353--378, 1998.

\bibitem{BG06}
Isabel Bermejo and Philippe Gimenez.
\newblock Saturation and {C}astelnuovo-{M}umford regularity.
\newblock {\em J. Algebra}, 303(2):592--617, 2006.

\bibitem{BSxx}
M.~P. Brodmann and R.~Y. Sharp.
\newblock {\em Local cohomology}, volume 136 of {\em Cambridge Studies in
  Advanced Mathematics}.
\newblock Cambridge University Press, Cambridge, second edition, 2013.
\newblock An algebraic introduction with geometric applications.

\bibitem{BH93}
Winfried Bruns and J\"{u}rgen Herzog.
\newblock {\em Cohen-{M}acaulay rings}, volume~39 of {\em Cambridge Studies in
  Advanced Mathematics}.
\newblock Cambridge University Press, Cambridge, 1993.

\bibitem{C04}
Giulio Caviglia.
\newblock {\em Koszul algebras, {C}astelnuovo-{M}umford regularity, and generic
  initial ideals}.
\newblock ProQuest LLC, Ann Arbor, MI, 2004.
\newblock Thesis (Ph.D.)--University of Kansas.

\bibitem{CD21}
Giulio Caviglia and Alessandro De~Stefani.
\newblock Decomposition of local cohomology tables of modules with large
  {E}-depth.
\newblock {\em J. Pure Appl. Algebra}, 225(6):Paper No. 106635, 23, 2021.

\bibitem{CS15}
Giulio Caviglia and Enrico Sbarra.
\newblock Zero-generic initial ideals.
\newblock {\em Manuscripta Math.}, 148(3-4):507--520, 2015.

\bibitem{CC07}
Nguyen~Tu Cuong and Doan~Trung Cuong.
\newblock On sequentially {C}ohen-{M}acaulay modules.
\newblock {\em Kodai Math. Journal}, 30:409--428, 2007.

\bibitem{CGT13}
Nguyen~Tu Cuong, Shiro Goto, and Hoang~Le Truong.
\newblock The equality {$I^2=\mathfrak{q}I$} in sequentially {C}ohen-{M}acaulay
  rings.
\newblock {\em J. Algebra}, 379:50--79, 2013.

\bibitem{CN03}
Nguyen~Tu Cuong and Le~Thanh Nhan.
\newblock Pseudo {C}ohen-{M}acaulay and pseudo generalized {C}ohen-{M}acaulay
  modules.
\newblock {\em J. Algebra}, 267(1):156--177, 2003.

\bibitem{CP17}
Nguyen~Tu Cuong and Hung~Pham Quy.
\newblock On the structure of finitely generated modules over quotients of
  {C}ohen-{M}acaulay local rings.
\newblock {\em Available at
  \href{https://arxiv.org/abs/1612.07638}{https://arxiv.org/abs/1612.07638}},
  2017.

\bibitem{CL09}
Nguyen~Tu Cuong and Hoang~Le Truong.
\newblock Parametric decomposition of powers of parameter ideals and
  sequentially {C}ohen-{M}acaulay modules.
\newblock {\em Proc. Amer. Math. Soc.}, 137(1):19--26, 2009.

\bibitem{DH98}
Emanuela De~Negri and J{\"u}rgen Herzog.
\newblock Completely lexsegment ideals.
\newblock {\em Proceedings of the American Mathematical Society},
  126(12):3467--3473, 1998.

\bibitem{D96}
Art~M. Duval.
\newblock Algebraic shifting and sequentially {C}ohen-{M}acaulay simplicial
  complexes.
\newblock {\em Electron. J. Combin.}, 3(1):Research Paper 21, approx. 14, 1996.

\bibitem{Eis95}
David Eisenbud.
\newblock {\em Commutative algebra}, volume 150 of {\em Graduate Texts in
  Mathematics}.
\newblock Springer-Verlag, New York, 1995.
\newblock With a view toward algebraic geometry.

\bibitem{Go16}
Afshin Goodarzi.
\newblock Dimension filtration, sequential {C}ohen-{M}acaulayness and a new
  polynomial invariant of graded algebras.
\newblock {\em J. Algebra}, 456:250--265, 2016.

\bibitem{GHS10}
Shiro Goto, Yosuke Horiuchi, and Hideto Sakurai.
\newblock Sequentially {C}ohen-{M}acaulayness versus parametric decomposition
  of powers of parameter ideals.
\newblock {\em J. Commut. Algebra}, 2(1):37--54, 2010.

\bibitem{Gre98}
Mark~L. Green.
\newblock Generic initial ideals.
\newblock In {\em Six lectures on commutative algebra ({B}ellaterra, 1996)},
  volume 166 of {\em Progr. Math.}, pages 119--186. Birkh\"{a}user, Basel,
  1998.

\bibitem{HTYZ11}
Hassan Haghighi, Naoki Terai, Siamak Yassemi, and Rahim Zaare-Nahandi.
\newblock Sequentially {$S_r$} simplicial complexes and sequentially {$S_2$}
  graphs.
\newblock {\em Proc. Amer. Math. Soc.}, 139(6):1993--2005, 2011.

\bibitem{HH97}
J{\"u}rgen Herzog and Takayuki Hibi.
\newblock Upper bounds for the number of facets of a simplicial complex.
\newblock {\em Proceedings of the American Mathematical Society},
  125(6):1579--1583, 1997.

\bibitem{HH99}
J\"{u}rgen Herzog and Takayuki Hibi.
\newblock Componentwise linear ideals.
\newblock {\em Nagoya Math. J.}, 153:141--153, 1999.

\bibitem{HH11}
J\"{u}rgen Herzog and Takayuki Hibi.
\newblock {\em Monomial ideals}, volume 260 of {\em Graduate Texts in
  Mathematics}.
\newblock Springer-Verlag London, Ltd., London, 2011.

\bibitem{HP06}
J\"{u}rgen Herzog and Dorin Popescu.
\newblock Finite filtrations of modules and shellable multicomplexes.
\newblock {\em Manuscripta Math.}, 121(3):385--410, 2006.

\bibitem{HRW99}
J{\"u}rgen Herzog, Victor Reiner, and Volkmar Welker.
\newblock Componentwise linear ideals and {G}olod rings.
\newblock {\em Michigan Mathematical Journal}, 46(2):211--223, 1999.

\bibitem{HS01}
J\"{u}rgen Herzog and Enrico Sbarra.
\newblock Sequentially {C}ohen-{M}acaulay modules and local cohomology.
\newblock In {\em Algebra, arithmetic and geometry, {P}art {I}, {II} ({M}umbai,
  2000)}, volume~16 of {\em Tata Inst. Fund. Res. Stud. Math.}, pages 327--340.
  Tata Inst. Fund. Res., Bombay, 2002.

\bibitem{MH06}
Satoshi Murai and Takayuki Hibi.
\newblock Gin and lex of certain monomial ideals.
\newblock {\em Math. Scand.}, 99(1):76--86, 2006.

\bibitem{R17}
Ahad Rahimi.
\newblock Sequentially {C}ohen-{M}acaulayness of bigraded modules.
\newblock {\em Rocky Mountain J. Math.}, 47(2):621--635, 2017.

\bibitem{Sb01}
Enrico Sbarra.
\newblock Upper bounds for local cohomology for rings with given {H}ilbert
  function.
\newblock {\em Comm. Algebra}, 29(12):5383--5409, 2001.

\bibitem{SS17}
Enrico Sbarra and Francesco Strazzanti.
\newblock A rigidity property of local cohomology modules.
\newblock {\em Proc. Amer. Math. Soc.}, 145(10):4099--4110, 2017.

\bibitem{Sch99}
Peter Schenzel.
\newblock On the dimension filtration and {C}ohen-{M}acaulay filtered modules.
\newblock In {\em Commutative algebra and algebraic geometry ({F}errara)},
  volume 206 of {\em Lecture Notes in Pure and Appl. Math.}, pages 245--264.
  Dekker, New York, 1999.

\bibitem{St96}
Richard~P. Stanley.
\newblock {\em Combinatorics and commutative algebra}, volume~41 of {\em
  Progress in Mathematics}.
\newblock Birkh\"{a}user Boston, Inc., Boston, MA, second edition, 1996.

\bibitem{TPDA17}
Naoki Taniguchi, Tran~Thi Phuong, Nguyen~Thi Dung, and Tran~Nguyen An.
\newblock Sequentially {C}ohen-{M}acaulay {R}ees algebras.
\newblock {\em J. Math. Soc. Japan}, 69(1):293--309, 2017.

\bibitem{TPDA18}
Naoki Taniguchi, Tran~Thi Phuong, Nguyen~Thi Dung, and Tran~Nguyen An.
\newblock Topics on sequentially {C}ohen-{M}acaulay modules.
\newblock {\em J. Commut. Algebra}, 10(2):295--304, 2018.

\bibitem{VTV08}
Adam Van~Tuyl and Rafael~H. Villarreal.
\newblock Shellable graphs and sequentially {C}ohen-{M}acaulay bipartite
  graphs.
\newblock {\em J. Combin. Theory Ser. A}, 115(5):799--814, 2008.

\end{thebibliography}

\end{document}